\def\sc{\scriptstyle}
\def\ssc{\scriptscriptstyle}
\def\QED{\hfill$\Box$}
\def\bs{\backslash}
\def\cl{\centerline}
\def\vs{\vspace*}
\def\Z{\mathbb{Z}}
\def\C{{\mathbb{C}}}
\def\Q{{\mathbb{Q}}}
\def\l{\lambda}
\def\WW{{\mathcal W}}
\def\L{{\cal L}}\def\L{{L}}
\def\W{{\cal W}}\def\W{{W}}
\def\ww{{\textbf{\textit{w}}}}
\def\dis{\displaystyle}
\def\D{{\Delta}}
\numberwithin{equation}{section}
\newtheorem{theo}{Theorem}[section]
\newtheorem{defi}[theo]{Definition}
\newtheorem{conv}[theo]{Convention}
\newtheorem{rema}[theo]{Remark}
\newtheorem{lemm}[theo]{Lemma}
\newtheorem{case}{Case}
\begin{document}
\begin{CJK*}{GBK}{song}

\begin{center}
{\large\bf Indecomposable modules of the intermediate series over  $\WW(a,b)$\,$^*$}\footnote {$^*\,$Supported by NSF grant 10825101 of
China\\\indent \ \ \ Corresponding author: Y Xu (xying@mail.ustc.edu.cn) }
\end{center}

\cl{{Yucai Su$^{\,\dag,\,\ddag}$, \  \ \ Ying Xu$^{\,\ddag}$,\ \ \ Xiaoqing Yue$^{\,\dag}$}}

\cl{\small $^{\dag\,}$Department of Mathematics, Tongji
University, Shanghai 200092, China}
\cl{\small
$^{\ddag\,}$Wu Wen-Tsun Key Laboratory of \vs{-2pt}Mathematics}
\cl{\small  University of Science and
Technology of China, Hefei 230026, China} \cl{\small E-mail:
ycsu@tongji.edu.cn,\ xying@mail.ustc.edu.cn, xiaoqingyue@tongji.edu.cn}\vs{5pt}

{\small
\parskip .005 truein
\baselineskip 3pt \lineskip 3pt \noindent{\bf Abstract.} For any complex parameters $a,b$, $\WW(a,b)$  is the Lie algebra with basis $\{\L_i,\W_i\,|\,i\in\Z\}$ and
relations $[\L_{i},\L_j]=(j-i)\L_{i+j}$,
$[\L_i,\W_j]=(a+j+bi)\W_{i+j}$, $[\W_i,\W_j]=0$. In this paper,
indecomposable modules of the intermediate series over  $\WW(a,b)$  are
classified. It is also proved that an irreducible Harish-Chandra $\WW(a,b)$-module
is either a highest/lowest weight module or a uniformly bounded module.
Furthermore, if $a\notin\Q$,
an irreducible weight $\WW(a,b)$-module is simply a $Vir$-module with trivial actions of $W_k$'s.
\vs{5pt}

\noindent{\bf Key words:} the algebra $\WW(a,b)$, the Virasoro algebra,  modules of the intermediate series

\noindent{\it Mathematics Subject Classification (2010):} 17B10,17B68.}
\parskip .001 truein\baselineskip 6pt \lineskip 6pt
\vs{10pt}


%
\section{Introduction}
For any fixed complex numbers $a,b$, there exists a Lie algebra with
basis $\{\L_i,\W_{i}\,|\,i\in\Z\}$ and  Lie brackets
\begin{eqnarray}\label{W-ab}
[\L_{i},\L_j]=(j-i)\L_{i+j},\ \ \ [\L_i,\W_j]=(a+j+bi)\W_{i+j},\ \ \ [\W_i,\W_j]=0.
\end{eqnarray}
This Lie algebra, known as the {\it  algebra $\WW(a,b)$}, is in
fact the semi-direct product $\mathrm{Vir}\ltimes A'_{a,b}$ of $Vir$ by $A'_{a.b}$, where $\mathrm{Vir}={\rm span}\{\L_i\,|\,i\in\Z\}$
is the well-known centerless Virasoro algebra (or
Witt algebra), and $A'_{a,b}$ is a module of the intermediate series defined in \eqref{A-a-b} (which is regarded as an abelian Lie algebra).
Thus, from the definition, one immediately sees that this Lie algebra is closely related to
 the Virasoro algebra and its modules. Due to their extreme importance in mathematics and physics,
representations of the Virasoro algebra (or higher rank Virasoro algebras, e.g., \cite{S1,S2}) have been
 widely studied in the mathematical and physical literatures. For instance,
 a classification of modules of the immediate series over the  Virasoro algebra
 was given in \cite{KS}, unitarizable modules and uniformly bounded modules with composition factors at most two
 were considered respectively in \cite{CP,MP}, and
a classification of Harish-Chandra modules over the Virasoro algebra was presented in \cite{O} (see also, \cite{CP,S0}).

In order to investigate a classification of vertex operator algebras generated by weight $2$ vectors, the $W$-algebra $W(2, 2)$, which is a special case of $\WW(a,b)$ with $a=0,\,b=-1$, was first
introduced and studied in \cite{ZD}. Later on,
a classification of Harish-Chandra modules over $\W(2, 2)$ was considered in \cite{LiuZ}.
Furthermore, the well-known twisted Heisenbeg-Virasoro algebra (without some central elements), whose irreducible Harish-Chandra modules were classified in \cite{LuZ}, is also a special case of $\WW(a,b)$ with $a=b=0$. Thus,
 $\WW(a,b)$ generalizes many meaningful algebras, and it is very natural and desirable to consider representations of
 $\WW(a,b)$.
%
%
%
\begin{defi}\label{defi-1}\rm
A $\WW(a,b)$-module $V$ is calle\vspace*{-3pt}d
\begin{itemize}\parskip-3pt
\item a {\it weight module} if it admits a weight space decomposition
$V=\oplus_{\lambda\in\C} V_{\lambda}$ ($\lambda$ is called a {\it weight} of $V$ in case $V_{\lambda}\neq0$), where \begin{equation}\label{Wei}
V_{\lambda}=\{v_{\lambda}\,|\,L_0 v_{\lambda}=\lambda v_{\lambda}\}\mbox{ \ for \ }\l\in\C;\end{equation}
\item a {\it highest} (resp., {\it lowest} ) {\it weight module} with highest (resp., lowest) weight $\Lambda$ if
there exists $\Lambda\in\C$ such that $V$ is generated by $V_{\Lambda}$, and $V_\lambda=0$ for all $\l\in\C$ with $\l-\Lambda\in\Z_+\bs\{0\}$ (resp., $\Lambda-\l\in\Z_+\bs\{0\}$);
\item
a {\it Harish-Chandra module} if it is a weight module with dim$\,V_{\lambda}<\infty$  for
all $\lambda\in\C$;

\item a {\it uniformly bounded module} {\rm if it is a Harish-Chandra module such that there exists
some $N > 0$ with dim$\,V_{\lambda} \leq N$ for all $\lambda\in\C$;}

\item a {\it module of the intermediate series} {\rm if $V$ is a uniformly bounded module
such that dim$\,V_{\lambda}\leq 1$ for all $\lambda\in\C$}.
\end{itemize}
\end{defi}

The aim of the present paper is to give a classification of (not only irreducible but also) indecomposable modules of the intermediate series over $\WW(a,b)$ and give some description of irreducible weight $\WW(a,b)$-modules.
 Our next goal is to give a classification of all irreducible Harish-Chandra $\WW(a,b)$-modules in some due time. The main techniques used in this paper are developed from that
used in  determining representations of higher rank Virasoro algebras
 \cite{S1,S2}. Similar techniques have been also used in determining representations of
  Block type Lie algebras and  Schr\"{o}dinger-Virasoro algebras \cite{YS,LS}.
We would like to emphasis here that,  as we shall see later on, due to the crucial
fact that the parameter $a$ is not necessarily an integer, modules of the intermediate series over $\WW(a,b)$ may have a rather complicated structure, which is very different from that of the Virasoro algebra, whose indecomposable modules of the intermediate series are
only slightly different than irreducible modules of the intermediate series.

Note that some special cases of $\WW(a,b)$  naturally appear as subalgebras of  many interesting infinite-dimensional
 graded Lie (super)algebras, e.g., the $W$-infinity algebra $\WW_{1+\infty},$ some Block type Lie algebras, $N=2$
 super-Virasoro algebras, Schr\"{o}dinger-Virasoro algebras
\cite{LS,LSZ,S3,YS}, 
etc. Just as   results on  representations of the Virasoro algebra are widely used in classifications of  representations of  Lie (super)algebras which contain the Virasoro algebra as a subalgebra, one can expect that  results on representations of $\WW(a,b)$ may be used in
that of  Lie (super)algebras which contain some $\WW(a,b)$ as a subalgebra
(this is also one of our motivations in presenting the results below).
However, in order to be able to apply our results to  representations of
Lie (super)algebras  which contain $\WW(a,b)$ as a subalgebra, it seems to be necessary to   have a classification of (not only irreducible but also)
indecomposable $\WW(a,b)$-modules of the intermediate series. This is why we consider indecomposable rather than irreducible modules here.

Our first main result in this paper is to give the following classification of
indecomposable $\WW(a,b)$-modules of the intermediate series. Here and below, we always assume $b\ne0$ since $\WW(a,0)\cong\WW(a,1)$ (cf.~\eqref{Imospp}) if $a\notin\Z$, and in case $a\in\Z$, $\WW(a,0)\cong\WW(0,0)$ is simply the
twisted Heisenbeg-Virasoro algebra, whose indecomposable modules of the intermediate series were considered in
\cite{LuZ}.

\begin{theo}\label{theo-2-1}Let $V$ be an indecomposable $\WW(a,b)$-module of the intermediate series. Then we have one of the following:
\begin{itemize}\parskip-3pt
\item[\rm(1)]$V$\,is\,a\,$Vir$-module\,of the\,intermediate series\,$($cf.~\eqref{Vir---}--\eqref{B-a}$)$\,with trivial actions\,of $W_k$'s.
  \item[\rm(2)] $a\notin\Z,\,b\neq0,1$, and $V$ is a sub-quotient \vs{-5pt}of
\begin{equation}\label{Th-eq1}A(\l,\mu),\,\ B(\l,\mu),\,\ A_1(\gamma),\,\ A_2(\gamma),\,\ A_3(\gamma),\,\ B_1(\gamma),\,\ B_2(\gamma)\,{\it or}\ B_3(\gamma)\vs{-5pt},\,\
\phantom{\overline{A}(\l,\mu),}\end{equation} defined in \eqref{W-A-a-b}--\eqref{W-B-r-a-b} for some $\l,\mu\in\C,\,\gamma\in\C\cup\{\infty\}$ $($cf.~Convention $\ref{con1})$.
 \item[\rm(3)] $a\notin\Z,\,b=1$, and  $V$ is a sub-quotient of a module in \eqref{Th-eq1} or a sub-quotient \vs{-5pt}of
\begin{equation}\label{Th-eq1-tilde}
\widetilde{A}(\l,\mu),\,\ \widetilde{B}(\l,\mu),\,\ \widetilde{A}_1(\gamma),\,\ \widetilde{A}_2(\gamma),\,\ \widetilde{A}_3(\gamma),\,\ \widetilde{B}_1(\gamma),\,\ \widetilde{B}_2(\gamma),\,\ \widetilde{B}_3(\gamma)\,{\it or}\ \overline{A}(\l,\mu)\vs{-5pt},\end{equation} defined in \eqref{t-A-lambda-mu}--\eqref{t-B-2-gamm} and \eqref{W-qA-a-b1} for some $\l,\mu\in\C,\,\gamma\in\C\cup\{\infty\}$. Furthermore, $\overline{A}(\l,\mu)$ can occur only when $a\in\Q$.

\item[\rm(4)] $a\in\Z,\,b=1$, and  $V$ is a quotient of $\,\overline{A}(\l,\,\mu,\,c)$ for some $\l,\mu,c\in\C$, which has basis $\{v_m\,|\,m\in\Z\}$ with actions defined \vs{-5pt}by
\begin{eqnarray}\label{ASSSSS}\!\!\!\!\!\!\!\!\!\!\!\!\!\!\!\!\!\!\!\!
&\overline{A}(\l,\,\mu,\,c):&\ \ L_kv_m=(\l+m+\mu k)v_{k+m},\ \ W_kv_m=\delta_{k+a,\,0}c{\sc\,} v_{m+k+a}.
\end{eqnarray}
\end{itemize}
\end{theo}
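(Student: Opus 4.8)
The plan is to exploit the fact that $L_0$ acts semisimply with one-dimensional weight spaces, and to track how the two kinds of generators move weights: since $[L_0,L_k]=kL_k$ and $[L_0,W_k]=(a+k)W_k$, the operator $L_k$ shifts an $L_0$-eigenvalue by $k\in\Z$ while $W_k$ shifts it by $a+k$. Consequently $\mathrm{Supp}(V)\subseteq\C$ is a union of cosets of the form $\lambda+na+\Z$ ($n\in\Z$), with the $L_k$'s preserving each coset and the $W_k$'s carrying the coset indexed by $n$ into the one indexed by $n+1$. First I would dispose of the case $a\in\Z$: then every such coset equals $\lambda+\Z$, so $V$ lives on a single coset, its restriction to $\mathrm{Vir}$ is a module of the intermediate series, and the problem reduces to finding the extra scalars contributed by the $W_k$'s; this is where parts (1) and (4) arise. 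For $a\notin\Z$ the cosets $\lambda+na+\Z$ are genuinely distinct, which is the source of the richer structure advertised in the introduction.

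On a single coset, write the basis as $v^{(n)}_m$ (weight $\lambda+na+m$) and put $L_kv^{(n)}_m=\phi_n(k,m)v^{(n)}_{k+m}$, $W_kv^{(n)}_m=f_n(k,m)v^{(n+1)}_{k+m}$. Restricting to $\mathrm{Vir}$ and invoking the known classification of $\mathrm{Vir}$-modules of the intermediate series (the modules in \eqref{Vir---}--\eqref{B-a}) lets me normalize each $\phi_n$ to the form $\phi_n(k,m)=\lambda_n+m+\mu_nk$, allowing the degenerate variants with a missing weight when the relevant $\mathrm{Vir}$-parameter equals $0$ or $1$. The relations $[L_i,W_j]=(a+j+bi)W_{i+j}$ and $[W_i,W_j]=0$ then become the two functional equations
\[
f_n(j,m)\,\phi_{n+1}(i,j+m)-\phi_n(i,m)\,f_n(j,i+m)=(a+j+bi)\,f_n(i+j,m),\quad
f_n(j,m)f_{n+1}(i,j+m)=f_n(i,m)f_{n+1}(j,i+m),
\]
for the unknown coefficient functions $f_n$.

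The conceptual key is to read the first equation as saying that $w_j\mapsto\bigl(v^{(n)}_m\mapsto f_n(j,m)v^{(n+1)}_{j+m}\bigr)$ is a homomorphism of $\mathrm{Vir}$-modules from $A'_{a,b}$ (cf.~\eqref{A-a-b}, whose action is $L_i\cdot w_j=(a+j+bi)w_{i+j}$) into $\mathrm{Hom}_\C(U_n,U_{n+1})$ with its natural $\mathrm{Vir}$-module structure, where $U_n=\bigoplus_m\C v^{(n)}_m$. Thus the admissible single-step $W$-actions are governed by $\mathrm{Hom}_{\mathrm{Vir}}\!\bigl(A'_{a,b},\mathrm{Hom}_\C(U_n,U_{n+1})\bigr)$, a space I would compute explicitly; its basis elements produce the families $A(\l,\mu)$, $B(\l,\mu)$, $A_i(\gamma)$, $B_i(\gamma)$ and, in the degenerate ranges, their tilde- and bar-analogues. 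In particular, when $a\in\Z$ and $b=1$ the module $A'_{a,1}$ has a one-dimensional trivial quotient supported at $w_{-a}$, and the only equivariant map into $\mathrm{Hom}_\C(U_n,U_n)$ is the one sending that quotient to a scalar multiple of the identity; this forces $W_{-a}$ to act as $c\cdot\mathrm{id}$ and all other $W_k$ to act as zero, which is precisely $\overline A(\l,\mu,c)$ of part (4).

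It remains to control the global shape of $V$, and here I expect the real work to lie. The second functional equation couples consecutive steps $f_n$ and $f_{n+1}$, and I would use it together with indecomposability to show that for $a\notin\Z$ no genuinely new indecomposables arise beyond chains of length two: a nonzero composite $W_iW_j$ forces a splitting unless the support is confined to two adjacent cosets, so that $V$ becomes a sub-quotient of one of the two-coset modules in \eqref{Th-eq1} (or \eqref{Th-eq1-tilde} when $b=1$). The finest points—separating $b\neq0,1$ from $b=1$, isolating which sub-quotients are actually indecomposable, and checking that $\overline A(\l,\mu)$ can be realized only when the two cosets are commensurable, i.e.\ $a\in\Q$—all reduce to a careful analysis of the solutions $f_n$ in the degenerate parameter ranges. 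The main obstacle is therefore not any single identity but the bookkeeping required to solve the coupled equations simultaneously across all degenerate cases while preserving the indecomposability hypothesis.
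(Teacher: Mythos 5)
Your reduction of the problem to computing equivariant maps is sound in spirit (it is a repackaging of the paper's functional equations \eqref{equa-w0} and \eqref{equa-ww}), and your treatment of $a\in\Z$, $b=1$ points in the right direction. But your argument for the global structure when $a\notin\Z$ contains a genuine error: you claim that ``a nonzero composite $W_iW_j$ forces a splitting unless the support is confined to two adjacent cosets,'' so that every indecomposable is a sub-quotient of a \emph{two-coset} module. This is contradicted by the theorem you are proving, and by the modules themselves: $A(\l,\mu)$ in \eqref{W-A-a-b} is indecomposable, is supported on \emph{infinitely many} cosets $j\in\Z$, and has $W_iW_jv^n_m=v^{n+2}_{i+j+m}\ne0$ everywhere; the same is true of $A_1(\gamma)$, $A_2(\gamma)$, $B_1(\gamma)$, $B_2(\gamma)$, $\widetilde A(\l,\mu)$, and $\overline A(\l,\mu)$ (the last being a cyclic wrap-around of $p$ cosets when $a=q/p$, not a two-coset object). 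Only $B(\l,\mu)$, $A_3(\gamma)$, $B_3(\gamma)$ have length two. So the relation $[W_i,W_j]=0$ cannot bound the length; an argument built on that claim fails at the first infinite-length example.

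What the commuting relation actually buys — and this is the paper's Lemma \ref{lemm-mu-j-mu-j+1} — is rigidity, not shortness: if the chain has length $\ge3$, then the compatibility \eqref{equa-ww} between consecutive transition functions forces the generic pattern $\mu_{j+1}=\mu_j+b$, $\ww^j_{k,m}=1$ (or its $b=1$ analogue $\ww^j_{k,m}=\frac1{a+k}$), i.e.\ long indecomposables must be of type $A(\l,\mu)$ or $\widetilde A(\l,\mu)$, while the exotic behaviors ($B$-type links, $\gamma$-deformations) can occur only at a single site or at the ends of the chain. Your proposal would need to replace the splitting claim with this rigidity statement. Separately, note that the entire quantitative content — solving for the single-step functions $f_n$, which you defer as ``a space I would compute explicitly'' — is where the bulk of the paper's proof lives: the determinant identity \eqref{eqn-D} extracted from the Jacobi-type relation, the case analysis of Lemma \ref{lemm-mu1}, and the separate analysis (Section 5 of the paper) when some $V^j$ is a degenerate $Vir$-module, which is precisely what produces the $\gamma$-families. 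As written, the proposal both omits that computation and rests its structural step on a false dichotomy.
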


The second main result is the following description of irreducible weight $\WW(a,b)$-modules.
\begin{theo}\label{theo-irr}\begin{itemize}\parskip-3pt
   \item [\rm(1)]Any irreducible Harish-Chandra $\WW(a,b)$-module
is either a uniformly bounded module or a highest/lowest weight module.
\item[\rm(2)]Let $V$ be an irreducible $\WW(a,b)$-module of the intermediate series. Then we have one of the following:\\ {\rm(i)}
$V$ is a $Vir$-module with trivial actions of $W_k$'s;\\
{\rm(ii)} $a\!\in\!\Q\!\setminus\!\Z,\,b\!=\!1$, and $V$ is a quotient of $\,\overline{A}(\l,\mu)$
defined in \eqref{W-qA-a-b1}  for some $\l,\mu\!\in\!\C$; \\
{\rm(iii)}\,$a\!\in\!\Z$,\,and $V\,$is\,a\,quotient\,of $\overline{A}(\l,{\ssc\!}\mu,{\ssc\!}c)$\,defined\,in\,\eqref{ASSSSS}\,for some $\l,\mu,c\!\in\!\C$\,with $c\!\ne\!0$.

\item[\rm(3)] If $a\notin\Q$, then
an irreducible weight $\WW(a,b)$-module $($not necessarily a Harish-Chandra module$)$ is simply a $Vir$-module with trivial action of $W_k$'s.
\end{itemize}
\end{theo}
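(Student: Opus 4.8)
The plan is to handle the three parts in the order (3), (1), (2), since part (3) isolates in cleanest form the grading mechanism that also governs the rest. For part (3), let $V$ be irreducible with $a\notin\Q$, and set $I=\mathrm{span}\{W_i\mid i\in\Z\}$, an abelian ideal of $\WW(a,b)$; then $IV=\sum_iW_iV$ is a submodule, so by irreducibility $IV=0$ or $IV=V$, and it suffices to exclude $IV=V$. Fix $0\ne v\in V_{\l_0}$. Since $v$ generates $V$ and every generator shifts weight by an element of $\Z\cup(a+\Z)$, we get $\mathrm{supp}(V)\subseteq\l_0+(\Z+\Z a)$. Because $a\notin\Q$, the numbers $1$ and $a$ are $\Q$-linearly independent, so each weight is uniquely $\l_0+m+na$ and $V=\bigoplus_{n\in\Z}V^{[n]}$ with $V^{[n]}=\bigoplus_m V_{\l_0+m+na}$ is a genuine $\Z$-grading in which $L_i$ preserves the degree while $W_i$ raises it by one. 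Hence $\bigoplus_{n\ge n_0}V^{[n]}$ is a submodule for every $n_0$; nothing in the algebra lowers the degree, so this decreasing chain of submodules, each $0$ or $V$, forces $V$ to be concentrated in a single degree $V^{[n_{\min}]}$ (if no minimal degree existed, or if two degrees were nonzero, one quickly deduces $V=0$). Then $W_iV\subseteq V^{[n_{\min}+1]}=0$, contradicting $IV=V$. Note this uses neither finite-dimensional weight spaces nor any Virasoro classification, which is precisely why (3) holds for all irreducible weight modules.

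For part (1), let $V$ be irreducible Harish-Chandra. If $a\notin\Q$, part (3) reduces $V$ to an irreducible Harish-Chandra $\mathrm{Vir}$-module and the dichotomy follows from the classification of Virasoro Harish-Chandra modules (cf.~\cite{O}); so assume $a\in\Q$, whence $\mathrm{supp}(V)\subseteq\l_0+\tfrac1q\Z$ lies in a single arithmetic progression. If $\mathrm{supp}(V)$ is bounded above, take $\mu$ maximal: irreducibility shows $V$ is generated by $V_\mu$ and has no higher weights, so $V$ is a highest weight module, and symmetrically for bounded below. Thus assume $\mathrm{supp}(V)$ is unbounded in both directions; I would restrict to $\mathrm{Vir}$ and apply \cite{O}, so that every composition factor of $V|_{\mathrm{Vir}}$ is highest weight, lowest weight, or of the intermediate series. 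If they are all of the intermediate series, each contributes at most $1$ to every weight space in its $\Z$-coset, so Harish-Chandra-ness forces finitely many of them (else some weight acquires infinite multiplicity) and $\dim V_\l$ is uniformly bounded by their number.

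The hard part is the remaining possibility in (1): that $V|_{\mathrm{Vir}}$ has a highest- or lowest-weight constituent. A vector singular only for $\mathrm{Vir}$ need not be singular for $\WW(a,b)$, because the modes $W_n$ can move it to strictly higher weights via $[L_m,W_n]=(a+n+bm)W_{m+n}$. I would use the triple $\{L_{-1},L_0,L_1\}\cong\mathfrak{sl}_2$ together with this relation to show that such a one-sided $\mathrm{Vir}$-string propagates into a genuine one-sided bound on $\mathrm{supp}(V)$, contradicting two-sided unboundedness and thereby reducing to the bounded case. Concretely, following the multiplicity-estimate technique of \cite{S1,S2}, the goal is to prove that any nonuniform growth of $\dim V_\l$ must produce a vector killed by all positive (or all negative) modes of both $\L$ and $\W$; here one must track how the abelian ideal $I$ enlarges the Virasoro weight strings, verifying that the extra $\W$-contributions to each weight space stay controlled precisely because $I$ is abelian and acts by the single shift pattern $a+n$. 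This estimate is where the real work lies.

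Finally, part (2) is largely a reading-off from Theorem~\ref{theo-2-1}. An irreducible module of the intermediate series is in particular indecomposable, hence occurs in the lists of that theorem; one discards the sub-quotients with trivial $\W$-action (these give the $\mathrm{Vir}$-modules of case (i)) and identifies the irreducible members of the remaining families. Part (3) already shows that a nontrivial $\W$-action forces $a\in\Q$, and a direct inspection of the defining actions pins the irreducible ones with nonzero $W_k$ down to the quotients of $\overline{A}(\l,\mu)$ for $a\in\Q\setminus\Z,\ b=1$ (case (ii)) and the quotients of $\overline{A}(\l,\mu,c)$ with $c\ne0$ for $a\in\Z$ (case (iii)), the condition $c\ne0$ being exactly what makes the $\W$-action nontrivial.
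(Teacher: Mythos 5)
Your parts (3) and (2) are correct and are essentially the paper's own arguments: for (3) the paper exhibits the proper nonzero submodule $V'=\sum_{j\ge1,\,m\in\Z}V_{\l+ja+m}$, which is exactly your observation that $W_i$ raises the $a$-degree while nothing in the algebra lowers it; and (2) is read off from Theorem \ref{theo-2-1} just as you describe, since in every family other than $\overline{A}(\l,\mu)$ and $\overline{A}(\l,\mu,c)$ the operators $W_k$ strictly increase the index $j$, so each irreducible sub-quotient sits inside a single $V^j$ and carries trivial $W$-action.

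Part (1), however, has a genuine gap, and you have located it yourself: the case where $\mathrm{supp}(V)$ is unbounded in both directions and the restriction $V|_{Vir}$ has a highest- or lowest-weight constituent is left as a stated goal (``this estimate is where the real work lies'') rather than proved. There is also a secondary defect on the way there: an arbitrary Harish-Chandra $Vir$-module need not have finite length, so ``apply \cite{O} to the composition factors of $V|_{Vir}$'' itself needs justification. The paper bypasses restriction to $Vir$ entirely. Its key step is the claim that the map $\psi_m$ of \eqref{Inje} is injective for all $m\ne-1,0$: if some nonzero $v_0\in V_\l$ were killed simultaneously by $L_m,L_{m+1},W_m,W_{m+1}$ (say $m>0$), then every $L_n$ with $n\gg0$ lies in the subalgebra generated by $L_m,L_{m+1}$, and every $W_n$ with $n\gg0$ equals $\frac{1}{a+(1-b)m+bn}[L_{n-m},W_m]$ or $\frac{1}{a+(1-b)(m+1)+bn}[L_{n-m-1},W_{m+1}]$ with nonzero denominator, so $L_nv_0=W_nv_0=0$ for all $n$ greater than some $K$; the argument of \cite[Proposition 2.1]{S3} then produces a highest weight vector, contradicting the standing assumption that $V$ has no highest/lowest weight. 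With injectivity in hand, uniform boundedness is immediate: setting $N=\sum_{i=0}^{p-1}\sum_{j=-1}^{1}\dim V_{\l_0+i/p+j}<\infty$, the injectivity of $\psi_{-m}$ gives $\dim V_{\l_0+i/p+m}\le N$ for every $m$. This four-operator kernel argument (the concrete form of the multiplicity-estimate technique of \cite{S1,S2} you allude to) is precisely the missing estimate; without it, or an equivalent, your proof of (1) is incomplete.
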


The paper is arranged as follows. After presenting some notations, definitions and  preliminary results in Section 2, we first list all possible maximal
indecomposable $\WW(a,b)$-modules of the intermediate series with condition $a\notin\Q$ in Section 3.
Then we give a proof of Theorem \ref{theo-2-1} in Sections 4 and 5 for the case $a\notin\Q$, and in Section 6 for the cases $a\in\Q\bs\Z$ and $a\in\Z$ respectively.
Finally, we give a proof of Theorem \ref{theo-irr} in Section 7.

Throughout the paper, we denote by $\Z,\,\Z_+,\,\Q$ and $\C$ the sets of integers,
nonnegative integers, rational numbers and complex numbers respectively.
%
\section{Preliminaries}From \eqref{W-ab}, one immediately sees that \begin{equation}\label{Vir---}
Vir={\rm span}\{\L_i\,|\,i\in\Z\},\end{equation}
is the well-known centerless
{\it Virasoro algebra}. An indecomposable $Vir$-module of the intermediate series \cite{KS} must be one of $A'_{\l,\mu},
A'(\gamma),B'(\gamma),\, \l, \mu\in\C,\,\gamma\in\C\cup\{\infty\}$ (we add the prime to the notations in order to avoid the confusion with
$\WW(a,b)$-modules to be introduced later),
%
%
%
%
or one of their quotient submodules, where
$A'_{\l,\mu}, A'(\gamma), B'(\gamma)$ all have a basis $\{v_n\,|\,n\in\Z\}$ such
that

\begin{eqnarray}
\label{A-a-b}A'_{\l,\mu}:&\!\!\!\!\!\!&L_iv_j=(\l+j+\mu i)v_{i+j},\\
\label{A-a}A'(\gamma):&\!\!\!\!\!\!&L_iv_0=i(i+\gamma)v_i,\ \ \ \ \ \ L_iv_j=(i+j)v_{i+j}\mbox{ for }j\ne0,\\
\label{B-a}B'(\gamma):&\!\!\!\!\!\!&L_iv_{-i}=-i(i+\gamma)v_0,\ \ L_iv_j=jv_{i+j}\mbox{ for }j\ne-i.
\end{eqnarray}
Here and below, for convenience, we use the following:
\begin{conv}\label{con1}\rm
If $\gamma=\infty$, we always regard $i+\gamma$ as $1$.
Thus in fact $A'(\infty)=A'_{0,1}$ and $B'(\infty)=A'_{0,0}$.
\end{conv}

Denote by
$A''_{0,0}$
%
%
%
%
the unique nontrivial irreducible quotient of $A'_{0,0}$, and $T$  the $1$-dimensional trivial module.

Now let $V$ be an indecomposable $\WW(a,b)$-module of the intermediate series, namely, the weight space $V_\l$ defined by \eqref{Wei} satisfies
${\rm dim}\,V_\l\le1$ for all $\l\in\C$.
Since $\{\W_i\mid i\in\Z\}$ spans an Abelian ideal of $\WW(a,b)$, if  $\W_i V=0$ for all $i\in\Z,$ then $V$ is simply a $Vir$-module, whose structure is well-known. So, we shall always suppose
\begin{equation}\label{weight-1}
\W_iV\ne0\mbox{ \ for some \ }i\in\Z.
\end{equation}
We denote $P(V)=\{\mu\in\C\,|\,V_\mu\ne0\}$, called the {\it set of weights of $V$}.
Fix a weight $\l\in \C$, so that $V_{\l}\ne0$. Using $[\L_0,\L_i]=i\L_i$ and $[\L_0,\W_j]=(a+j)\W_j$, it is easy to obtain that $\L_{j} V_{\l}\in V_{\l+j}$ and $\W_{j} V_{\l}\in V_{\l+a+j}.$
 Since $V$ is indecomposable,  for any two nonzero weight vectors $u,v\in V$ with different weights, there exist weight vectors $v_0:=u,\,v_1,...,v_k:=v$ and basis elements
$x_1,...,x_k\in\{L_i,W_i\,|\,i\in\Z\}$ and nonzero numbers $a_1,...,a_k\in\C$, such that either $x_iv_{i-1}=a_iv_i$ or $a_iv_{i-1}=x_iv_i$, $i=1,...,k$.
This implies
\begin{equation}\label{wei-set}P(V)
\subset\{\l+ja+m\,|\,j,m\in\Z\}.
\end{equation}
In case $a\notin\Q$, we have
\begin{equation}\label{equa-weight}
V=\mbox{\footnotesize$\dis\bigoplus_{-\infty<j<\infty}$} V^j,\
\ \ V^j=\mbox{\footnotesize$\dis\bigoplus_{m\in\Z}$}V_m^j,\ \ \ V^j_m=\{v\in V\,|\,L_0v=(\l+aj+m)v\},\end{equation}
and ${\rm dim\,}V^j_m\le1$.

Let $N_1$ (resp., $N_2$) be the smallest (resp., largest)
integer such that $V^{N_1+1}, V^{N_2-1}\ne0$. Then $N_1< N_2-1$, and $V=\oplus_{N_1<j<N_2} V^j$, such that
each $V^j$ is a nonzero module of the intermediate series over $Vir$ and $W_i V^j\subset V^{j+1}$ for $N_1< j<N_2$.
Note that $N_1$ (resp., $N_2$) can be $-\infty$ (resp., $\infty$).
In case $N_1<-\infty$, we can  suppose $N_1=-1$ if necessary. Define the {\it length} $\ell(V)$ of $V$ to be $N_2-N_1+1$.

In case $a\in{\Q\setminus 0}$, by shifting index of $W_m$ if necessary,
we can suppose $a=\frac{q}{p}$ with $1\le q<p$ and $p,q$ are coprime.
We can also suppose $V=\oplus_{j=0}^N V^j$ with $N<p$ (i.e., $N_1=-1,N_2=N+1\le p$).
We also denote $V^j=0$ if $j<0$ or $j>N$.

\def\M{{\textit{\textbf{V}}}}\def\mm{{\textit{\textbf{v}}}}Let $V$ be a $\WW(a,b)$-module of the intermediate series with decomposition
\eqref{equa-weight}. Take the subspace $\M$ of $V^*$  with
decomposition $\M=\oplus_{-\infty<j<\infty}\M^j$ such that $\M^j=\oplus_{m\in\Z}\M_m^j$ and $\M_m^j=(V_m^j)^*$ (where ``$\,^*\,$'' stands for the ``dual space''). Then $\M$ is a $\WW(a,b)$-module of the intermediate series by defining for $x\in\WW(a,b),\,\mm\in\M$,
\begin{equation}\label{dual-mod}
x \mm(v)=-\mm(xv),\ \forall\,v\in V.
\end{equation}
We simply call $\M$ the {\it dual $\WW(a,b)$-module} of $V$.

In the following sections, we shall determine indecomposable $\WW(a,b)$-modules according to the cases
$a\notin\Q,$ $a\in\Q\backslash\Z$ and $a\in\Z.$
%
\section{Modules of the intermediate series for  case $a\notin\Q$}
In this section, we consider the case $a\notin\Q$. In this case, we can assume $\ell(V)\ge2$, otherwise $V$ is simply
a $Vir$-module. We can also suppose $b\ne0$ because there exist an algebra isomorphism $\eta$: $\WW(a,0)\cong\WW(a,1)$  defined by
\begin{equation}\label{Imospp}
\eta(L_k^{(0)})=L_k^{(1)},\ \ \eta(W_k^{(0)})=(a+k)W_k^{(1)},\ \forall\,k\in\Z,\end{equation}
where the basis elements of $\WW(a,i)$ are denoted by $L_k^{(i)},W_k^{(i)},\,i=0,1$.

We first list all possible maximal indecomposable $\WW(a,b)$-modules of the intermediate series
which we obtained using complex computations (an indecomposable module is {\it maximal} if it cannot be strictly embedded in another indecomposable module).
For any $\l,\mu\in\C$, it is easy to check that there exist indecomposable $\WW(a,b)$ modules (which will be proved to be maximal indecomposable) $A(\l,\mu)$, $B(\l,\mu)$
of the intermediate series with bases \linebreak $\{v_m^j\,|\,j,m\in\Z\}$, $\{v_m^j\,|\,m\in\Z,j=0,1\}$ respectively and actions:
\begin{eqnarray}
\label{W-A-a-b}\!\!\!\!\!\!\!\!\!\!\!\!\!\!A(\l,\mu)&\!\!\!\!\!\!\!\!&={\rm span}\{v_m^j\,|\,j,m\in\Z\}: \nonumber\\&\!\!\!\!\!\!\!\!&
L_kv^j_m=(\l+aj+m+(\mu+jb)k)v^j_{k+m},\ \ W_kv^j_m=v^{j+1}_{k+m}, \ \forall\,j,k,m\in\Z;\\[4pt]
\!\!\!\!\!\!\!\!\!\!\!\!\!\!B(\l,\mu)&\!\!\!\!\!\!\!\!&={\rm span}\{v_m^j\,|\,m\in\Z,\,j=0,1\}: \nonumber\\&\!\!\!\!\!\!\!\!&
L_k v_m^0=(\l+m+\mu k)v_{k+m}^0,W_k v_m^0\!=\!(b(\l+m)-\mu(k+a))v_{m+k}^1,\nonumber\\
\label{W-B-a-b}\!\!\!\!\!&\!\!\!\!\!\!\!\!& L_k v_m^1=(\l+a+m+(\mu+b+1)k)v_{m+k}^1, \ W_k v_m^1=0,\ \forall\,k,m\in\Z.
\end{eqnarray}
Obviously, the dual module of $A(\l,\mu)$ is $A(-\l,1-\mu)$ and the dual module of $B(\l,\mu)$ is $B(-\l-a,-\mu-b)$, by taking the dual basis of $\{v^j_m\,|\,j,m\in\Z\}$ to be $\{(-1)^j w^{-j}_{-m}\,|\,j,m\in\Z\}$, i.e., $w_n^i(v_m^j)=(-1)^j\delta_{i,-j}\delta_{n,-m}$. Thus, no new modules can be produced by taking duals of $A(\l,\mu)$ and $B(\l,\mu)$.

For any $\gamma\in\C\cup\{\infty\}$, we have $\WW(a,b)$-modules $A_1(\gamma)$, $A_2(\gamma)$, $A_3(\gamma)$
with basis \linebreak $\{v_m^j\,|\,j,m\in\Z\}$, $\{v_m^j\,|\,m\in\Z,\,j\geq-1\}$, $\{v_m^j\,|\,m\in\Z,\,j=-1,0\}$ respectively and actions
\begin{eqnarray}\label{W-A-r-a-b-dual}
\!\!\!\!\!\!\!\!A_1(\gamma)&\!\!\!\!\!\!\!\!&={\rm span}\{v_m^j\,|\,j,m\in\Z\}: \nonumber\\&\!\!\!\!\!\!\!\!&\L_kv^0_m=(m+k)v^0_{k+m}\,(m\neq0),\ \ \L_kv^0_{0}=k(k+\gamma)v^0_{k},\nonumber\\
&\!\!\!\!\!\!\!\!&L_kv^j_m=(aj+m+jbk)v^j_{k+m}\,(j\neq0),\nonumber\\
&\!\!\!\!\!\!\!\!&W_kv_m^{-1}=(m+k)v_{m+k}^{0},\ \ W_kv^{0}_m=\delta_{m,0}v^{1}_{k+m},\ \ W_kv^j_m=v^{j+1}_{k+m}\,(j\neq0,-1);\\[4pt]
\!\!\!\!\!\!\!\!\!\!\!\!\!\!A_2(\gamma)&\!\!\!\!\!\!\!\!&={\rm span}\{v_m^j\,|\,j,m\in\Z,\,j\geq-1\}: \nonumber\\&\!\!\!\!\!\!\!\!&L_k v_m^{-1}=(-a\!+\!m\!-\!(1+b)k)v_{m+k}^{-1},\ \ W_k v_m^{-1}=(m+k)(a+(1+b)k+bm)v_{k+m}^0,\nonumber\\
&\!\!\!\!\!\!\!\!&L_k v_m^{0}=(m+k)v_{m+k}^0\,(m\neq0),\ \ L_kv_{0}^0=k(k+\gamma)v_k^0,\ \ W_kv_m^0=\delta_{m,0}v_{m+k}^1,\nonumber\\
&\!\!\!\!\!\!\!\!&L_k v_m^j=(ja+m+jbk)v_{m+k}^j\,(j>0),\ \ W_kv_m^j=v_{m+k}^{j+1}
\,(j\neq-1,0);\label{W-A1-r-a-b-dual}\\[4pt]
\!\!\!\!\!\!\!\!\!\!\!\!\!\!A_3(\gamma)&\!\!\!\!\!\!\!\!&={\rm span}\{v_m^j\,|\,m\in\Z,\,j=0,1\}: \nonumber\\&\!\!\!\!\!\!\!\!&L_k v_m^0=(m+k)v_{m+k}^0\,(m\neq0),\ \ L_kv_{0}^0=k(k+\gamma)v_k^0,\nonumber\\
&\!\!\!\!\!\!\!\!&W_kv_m^0=bv_{m+k}^1\,(m\neq0),\ W_k v_0^0=(b\gamma-a-k)v_k^1,\nonumber\\
&\!\!\!\!\!\!\!\!&\ L_k v_m^1=(a+m+(b+1)k)v_{m+k}^1, \ W_k v_m^1=0.\label{W-B-r-a-b-dual}
\end{eqnarray}
The dual modules of $A_i(\gamma),\,i=1,2,3$ are the following.
\begin{eqnarray}
\!\!\!\!\!\!\!\!\!\!\!\!\!\!B_1(\gamma)&\!\!\!\!\!\!\!\!&={\rm span}\{v_m^j\,|\,j,m\in\Z\}: \nonumber\\&\!\!\!\!\!\!\!\!&
L_k v_m^0=mv_{m+k}^0\,(m\neq -k),\ L_k v_{-k}^0=-k(k+\gamma)v_0^0,\nonumber\\
&\!\!\!\!\!\!\!\!& L_k v_m^j=(ja+m+(jb+1)k)v_{m+k}^j\,(j\neq 0),\nonumber\\
&\!\!\!\!\!\!\!\!&W_k v_m^{-1}=\delta_{k+m,0}v_{k+m}^0,\ \ W_k v_m^0=mv_{k+m}^1,\ \ W_k v_m^{j}=v_{k+m}^{j+1}\,(j\neq 0,-1);\label{W-A-r-a-b}\\[4pt]
\!\!\!\!\!\!\!\!\!\!\!\!\!\!B_2(\gamma)&\!\!\!\!\!\!\!\!&={\rm span}\{v_m^j\,|\,j,m\in\Z,\,j\le1\}: \nonumber\\&\!\!\!\!\!\!\!\!&L_k v_m^0=mv_{m+k}^0\,(m\!\neq\!-k),\ \ L_kv_{-k}^0\!=\!-k(k+\gamma)v_0^0,\nonumber\\
&\!\!\!\!\!\!\!\!& L_k v_m^j=(ja+m+(jb+1)k)v_{m+k}^j\,(j<0),\nonumber\\
&\!\!\!\!\!\!\!\!&L_k v_m^1=(a+m+(b+2)k)v_{m+k}^1,\ \ W_kv_m^{j}=v_{k+m}^{j+1}\,(j<-1),\nonumber\\
&\!\!\!\!\!\!\!\!&W_kv_m^{-1}=\delta_{m+k,0}v_{k+m}^0,\ W_kv_m^0\!=\!m(a+k-bm)v_{m+k}^1, \ W_k v_m^1=0;\label{W-A1-r-a-b}\\[4pt]
\!\!\!\!\!\!\!\!\!\!\!\!\!\!B_3(\gamma)&\!\!\!\!\!\!\!\!&={\rm span}\{v_m^j\,|\,m\in\Z,\,j=-1,0\}: \nonumber\\&\!\!\!\!\!\!\!\!&
 L_k v_m^{0}=mv_{m+k}^0\,(m\neq-k),\ L_kv_{-k}^0=-k(k+\gamma)v_0^0,\ W_kv_m^0=0,\nonumber\\
&\!\!\!\!\!\!\!\!& L_k v_m^{-1}=(-a\!+\!m\!-\!bk)v_{m+k}^{-1}, W_k v_m^{-1}=v_{k+m}^0\,(m\!\neq\!-k),\nonumber\\
&\!\!\!\!\!\!\!\!&W_k v_{-k}^{-1}=(b\gamma-a-k)v_0^0.\label{W-B-r-a-b}
\end{eqnarray}
If $b\ne0,1$, the above are all maximal indecomposable $\WW(a,b)$-modules.
However, when $b=0$ or $1$, there will be more maximal indecomposable modules due to the fact that there exists the algebra  isomorphism \eqref{Imospp}, and thus a $\WW(a,0)$-module $V$ becomes a $\WW(a,1)$-module (denoted by $\tilde V$ when there is no confusion), by the action $xv=\eta^{-1}(x)v$ for all $x\in\WW(a,1)$.

Now suppose $b=1$. We already have maximal indecomposable $\WW(a,1)$-modules $A(\l,\mu)$, $B(\l,\mu)$, $A_i(\gamma)$, $B_i(\gamma)$, $i=1,2,3$. In addition, there will be maximal indecomposable $\WW(a,1)$-modules derived from $\WW(a,0)$-modules by using \eqref{Imospp}.
First we have maximal indecomposable $\WW(a,1)$-modules $\widetilde{A}(\l,\mu)$, $\widetilde{A_1}(\gamma)$, $\widetilde{A}_3(\gamma)$, $\widetilde{B}_1(\gamma)$, $\widetilde{B}_3(\gamma)$ derived from $\WW(a,0)$-modules $A(\l,\mu)$, $A_1(\gamma)$, $A_3(\gamma)$, $B_1(\gamma)$, $B_3(\gamma)$ respectively,
\begin{eqnarray}
\label{t-A-lambda-mu}
\!\!\!\!\!\!\!\!\!\!\!\!\!\!\widetilde{A}(\l,\mu)
&\!\!\!\!\!\!\!\!&={\rm span}\{v_m^j\,|\,j,m\in\Z\}: \nonumber\\
&\!\!\!\!\!\!\!\!&L_kv^j_m=(\l+aj+m+\mu k)v^j_{k+m},\ \ W_kv^j_m=\frac{1}{a+k}v^{j+1}_{k+m};\\[4pt]
\label{t-A-1-gamma}
\!\!\!\!\!\!\!\!\!\!\!\!\!\!\widetilde{A}_1(\gamma)&\!\!\!\!\!\!\!\!&=
{\rm span}\{v_m^j\,|\,j,m\in\Z\}:\nonumber\\
&\!\!\!\!\!\!\!\!&\L_kv^0_m=(m+k)v^0_{k+m}\,(m\neq0),\ \ \L_kv^0_{0}=k(k+\gamma)v^0_{k},\nonumber\\
&\!\!\!\!\!\!\!\!&L_kv^j_m=(aj+m)v^j_{k+m}\,(j\neq0),\ \ W_kv^j_m=\frac{1}{a+k}v^{j+1}_{k+m}\,(j\neq0,-1),\nonumber\\
&\!\!\!\!\!\!\!\!&W_kv_m^{-1}=\frac{m+k}{a+k}v_{m+k}^{0},\ \ W_kv^{0}_m=\frac{\delta_{m,0}}{a+k}v^{1}_{k+m};
\\[4pt]
\label{t-A-3-gamma}
\!\!\!\!\!\!\!\!\!\!\!\!\!\!\widetilde{A}_3(\gamma)&\!\!\!\!\!\!\!\!&=
{\rm span}\{v_m^j\,|\,m\in\Z,j=0,1\}:\nonumber\\
&\!\!\!\!\!\!\!\!&L_k v_m^0=(m+k)v_{m+k}^0\,(m\neq0),\ \ L_kv_{0}^0=k(k+\gamma)v_k^0,\nonumber\\
&\!\!\!\!\!\!\!\!&W_kv_m^0=0\,(m\neq0),\ W_k v_0^0=v_k^1,\nonumber\\
&\!\!\!\!\!\!\!\!&\ L_k v_m^1=(a+m+k)v_{m+k}^1, \ W_k v_m^1=0;
\\[4pt]
\label{t-B-l-gamma}
\!\!\!\!\!\!\!\!\!\!\!\!\!\!\widetilde{B}_1(\gamma)&\!\!\!\!\!\!\!\!&=
{\rm span}\{v_m^j\,|\,m,j\in\Z\}:\nonumber\\
&\!\!\!\!\!\!\!\!&L_k v_m^0=mv_{m+k}^0\,(m\neq -k),\ L_{k}v_{-k}^0=-k(k+\gamma)v_0^0,\nonumber\\
&\!\!\!\!\!\!\!\!& L_k v_m^j=(ja+m+k)v_{m+k}^j\,(j\neq0),\ \ W_k v_m^{-1}=\frac{\delta_{m+k,0}}{a+k}v_{m+k}^0,\nonumber\\
&\!\!\!\!\!\!\!\!&W_k v_m^0=\frac{m}{a+k}v_{m+k}^1,\ \ W_kv_m^j=\frac{1}{a+k}v_{m+k}^{j+1}\,(j\neq0,-1);
\\[4pt]
\label{t-B-3-gamm}
\!\!\!\!\!\!\!\!\!\!\!\!\!\!\widetilde{B}_3(\gamma)&\!\!\!\!\!\!\!\!&=
{\rm span}\{v_m^j\,|\,m\in\Z,j=-1,0\}:\nonumber\\
&\!\!\!\!\!\!\!\!&L_k v_m^{0}=mv_{m+k}^0\,(m\neq-k),\ L_kv_{-k}^0=-k(k+\gamma)v_0^0,\ W_kv_m^0=0,\nonumber\\
&\!\!\!\!\!\!\!\!&L_k v_m^{-1}=(-a\!+\!m)v_{m+k}^{-1}, W_k v_m^{-1}=\frac{1}{a+k}v_{k+m}^0\,(m\!\neq\!-k),\nonumber\\
&\!\!\!\!\!\!\!\!&W_k v_{-k}^{-1}=-v_0^0.
\end{eqnarray}
Note that in case $b=0$, the indecomposable $\WW(a,0)$-module $B(\l,\mu)$ of length $2$ is not maximal,
it is contained in a maximal indecomposable module (denoted by, say, $M$), but the maximal indecomposable $\WW(a,1)$-module derived from $M$ by using \eqref{Imospp} is nothing but the $\WW(a,1)$-module $A(\l,\mu)$. Thus we cannot produce any new maximal indecomposable $\WW(a,1)$-module from the $\WW(a,0)$-module $B(\l,\mu)$.
However, we found a maximal indecomposable $\WW(a,0)$-module of length 2, denoted by $\widehat{B}(\l,\mu)$,
\begin{eqnarray}
\!\!\!\!\!\!\!\!\!\!\!\!\!\!\widehat{B}(\l,\mu)&\!\!\!\!\!\!\!\!&={\rm span}\{v_m^j\,|\,m\in\Z,\,j=0,1\}: \nonumber\\
&\!\!\!\!\!\!\!\!&L_k v_m^0=(\l+m)v_{k+m}^0,\ W_k v_m^0=((am-\l k)\mu+a+k)v_{m+k}^1,\nonumber\\
&\!\!\!\!\!\!\!\!& L_k v_m^1=(\l+a+m+k)v_{k+m}^1, \ W_k v_m^1=0.\nonumber
\end{eqnarray}
Furthermore, the indecomposable $\WW(a,0)$-modules $A_2(\gamma)$, $B_2(\gamma)$ are not maximal, they are contained respectively in the maximal indecomposable  $\WW(a,0)$-modules $\widehat{A}_2(\gamma)$, $\widehat{B}_2(\gamma)$,
\begin{eqnarray}
\!\!\!\!\!\!\!\!\!\!\!\!\!\!\widehat{A}_2(\gamma)&\!\!\!\!\!\!\!\!&={\rm span}\{v_m^j\,|\,j,m\in\Z\}: \nonumber\\
&\!\!\!\!\!\!\!\!&\L_kv^0_m=(m+k)v^0_{k+m}\,(m\neq0),\ \ \L_kv^0_{0}=k(k+\gamma)v^0_{k},\ W_k v_m^{0}\!=\!\delta_{m,0}v_{m+k}^1,\nonumber\\
&\!\!\!\!\!\!\!\!&L_k v_m^j=(ja+m)v_{m+k}^j\,(j>0),\ L_k v_m^j=(ja+m+jk)v_{m+k}^j\,(j<0),\nonumber\\
&\!\!\!\!\!\!\!\!&W_k v_m^{-1}\!\!=\!\!(m+k)(a+k)v_{m+k}^0,\ \  W_kv_m^j\!=\!v_{m+k}^{j+1}\,(j>0),\nonumber\\
&\!\!\!\!\!\!\!\!&W_kv_m^j\!=\!(a+k)v_{m+k}^{j+1}\,(j<-1);
\nonumber
\\[4pt]
\!\!\!\!\!\!\!\!\!\!\!\!\!\!\widehat{B}_2(\gamma)&\!\!\!\!\!\!\!\!&={\rm span}\{v_m^j\,|\,j,m\in\Z\}: \nonumber\\
&\!\!\!\!\!\!\!\!&L_k v_m^0=mv_{m+k}^0\,(m\neq -k),\ L_{k}v_{-k}^0=-k(k+\gamma)v_0^0
,\ W_k v_m^0=m(a+k)v_{m+k}^1,\nonumber\\
&\!\!\!\!\!\!\!\!&L_k v_m^j=(ja+m+k)v_{m+k}^j\,(j<0),\ L_k v_m^j=(ja+m+(j+1)k)v_{m+k}^j\,(j>0),\nonumber\\
&\!\!\!\!\!\!\!\!&W_k v_m^{-1}\!=\!\delta_{m+k,0}v_{m+k}^0,\ W_kv_m^j\!=\!v_{m+k}^{j+1}\,(j<-1),
\ W_kv_m^j=(a+k)v_{m+k}^{j+1}\,(j>1).
\nonumber
\end{eqnarray}
Thus, we have $\WW(a,1)$-modules by using \eqref{Imospp}, denoted by $\widetilde{B}(\l,\mu)$, $\widetilde{A}_2(\gamma)$, $\widetilde{B}_2(\gamma)$,
\begin{eqnarray}
\label{t-B-lambda-mu}
\!\!\!\!\!\!\!\!\!\!\!\!\!\!\widetilde{B}(\l,\mu)&\!\!\!\!\!\!\!\!&={\rm span}\{v_m^j\,|\,m\in\Z,\,j=0,1\}: \nonumber\\
&\!\!\!\!\!\!\!\!&L_k v_m^0=(\l+m)v_{k+m}^0,\ W_k v_m^0=\frac{(am-\l k)\mu+a+k}{a+k}v_{m+k}^1,\nonumber\\
&\!\!\!\!\!\!\!\!& L_k v_m^1=(\l+a+m+k)v_{k+m}^1, \ W_k v_m^1=0;
\\[4pt]
\label{t-A-2-gamm}
\!\!\!\!\!\!\!\!\!\!\!\!\!\!\widetilde{A}_2(\gamma)&\!\!\!\!\!\!\!\!&={\rm span}\{v_m^j\,|\,j,m\in\Z\}: \nonumber\\
&\!\!\!\!\!\!\!\!&\L_kv^0_m=(m+k)v^0_{k+m}\,(m\neq0),\ \ \L_kv^0_{0}=k(k+\gamma)v^0_{k},\ W_k v_m^{0}\!=\!\frac{\delta_{m,0}}{a+k}v_{m+k}^1,\nonumber\\
&\!\!\!\!\!\!\!\!&L_k v_m^j=(ja+m)v_{m+k}^j\,(j>0),\ L_k v_m^j=(ja+m+jk)v_{m+k}^j\,(j<0),\nonumber\\
&\!\!\!\!\!\!\!\!&W_k v_m^{-1}\!\!=\!\!(m+k)v_{m+k}^0, W_kv_m^j\!=\!\frac{1}{a+k}v_{m+k}^{j+1}\,(j>0),
W_kv_m^j\!=\!v_{m+k}^{j+1}\,(j<-1);\\[4pt]
\label{t-B-2-gamm}
\!\!\!\!\!\!\!\!\!\!\!\!\!\!\widetilde{B}_2(\gamma)&\!\!\!\!\!\!\!\!&={\rm span}\{v_m^j\,|\,j,m\in\Z\}: \nonumber\\
&\!\!\!\!\!\!\!\!&L_k v_m^0=mv_{m+k}^0\,(m\neq -k),\ L_{k}v_{-k}^0=-k(k+\gamma)v_0^0
,\ W_k v_m^0=mv_{m+k}^1,\nonumber\\
&\!\!\!\!\!\!\!\!&L_k v_m^j=(ja+m+k)v_{m+k}^j\,(j<0),\ L_k v_m^j=(ja+m+(j+1)k)v_{m+k}^j\,(j>0),\nonumber\\
&\!\!\!\!\!\!\!\!&W_k v_m^{-1}\!=\!\frac{\delta_{m+k,0}}{a+k}v_{m+k}^0,\ W_kv_m^j\!=\!\frac{1}{a+k}v_{m+k}^{j+1}\,(j<-1),\ W_kv_m^j=v_{m+k}^{j+1}\,(j>1).
\end{eqnarray}

The following two sections are devoted to the proof of Theorem \ref{theo-2-1} in case $a\notin\Q$.
Note that for any $j\in\Z$ with $N_1<j<N_2$,
 the $Vir$-module $V^j$  can be in general any of $A'_{\l,\mu}$, $A'(\gamma)$, $B'(\gamma)$, $A''_{0,0}$, $T$, $A''_{0,0}\oplus T$. We split the possible $V^j$ into two cases\vs{-4pt}:\begin{itemize}\parskip-3pt
 \item[(1)]each $V^j$ is an irreducible $Vir$-module of type $A'_{\l,\mu}$.
 \item[(2)]At least one $V^j$ is not an irreducible $Vir$-module of type $A'_{\l,\mu}$.\end{itemize}

\section{Proof of Theorem \ref{theo-2-1} in the first case for $a\notin\Q$}\label{subsect1}
In this section, we always suppose that $a\notin\Q$ and each $V^j$ for $N_1<j<N_2$ is an irreducible $Vir$-module of the intermediate series  of type $A'_{\l,\mu}$
(later on, we shall consider all possible deformations).
Thus we can choose a basis $\{v_m^j\,|\,m\in\Z\}$
of $V^j$ such that

\begin{equation}\label{equa-action1}
L_k v_m^j=\ell^j_{k,m}v_{m+k}^j,\ \ \ell^j_{k,m}=\l+aj+m+k\mu_j,\mbox{ \ and \ } \W_k v_m^j=\ww_{k,m}^j v_{m+k}^{j+1},
\end{equation}
for some $\mu_j,\ww_{k,m}^j\in\C$ (cf.~\eqref{equa-weight}).
\begin{rema}\label{rema-1}\rm
\begin{enumerate}\parskip-3pt\item[(1)]
We use the bold symbol $\ww$ to emphasis that $\ww^j_{k,m}$'s are unknown variables to be determined.
\item[(2)]
We have used the convention that if an undefined symbol technically appears in an expression, we always treat it as zero; for instance, $v^j_m=0$ if $j\le N_1$ or $j\ge N_2$.
\item[(3)] By assumption, we have
\begin{equation}\label{equa-assu}
\l+aj\notin\Z\mbox{ \ or \ }\mu_j\notin\{0,1\}\mbox{ \ for \ }N_1<j<N_2.
\end{equation}
\end{enumerate}
\end{rema}
Fix $j\in\Z$ with
 $N_1<j<N_2-1$.
Since $V$ is indecomposable, there exist some $i_j,m_j$ such that
\begin{equation}\label{equa-notzero}
\ww_{i_j,m_j}^j\ne0.
\end{equation}
Applying $(a+k+bi)W_{i+k}=[L_i,W_k]$ to $v^j_m$, we obtain
\begin{equation}\label{equa-w0}
(a+k+bi)\ww^j_{i+k,m}=\ell^{j+1}_{i,k+m}\ww^j_{k,m}-\ww^j_{k,m+i}\ell^j_{i,m}.
\end{equation}
Applying $(a+k+b(i_1+i_2))[L_{i_1},[L_{i_2},W_k]]=(a+i_2+k+bi_1)(a+k+bi_2)[L_{i_1+i_2},W_k]$
to $v^j_m$, we obtain
\begin{eqnarray}\label{equa-w}
&\!\!\!\!\!\!\!\!\!\!\!\!\!\!\!\!\!\!\!\!\!\!\!\!\!\!&
(a+k+b(i_1+i_2))\Big(\ell^{j+1}_{i_1,i_2+k+m}(\ell^{j+1}_{i_2,k+m}\ww^j_{k,m}-\ww^j_{k,i_2+m}\ell^j_{i_2,m})\nonumber\\
&\!\!\!\!\!\!\!\!\!\!\!\!\!\!\!\!\!\!\!\!\!\!\!\!\!\!&
\phantom{(a+k+b(i_1+i_2))\Big(}
-(\ell^{j+1}_{i_2,i_1+k+m}\ww^j_{k,i_1+m}-\ww^j_{k,i_1+i_2+m}\ell^j_{i_2,i_1+m})\ell^j_{i_1,m}\Big)\nonumber\\
&\!\!\!\!\!\!\!\!\!\!\!\!\!\!\!\!\!\!\!\!\!\!\!\!\!\!&
=(a+i_2+k+bi_1)(a+k+bi_2)(\ell^{j+1}_{i_1+i_2,k+m}\ww^j_{k,m}-\ww^j_{k,i_1+i_2+m}\ell^j_{i_1+i_2,m}).
\end{eqnarray}
\begin{lemm}\label{lemm-w1}
Let $j\in\Z$ with $N_1<j<N_2-1$. 
For any $k\in\Z$, there exist infinitely many values of m such that $\ww^j_{k,m}\ne0$.
\end{lemm}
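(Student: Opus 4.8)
The plan is to argue by contradiction, ruling out for each fixed $k$ the possibility that $m\mapsto\ww^j_{k,m}$ is supported on only finitely many $m$. The first step is a propagation principle extracted from \eqref{equa-w0}. Reading that relation as expressing $\ww^j_{k+i,\cdot}$ through $\ww^j_{k,\cdot}$, and using that $a\notin\Q$, $b\ne0$ force $a+k+bi=0$ for at most one $i$ (the single exceptional shift being reached by changing the base index $k$), one sees that each of the properties ``$\ww^j_{k,\cdot}$ is finitely supported'' and ``$\ww^j_{k,\cdot}\equiv0$'' is independent of $k$. The identically-zero alternative contradicts \eqref{equa-notzero}, so $\ww^j_{k,\cdot}\not\equiv0$ for every $k$; what remains is to exclude the finitely-supported-but-nonzero alternative.

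For that I would work at the top of the support. Put $M=\max\{m:\ww^j_{k,m}\ne0\}$. Taking $m=M$ and $i\ge1$ in \eqref{equa-w0} kills the term $\ww^j_{k,M+i}$ and leaves the clean two-term relation $\ww^j_{k+i,M}=\frac{\ell^{j+1}_{i,k+M}}{a+k+bi}\ww^j_{k,M}$, while the same relation at $m>M$ gives $\ww^j_{k+i,m}=0$. Now $\ell^{j+1}_{i,k+M}=\lambda+a(j+1)+k+M+i\mu_{j+1}$ is affine in $i$ and vanishes identically only when $\mu_{j+1}=0$ and $\lambda+a(j+1)\in\Z$, that is exactly when $V^{j+1}\cong A'_{\lambda+a(j+1),0}$ is reducible --- excluded by \eqref{equa-assu}. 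Hence $\ell^{j+1}_{i,k+M}\ne0$ for large $i$, so $\max\operatorname{supp}\ww^j_{k+i,\cdot}=M$ stabilizes. Comparing the composition of the stabilized relation for shifts $1$ and $i$ against the direct shift $i+1$ forces a polynomial identity in $i$ whose only solution is $\lambda+aj\in\Z$; running the symmetric analysis at the bottom of the support (and on the dual module \eqref{dual-mod}) together with one further instance of \eqref{equa-w0} off the extreme index then forces $\mu_j\in\{0,1\}$ as well, so that $V^j\cong A'_{\lambda+aj,\mu_j}$ is reducible --- again contradicting \eqref{equa-assu}.

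The main obstacle is precisely this last bookkeeping: matching the degenerations of the support-edge recurrences with reducibility of one of the neighbouring blocks $V^j$, $V^{j+1}$. A cleaner, if more computational, alternative is to fix $k$ and read \eqref{equa-w} with $i_1=1$ as a recurrence solving $\ww^j_{k,m+i+1}$ in terms of $\ww^j_{k,m}$, $\ww^j_{k,m+1}$, $\ww^j_{k,m+i}$; wherever its leading coefficient (a product of $\ell^j$'s) is nonzero the admissible sequences form a space of dimension at most two, spanned by sequences affine in $m$, so any nonzero one vanishes at only finitely many $m$, and the degenerate locus where the leading coefficient vanishes is once more the reducible-block locus barred by \eqref{equa-assu}. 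Throughout, the two hypotheses doing the real work are $a\notin\Q$ (controlling the single index with $a+k+bi=0$) and \eqref{equa-assu} (guaranteeing $\ell^{j}$ and $\ell^{j+1}$ do not vanish identically), and it is their interplay that upgrades ``$\ww^j_{k,\cdot}\not\equiv0$'' to ``$\ww^j_{k,\cdot}\ne0$ for infinitely many $m$''.
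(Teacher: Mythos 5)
Your proof is correct, but it takes a genuinely different route from the paper's. The paper argues in one shot: assuming $\ww^j_{k_0,m}=0$ for $|m|>N$, it specializes the second-order identity \eqref{equa-w} at $k=k_0$, $m=n-i_1$, with both shifts $i_1,i_2$ chosen so large that three of the four $\ww$-terms fall outside the support, leaving $(a+k_0+b(i_1+i_2))\,\ell^{j+1}_{i_2,k_0+n}\,\ell^{j}_{i_1,n-i_1}\,\ww^j_{k_0,n}=0$; by \eqref{equa-assu} and $a\notin\Q$ the coefficient can be made nonzero, so $\ww^j_{k_0,\cdot}\equiv 0$, and \eqref{equa-w0} then propagates vanishing to all $k$ (with the same workaround for the exceptional shift $a+k+bi=0$ that you use), contradicting \eqref{equa-notzero}. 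You never isolate a single term with large shifts: your main line uses only the first-order identity \eqref{equa-w0}, propagates finite support (rather than vanishing) across base indices, and reads the contradiction off the support edges. Your key steps do check out: composition consistency of $(a+k+bi)\ww^j_{k+i,M}=\ell^{j+1}_{i,k+M}\ww^j_{k,M}$ for shifts $1,i$ against $i+1$ gives, in its $i^0$-coefficient, $(a+k+b)\,\ell^{j+1}_{1,k+M}\,(\l+aj+M)=0$, and varying the base $k$ (legitimate in your set-up, since finite support holds at every base index) discards the first two factors, so $\l+aj+M=0$; the bottom-edge analysis likewise gives $\l+aj+m=0$, pinning the support to the single point $m^*=-\l-aj$; then \eqref{equa-w0} at $m=m^*-i$ yields $0=\ww^j_{k,m^*}\ell^j_{i,m^*-i}=i(\mu_j-1)\ww^j_{k,m^*}$ for infinitely many $i$, hence $\mu_j=1$, which together with $\l+aj\in\Z$ contradicts \eqref{equa-assu}. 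So the two proofs even contradict different hypotheses (the paper: \eqref{equa-notzero}; you: \eqref{equa-assu}), and what your route buys is transparency about why \eqref{equa-assu} is indispensable: finitely supported $\ww$'s genuinely occur, e.g.\ $\ww_{k,m}=\delta_{k+m,0}$ in $B_1(\gamma)$--$B_3(\gamma)$, exactly when a neighbouring block fails to be irreducible of type $A'_{\l,\mu}$. The price is bookkeeping: your phrase ``whose only solution is $\l+aj\in\Z$'' is loose, since the full coefficient comparison also produces benign constraints such as $\mu_{j+1}\in\{0,b\}$, and eliminating the factors $a+k+b$ and $\ell^{j+1}_{1,k+M}$ requires the variation in $k$ that you have available but never invoke explicitly at that point. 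Your closing ``computational alternative'' (reading \eqref{equa-w} with $i_1=1$ as a recurrence, with the unproved claim that admissible sequences are affine in $m$ and form a space of dimension at most two) is the only part I would not accept as it stands, but your support-edge argument does not depend on it.
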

\noindent{\it Proof.~}~Suppose conversely there exist some $k_0,N\in\Z$ such that $\ww^j_{k_0,m}=0$ for all $m$ with $|m|>N$.
Let $n$ be any integer. Take any $i_1,i_2$ and $k=k_0,\,m=n-i_1$ such that \begin{equation}\label{equa-con-i}
|n-i_1|,|n+i_2|,|n-i_1+i_2|>N.\end{equation}
 Then all terms in \eqref{equa-w} vanish except the term containing $\ww^j_{k_0,n}$, and
we obtain
\begin{equation}\label{equa-w-not=}
(a+k_0+b(i_1+i_2))(\l+a(j+1)+k_0+n+i_2\mu_{j+1})(\l+aj+n-i_1+i_1\mu_j)\ww^j_{k_0,n}=0.
\end{equation}
Using condition \eqref{equa-assu} and the fact that $a\notin\Q$, we can always choose $i_1,i_2$ satisfying \eqref{equa-con-i} such that the coefficient of $\ww^j_{k_0,n}$ in \eqref{equa-w-not=} is not zero. Thus
$\ww^j_{k_0,n}=0$. Take $i=i_0$ and $k=k_0$ in \eqref{equa-w0}, we obtain $\ww^j_{i_0+k_0,m}=0$ if $a+k_0+bi_0\ne0$. Assume $a+k_0+bi_0=0$. Then $b\ne1$ since $a\notin\Q$. We choose $i=i_0+j$, $k=k_0-j$ for any $j\notin\{0,-i_0\}$, we again obtain $\ww^j_{i_0+k_0,m}=0$. This proves $\ww^j_{k,m}=0$ for any $k,m$, a contradiction with \eqref{equa-notzero}.\hfill$\Box$
\begin{rema}\label{rema-simpl}\rm
In order to simplify notation in the following discussions, without loss of generality, we can always suppose $j=0$ by shifting indices $j$ if necessary. When we state the results, we shall take the general case into account.
\end{rema}
\begin{lemm}\label{lemm-mu1} We have the following \vspace*{-4pt}possibilities $($cf.~Remark $\ref{Spapa})$:
\begin{itemize}\parskip-3pt
\item[\rm(1)]
$\mu_{j+1}=\mu_j+b+1$, and $\ww^j_{k,m}=b(\l+ja+m)-(a+k)\mu_j$,
\item[\rm(2)]
$b\neq 1$, $\mu_{j+1}=\mu_j+b$, and $\ww^j_{k,m}=1$,
\item[\rm(3)]
$b=1$, $\mu_j\neq 0$, $\mu_{j+1}=\mu_j+1$, and $\ww^j_{k,m}=1$,
\item[\rm(4)]
$b=1$, $\mu_j=0$, $\mu_{j+1}=1$, and $\ww^j_{k,m}=\frac{cam-c(\l+ja)k+a+k}{a+k}$ for some $c\in\C$,
\item[\rm(5)]$b=1$, $\mu_{j+1}=\mu_j\neq0, 1$, and $\ww^j_{k,m}=\frac1{a+k}$,
\item[\rm(6)]$b=1$, $\mu_{j+1}=\mu_j=0$, and $\ww^j_{k,m}=\frac{cam-c(\l+ja) k+a+k}{(a+k)(\l+ja+a+m+k)}$ for some $c\in\C$,
\item[\rm(7)]$b=1$, $\mu_{j+1}=\mu_j=1$, and $\ww^j_{k,m}=\frac{cam-c(\l+ja) k+a+k}{(a+k)(\l+ja+m)}$ for some $c\in\C$,
\item[\rm(8)]$b=1$, $\mu_j=1$, $\mu_{j+1}=0$, and $\ww^j_{k,m}=\frac{cam-c(\l+ja) k+a+k}{(\l+ja+m)(a+k)(\l+ja+a+m+k)}$ for some $c\in\C$.
\end{itemize}
\end{lemm}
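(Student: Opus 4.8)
By Remark \ref{rema-simpl} I would fix $j=0$ throughout and abbreviate $\mu:=\mu_0$, $\nu:=\mu_1$, $f(m):=\ww^0_{0,m}$. The entire computation is driven by the first-order identity \eqref{equa-w0} and the second-order identity \eqref{equa-w}. The first step is to observe that setting $k=0$ in \eqref{equa-w0} gives
$$(a+bi)\,\ww^0_{i,m}=(\l+a+m+i\nu)f(m)-(\l+m+i\mu)f(m+i),$$
so that, whenever $a+bi\ne0$, every matrix coefficient $\ww^0_{i,m}$ is an explicit expression in the single one-variable function $f$ together with the two scalars $\mu,\nu$. Thus the problem is reduced to determining $f$ and the relation between $\mu$ and $\nu$. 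The finitely many pairs $(i,k)$ for which a coefficient $a+k+bi$ accidentally vanishes will be treated separately, exactly as in the proof of Lemma \ref{lemm-w1}; here $a\notin\Q$ guarantees that such degeneracies are rare.

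Next I would compute $\ww^0_{i_1+i_2,m}$ in the two ways allowed by \eqref{equa-w0} (applying it with $(i,k)=(i_1,i_2)$ and with $(i,k)=(i_2,i_1)$), substitute the displayed expression for each $\ww^0_{i,m}$, and thereby turn the consistency condition \eqref{equa-w} into a single functional equation for $f$ whose coefficients are polynomials in $i_1,i_2,m$ (and in $\l,a,\mu,\nu$). Because $a\notin\Q$, I can let $i_1,i_2$ range over $\Z$ while keeping the $\ell$-factors and the denominators $a+bi$ away from their finitely many zeros, and then compare the resulting identity for different choices of $i_1,i_2$. Isolating the highest-order behaviour in $i_1,i_2$ forces the leading constraint, which I expect to read $\nu-\mu\in\{b,b+1\}$ generically; the borderline value $b=1$ must additionally be allowed to produce the relations $\nu=\mu$ and $\nu=\mu-1$, reflecting the isomorphism \eqref{Imospp} with $\WW(a,0)$.

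With $\nu-\mu$ pinned down, the functional equation collapses to a low-order recursion for $f$, which I would solve case by case. When $\nu=\mu+b+1$ one finds $f$ affine in $m$, giving the linear coefficient $\ww^0_{k,m}=b(\l+m)-(a+k)\mu$ of possibility (1); when $\nu=\mu+b$ with $b\ne1$ one finds $f$ constant, giving $\ww^0_{k,m}=1$ of possibility (2). For $b=1$ the denominator $a+bi=a+i$ never vanishes (as $a\notin\Z$), so every $\ww^0_{k,m}$ is a genuine rational multiple of $f$ over $a+k$; the recursion then admits a one-parameter family of rational solutions, and matching against the sub-cases $\nu-\mu\in\{1,0,-1\}$ (with the boundary values $\mu\in\{0,1\}$ producing the $c$-dependent forms) yields possibilities (3)--(8). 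Restoring the general index $j$ by replacing $\l$ with $\l+ja$ recovers the statements as written.

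The main obstacle is the $b=1$ analysis. There the shift $\nu-\mu$ is no longer forced to a single value, so three sub-regimes ($\nu=\mu+1$, $\nu=\mu$, $\nu=\mu-1$) coexist, and in the equal- or decreasing-$\mu$ regimes the solution $\ww^0_{k,m}$ is a rational function carrying the denominators $a+k$, $\l+ja+m$ and $\l+ja+a+m+k$ simultaneously. Showing that the solution space is exactly one-dimensional (parametrised by $c$), that no spurious poles survive, and that no further $\mu$-shift is possible, is the delicate part; it relies on repeated use of $a\notin\Q$ to annihilate unwanted monomials in $m$, together with a direct check of the finitely many exceptional indices where $a+k+bi=0$.
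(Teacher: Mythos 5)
Your skeleton follows the paper's actual route (fix $j=0$ via Remark \ref{rema-simpl}, use \eqref{equa-w0} with $k=0$ to reduce everything to $f(m)=\ww^0_{0,m}$, then feed \eqref{equa-w} to constrain $(\mu_0,\mu_1)$ and solve a recursion for $f$), but there is a genuine gap at the decisive step: you simply \emph{assert} that the consistency condition forces $\mu_1-\mu_0\in\{b,b+1\}$ generically, with extra branches $\mu_1=\mu_0$ and $\mu_1=\mu_0-1$ only at $b=1$. That is not what the computation gives. In the paper, the three specializations $(i_1,i_2,m)=(i,-i,n),(i,i,n-i),(-i,-i,n+i)$ of \eqref{equa-w} (at $k=0$) yield a $3\times3$ linear system in $f(n-i),f(n),f(n+i)$ whose determinant factors as $-a\,i^6(\mu_1-\mu_0-b)(\mu_1-\mu_0-b-1)(\D_2i^2+\D_1(\l+n)+\D_0)$, and vanishing of this determinant (forced by Lemma \ref{lemm-w1}) admits, besides the two expected linear factors, the solutions of $\D_2=\D_1=\D_0=0$. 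These include exotic families that your expectation misses entirely: $\{b=1,\,\mu_1=1-\mu_0\}$, the one-parameter family $\{b=1,\,\mu_0=\frac{9-t^2}{8},\,\mu_1=\frac{(t+1)(t+3)}{8}\}$, and --- crucially not confined to $b=1$ --- the family $\{\mu_0=\frac{(b+1)(b+2)}{2(2b+1)},\,\mu_1=\frac{b(1-b)}{2(2b+1)}\}$ valid for arbitrary $b$. So even your claim for $b\neq1$ ("only $\mu_1-\mu_0\in\{b,b+1\}$") is unproven as stated.

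Eliminating these spurious branches is the real content of the lemma and occupies the paper's Cases \ref{case-b1-}--\ref{case-bf}: the $t$-family collapses to earlier cases only after a second determinant computation forcing $t\in\{-5,-3,-1,1,3,-1\pm2\sqrt2,-1\pm2\sqrt3\}$; the case $\mu_1=1-\mu_0$ with $b=1$ requires solving for $\ww^0_{k,m}$ and re-substituting into \eqref{equa-w0} to force $\mu_0\in\{0,\tfrac12,1\}$; and the arbitrary-$b$ family leaves the residual value $b=-\tfrac23$, which must be killed by a separate direct contradiction. Your plan of "isolating the highest-order behaviour in $i_1,i_2$" cannot detect any of this, because on the exotic families the determinant vanishes identically (all of $\D_2,\D_1,\D_0$ are zero), not merely to leading order. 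Without carrying out this elimination, the case list (1)--(8) is an expectation rather than a theorem; the rest of your outline (solving the recursion for $f$ in each surviving case, and the change-of-basis reductions producing cases (6)--(8) from (4)) is consistent with the paper's argument and would go through once the classification of admissible $(\mu_0,\mu_1)$ is actually established.
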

\noindent{\it Proof.~}~By Remark \ref{rema-simpl}, we can suppose $j=0$. Taking $k=0$ and
taking the data $(i_1,i_2,m)$ in \eqref{equa-w} to be $(i,-i,n),\,(i,i,n-i),\,(-i,-i,n+i)$ respectively, we obtain a system of three linear equations on $\ww^0_{0,n-i},\,\ww^0_{0,n},\,\ww^0_{0,n+i}$,
{\small\begin{eqnarray}\label{equa-3equa0}
&\!\!\!\!\!\!\!\!\!\!&
f^{s,-1}_i\ww^0_{0,n-i}+f^{s,0}_i\ww^0_{0,n}+f^{s,1}_i\ww^0_{0,n+i}=0,\ \ \ s=1,2,3,\mbox{ \ \ where,}\\[5pt]
&\!\!\!\!\!\!\!\!\!\!&
f^{1,-1}_i=a(i\mu_0-\l-n)(i(\mu_1-1)+a+\l+n),
\nonumber\\
&\!\!\!\!\!\!\!\!\!\!&
f^{1,0}_i\ \,= a((b(b-1)-\mu_0(\mu_0-1)-\mu_1(\mu_1-1))i^2+2(\l+n)(a+\l+n)),\nonumber\\
&\!\!\!\!\!\!\!\!\!\!&
f^{1,1}_i\ \,=f^{1,-1}_{-i},\nonumber\\
&\!\!\!\!\!\!\!\!\!\!&
f^{2,-1}_i=b(1{\sc\!}+{\sc\!}b{\sc\!}-{\sc\!}2\mu_1(2{\sc\!}+{\sc\!}b
{\sc\!}+{\sc\!}\mu_1))i^3{\sc\!}+{\sc\!}(a(1{\sc\!}-{\sc\!}b(b{\sc\!}+{\sc\!}1)
{\sc\!}+{\sc\!}\mu_1(\mu_1{\sc\!}-{\sc\!}3)){\sc\!}
-{\sc\!}b(\l+n)(3{\sc\!}+{\sc\!}b{\sc\!}-{\sc\!}4\mu_1))i^2
\nonumber\\
&\!\!\!\!\!\!\!\!\!\!&\phantom{f^{2,-1}_i=}
-(a^2-2a(\l+n)(b-1+\mu_1)
-2b(\l+n)^2)i+a(\l+n)(a+\l+n),
\nonumber\\
&\!\!\!\!\!\!\!\!\!\!&
f^{2,0}_i\ \,=2(2bi+a)(i(\mu_0-1)-\l-n)(i\mu_1+\l+n+a),
\nonumber\\
&\!\!\!\!\!\!\!\!\!\!&
f^{2,1}_i\ \,=b(-1{\sc\!}-{\sc\!}b{\sc\!}+{\sc\!}2\mu_0(b{\sc\!}+{\sc\!}\mu_0))i^3 {\sc\!}+{\sc\!}(a(-1{\sc\!}+{\sc\!}\mu_0(\mu_0{\sc\!}+{\sc\!}1){\sc\!}+{\sc\!}2b(-1{\sc\!}+{\sc\!}2\mu_0))
{\sc\!}+{\sc\!}b(\l+n)(b{\sc\!}-{\sc\!}1{\sc\!}+{\sc\!}4\mu_0))i^2
\nonumber\\
&\!\!\!\!\!\!\!\!\!\!&\phantom{f^{2,1}_i\ \,=}
-(a^2+2a(\l+n)(1-b-\mu_1)-2b(\l+n)^2)i+a(\l+n)(a+\l+n)
,\nonumber\\
&\!\!\!\!\!\!\!\!\!\!&
f^{3,-1}_i=f^{2,1}_{-i},
\ \ \ \ \ \ \ \ 
f^{3,0}_i=f^{3,0}_{-i},
\ \ \ \ \ \ \ \ 
f^{3,1}_i=f^{2,-1}_{-i}
 .
\end{eqnarray}
}
Denote the determinant of the coefficients by $\D(i)$, which is zero for all $i$ by Lemma \ref{lemm-w1},
{\small\def\sc{}\begin{eqnarray}\label{eqn-D}
&\!\!\!\!\!\!\!\!\!\!&
0=\D(i)=-a i^6(\mu_1-\mu_0-b)(\mu_1-\mu_0-b-1)(\D_2i^2+\D_1(\l+n)+\D_0), \mbox{ \ \ where,}\\
&\!\!\!\!\!\!\!\!\!\!&
\D_2=4 b^2(-1{\sc\!} +{\sc\!}\mu_0{\sc\!} +{\sc\!} \mu_1)
(-b\mu_0 {\sc\!}+{\sc\!} b^2\mu_0 {\sc\!}+{\sc\!}\mu_0^2{\sc\!} -{\sc\!} \mu_0^3
{\sc\!} +{\sc\!} 2\mu_1{\sc\!}-{\sc\!} b\mu_1{\sc\!} -{\sc\!}
    b^2\mu_1 {\sc\!}+{\sc\!} 2 b\mu_0\mu_1 {\sc\!}-{\sc\!} \mu_0^2\mu_1 {\sc\!}-{\sc\!} 3\mu_1^2 {\sc\!}+{\sc\!}
    \mu_0\mu_1^2{\sc\!} +{\sc\!} \mu_1^3),\nonumber\\
&\!\!\!\!\!\!\!\!\!\!&
\D_1=2ab(b-1)(-2 + b + b^2 + 5\mu_0 - 2 b\mu_0 - 3\mu_0^2 + 7\mu_1 +
    2 b\mu_1 - 6\mu_0\mu_1 - 3\mu_1^2),
\nonumber\\&\!\!\!\!\!\!\!\!\!\!&
\D_0=a^2(2b {\sc\!}-{\sc\!} 3b^2 {\sc\!}+{\sc\!} b^4 {\sc\!}+{\sc\!} 2\mu_0{\sc\!}-{\sc\!}10b\mu_0
{\sc\!}+{\sc\!}
        10 b^2\mu_0 {\sc\!}-{\sc\!} 2 b^3\mu_0 {\sc\!}-{\sc\!} 3\mu_0^2 {\sc\!}+{\sc\!} 10 b\mu_0^2
        {\sc\!}-{\sc\!}        6 b^2\mu_0^2 {\sc\!}-{\sc\!} 2 b\mu_0^3 {\sc\!}+{\sc\!} \mu_0^4 {\sc\!}+{\sc\!} 6\mu_1
\nonumber\\&\!\!\!\!\!\!\!\!\!\!&\phantom{D_{0,0}=}
 {\sc\!}-{\sc\!}10 b\mu_1{\sc\!} + {\sc\!}2 b^2\mu_1
 + 2 b^3\mu_1 - 10\mu_0\mu_1 +
        18 b\mu_0\mu_1 - 6 b^2\mu_0\mu_1 + 2\mu_0^2\mu_1 -
        6 b\mu_0^2\mu_1 + 2\mu_0^3\mu_1
\nonumber\\&\!\!\!\!\!\!\!\!\!\!&\phantom{D_{0,0}=}
         - 11\mu_1^2 +
        8 b\mu_1^2
        + 8\mu_0\mu_1^2 - 6 b\mu_0\mu_1^2 +
        6\mu_1^3 - 2 b\mu_1^3 - 2\mu_0\mu_1^3 - \mu_1^4).\nonumber
\end{eqnarray}}%
Note that for any given $n_0$, if we replace the basis element $v^j_m$ by $v'^j_m=v^j_{m+n_0}$, then this amounts to replacing $\l$ by $\l+n_0$. Thus, \eqref{eqn-D} implies
\begin{equation}\label{equa-mu-j}
\mu_1=\mu_0+b,\mbox{ \ or \ }\mu_1=\mu_0+b+1,\mbox{ \ or \ }\D_2=\D_1=\D_0=0.\end{equation}
The possible solutions of \eqref{equa-mu-j} are
\begin{eqnarray}\label{equa-mu-j-s}
\!\!\!\!\!\!\!\!\!\!\!\!&\!\!\!\!\!\!\!\!\!\!&
\{\mu_1=\mu_0+b\},\{\mu_1=\mu_0+b+1\},
\{b=1,\mu_1=\mu_0\},\{b=1,\mu_1=1-\mu_0\},\nonumber\\
\!\!\!\!\!\!\!\!\!\!\!\!&\!\!\!\!\!\!\!\!\!\!&
\{b=1,\mu_0=\frac{9-t^2}{8},\mu_1=\frac{(t+1)(t+3)}{8}\},
\{\mu_0=\frac{(b+1)(b+2)}{2(2b+1)},\mu_1=\frac{b(1-b)}{2(2b+1)}\},\end{eqnarray}
and \begin{eqnarray}\label{equa-mu-j-s1}\!\!\{\mu_0=0,\mu_1=b+2\},\{\mu_0=1,
\mu_1=b\},\{\mu_0=1-b,\mu_1=0\},\{\mu_0=-1-b,\mu_1=1\}.\end{eqnarray}
Note that when $\mu_j=0$ or $1$, we can always change basis elements $v^j_m$'s such that $\mu_j$ becomes
$1$ or $0$. Thus all four cases in \eqref{equa-mu-j-s1} can be regarded as special cases of the first two cases of
\eqref{equa-mu-j-s}.

Now for all possible cases in \eqref{equa-mu-j-s}, we need to process the \vspace*{-4pt}following.\begin{itemize}
\parskip-3pt\item[(i)] Using Lemma \ref{lemm-w1}, by shifting the index $m$ of basis elements $v^0_m$'s if necessary, we can suppose $\ww^0_{0,0}\ne0$. Then
using \eqref{equa-3equa0}, we can solve for $\ww^0_{0,m}$ (in terms of $\ww^0_{0,0}$).
\item[(ii)]
Then we use \eqref{equa-w0} to solve $\ww^0_{k,m}$.
\item[(iii)] Finally we verify that the solution $\ww^0_{k,m}$ satisfies \eqref{equa-w0}.\end{itemize}

Now we consider six cases in \eqref{equa-mu-j-s} case by case.

\begin{case}\label{case-b+b0}
$\mu_1=\mu_0+b$.
\end{case}
Assume $\ww^0_{0,0}=1$ by rescaling basis elements $v^0_m$'s.
First we suppose 
$b\neq0,1$.
By \eqref{equa-3equa0}, we obtain for all $m\in\Z$, $\ww^0_{0,m}=1$. Taking $k=0$ in \eqref{equa-w0}, we have
\begin{eqnarray}\label{i0==111}
(a+bi)\ww^0_{i,m}\!\!\!&=&\!\!\!(\l+m+a+\mu_1i)\ww^0_{0,m}-(\l+m+\mu_0i)\ww^0_{0,m+i}\nonumber\\
\!\!\!&=&\!\!\!\l+m+a+(b+\mu_0)i-(\l+m+\mu_0i)=a+bi.
\end{eqnarray}
For any $i_0\in\Z$, if $a+bi_0\neq 0$, then $\ww^0_{i_0,m}=1$ for all $m\in\Z$ by \eqref{i0==111}. Suppose
$a+bi_0=0$. If $i_0\ne1$, then $\ww^0_{i,m}=1$ for all $i\neq i_0$. Taking $k=1,i=i_0-1$ in \eqref{equa-w0},
\begin{eqnarray*}
(1-b)\ww^0_{i_0,m}\!\!\!&=&\!\!\!(\l\!+\!m\!+\!1\!+\!a\!+\!(i_0\!-\!1)\mu_1)\ww^0_{1,m}\!-\!(\l\!+\!m\!+\!(i_0\!-\!1)\mu_0)\ww^0_{1,m+i_0-1}\\
&=&\!\!\!\l+m+1+a+(i_0-1)(b+\mu_0)-(\l+m+(i_0-1)\mu_0)\\
&=&\!\!\!1+a+(i_0-1)b=1-b.
\end{eqnarray*}
Thus, $\ww^0_{i_0,m}=1$.
If $i_0=1$, by taking $i=2,k=-1$ in \eqref{equa-w0}, we  again obtain $\ww^0_{1,m}=1$.
Thus Lemma \ref{lemm-mu1}(2) holds.

Now we suppose 
$b=1$.
If $\mu_0\neq 0$, by \eqref{equa-3equa0}, we easily get
$\ww^0_{0,0}-\ww^0_{0,i}=0$, i.e., $\ww^0_{0,m}=1$ for all $m\in\Z$. Then by taking $k=0$ in \eqref{equa-w0},
we  get \begin{eqnarray*}
(a+i)\ww^0_{i,m}\!\!\!&=&\!\!\!(\l+m+a+\mu_1i)\ww^0_{0,m}-(\l+m+\mu_0i)\ww^0_{0,m+i}\\
\!\!\!&=&\!\!\!\l+m+a+(1+\mu_0)i-(\l+m+\mu_0i)=a+i,
\end{eqnarray*}
i.e., $\ww^0_{k,m}=1$ for all $k,m\in\Z$. Thus Lemma \ref{lemm-mu1}(3) holds.

Hence we assume $\mu_0=0$. Then
    \eqref{equa-3equa0} gives
$\ww^0_{0,m-i}-2\ww^0_{0,m}+\ww^0_{0,m+i}=0$. In particular,
 $\ww^0_{0,m+1}-\ww^0_{0,m}$ is a constant. Thus,
 $\ww^0_{0,m}=cm+1$ for all $m\in\Z$
and for some $c\in\C$. Taking $k=0$ in \eqref{equa-w0}, we have
\begin{eqnarray*}
(a+i)\ww^0_{i,m}\!\!\!&=&\!\!\!(\l+m+a+\mu_1i)\ww^0_{0,m}-(\l+m+\mu_0i)\ww^0_{0,m+i}\\
\!\!\!&=&\!\!\!(\l+m+a+i)(cm+1)-(\l+m)(cm+ci+1)
\\\!\!\!&=&\!\!\!
cam-c\l i+a+i.
\end{eqnarray*}
So, $\ww^0_{k,m}=\frac{cam-c\l k+a+k}{a+k}$. It is easy to check that the solution satisfies \eqref{equa-w0} and
Lemma \ref{lemm-mu1}(4) holds.

\begin{case}\label{case-b+b0+1}
$\mu_{1}=b+\mu_0+1$.
\end{case}
Taking $n=0$ in \eqref{equa-3equa0}, we have
\begin{equation*}
(b(i+\l)-a\mu_0)\ww^0_{0,0}-(b\l-a\mu_0)\ww^0_{0,i}=0.
\end{equation*}
Since $\ww^0_{0,0}\neq 0$ by the assumption, we must have $b\l-a\mu_0\ne0$.
We can assume $\ww^0_{0,m}=b(m+\l)-a\mu_0$ by rescaling basis elements $v^0_m$'s. Taking $k=0$ in \eqref{equa-w0}, we have
\begin{eqnarray}\label{equa-case2win}
(a+bi)\ww^0_{i,m}\!\!\!&=&\!\!\!(\l+m+a+\mu_1i)\ww^0_{0,m}-(\l+m+\mu_0i)\ww^0_{0,m+i}\nonumber\\
\!\!\!&=&\!\!\!(\l+m+a+(b+1+\mu_0)i)(b(m+\l)-a\mu_0)\nonumber\\
&&\!-(\l+m+\mu_0i)(b(m+i+\l)-a\mu_0)\nonumber\\
\!\!\!&=&\!\!\!(a+bi)(b(m+\l)-(i+a)\mu_0).
\end{eqnarray}
Thus  Lemma \ref{lemm-mu1}(1) holds for all $k\in\Z$ with $a+bk\neq0$.

Assume $k_0\in\Z$ such that $a+bk_0\neq0$. We first assume $k_0\neq1$. Then $b\neq 1$ since $a\notin\Z$.
Using the fact that $a=-bk_0$ and taking $i=k_0-1,k=1$ in \eqref{equa-w0}, we have
\begin{eqnarray*}
(1-b)\ww^0_{k_0,m}\!\!\!&=&\!\!\!(\l\!+\!m\!+\!1\!+\!a\!+\!(k_0\!-\!1)\mu_1)\ww^0_{1,m}\!-\!(\l\!+\!m\!+\!(k_0\!-\!1)\mu_0)\ww^0_{1,m+k_0-1}\\
\!\!\!&=&\!\!\!(\l+m+1+a+(k_0-1)(b+1+\mu_0))(b(m+\l)-(1+a)\mu_0)\\
\!\!\!&&\!\!\!-(\l+m+(k_0-1)\mu_0)(b(m+k_0-1+\l)-(1+a)\mu_0)\\
\!\!\!&=&\!\!\!(a+bk_0-b+1)(b(m+\l)-\mu_0(k_0+a))\\
\!\!\!&=&\!\!\!(1-b)(b(m+\l)-\mu_0(k_0+a)).
\end{eqnarray*}
Thus Lemma \ref{lemm-mu1}(1) also holds for $k_0$.
Now assume  $k_0=1$. Taking $i=k_0+1,k=-1$ in \eqref{equa-w0}, we again conclude that $\ww^0_{k_0,m}=b(m+\l)-\mu_0(k_0+a)$, and so Lemma \ref{lemm-mu1}(1) holds.

\begin{case}\label{case-b1=}
$b=1$, $\mu_1=\mu_0$.
\end{case}
First assume $\mu_0\neq0,1$. By \eqref{equa-3equa0}, we have $\ww^0_{0,m}=\ww^0_{0,0}=\frac{1}{a}$ by rescaling basis elements $v^0_m$'s. Taking $k=0$ in \eqref{equa-w0}, we have
\begin{eqnarray*}
(a+i)\ww^0_{i,m}\!\!\!&=&\!\!\!(\l+m+a+\mu_1i)\ww^0_{0,m}-(\l+m+\mu_0i)\ww^0_{0,m+i}\\
\!\!\!&=&\!\!\!\frac{\l+m+a+\mu_0i-(\l+m+\mu_0i)}{a}=1.
\end{eqnarray*}
Thus Lemma \ref{lemm-mu1}(5) holds.

Now assume $\mu_0=\mu_1=0$. If we replace the basis elements $v_m^1$'s of $V^1$ by $\tilde v^1_m=\frac{1}{\l+a+m}v^1_m$, then $\mu_1$ becomes $1$, which
becomes a special case considered in Case \ref{case-b+b0} with $b=1$. Thus from Lemma \ref{lemm-mu1}(4), we obtain
Lemma \ref{lemm-mu1}(6) by using the fact that $\tilde v^1_m=\frac{1}{\l+a+m}v^1_m$.
Similarly, we have Lemma \ref{lemm-mu1}(7) if $\mu_0=\mu_1=1$.

\begin{case}\label{case-b1-}
$b=1$, $\mu_1=1-\mu_0$.
\end{case}
First assume $\mu_0\neq 0,1$, and suppose $\ww^0_{0,0}=1$. Taking $n=0$ in \eqref{equa-3equa0}, we easily get
$(\mu_0-1)\mu_0(\ww^0_{0,0}-\ww^0_{0,i})=0.
$
Thus $\ww^0_{0,k}=1$ for all $k\in\Z$. Taking $k=0$ in \eqref{equa-w0}, we have
\begin{eqnarray*}
(a+i)\ww^0_{i,m}\!\!\!&=&\!\!\!(\l+m+a+\mu_1i)\ww^0_{0,m}-(\l+m+\mu_0i)\ww^0_{0,m+i}\\
\!\!\!&=&\!\!\!a+i-2\mu_0 i.
\end{eqnarray*}
Thus, $\ww^0_{k,m}=\frac{a+k-2\mu_0 k}{a+k}$. Using this in \eqref{equa-w0}, we obtain
$\mu_0(2\mu_0-1)=0$, i.e., $\mu_0=\frac{1}{2}$, which is a special case in Case \ref{case-b1=}.

 The case $\mu_0=0$ is a special case of  Case \ref{case-b+b0+1}.
It remains to consider the case $\mu_0=1$. In this case, if we replace the basis elements $v_m^0$'s of $V^0$ by $\tilde v^0_m=(\l+m)v_m^0$, then $\mu_0$ becomes to $0$, thus we obtain Lemma \ref{lemm-mu1}(8) by Lemma \ref{lemm-mu1}(6).

\begin{case}\label{case-b1f}
$b=1$, $\mu_0=\frac{9-t^2}{8}$, $\mu_1=\frac{(t+1)(t+3)}{8}$.
\end{case}
By \eqref{equa-3equa0}, we have
{\small\begin{eqnarray}
&\!\!\!\!\!\!\!\!\!\!&g^1\ww^0_{0,n}-g^2\ww^0_{0,n+i}=0,\mbox{ \ \ where,}\label{equa-b1-0n}\\
&\!\!\!\!\!\!\!\!\!\!&g^1=(t-1)(1+t)(3+t)(16a^2(t-3)-i^2(1+t)^2(t(2+t)-11)+4ai(t-1)(t(2+t)-7)\nonumber\\
&&\ \ \ \ \ \ \ \ +(n+\l)(32a(t-1)-16i(t(2+t)-7))-64(n+\l)^2);\nonumber\\
&\!\!\!\!\!\!\!\!\!\!&g^2=(t-1)(1+t)(3+t)(16a^2(t-3)+i^2(t-3)(5+t)(t(2+t)-11)\nonumber\\
&&\ \ \ \ \ \ \ \ +32a(t-1)(n+\l)-64(n+\l)^2).\nonumber
\end{eqnarray}%
}%
Taking the data $(n,i)$ in \eqref{equa-b1-0n} to be $(n+i,-i)$, we obtain another equation involving $\ww^0_{0,n}$ and $\ww^0_{0,n+i}$. By Lemma \ref{lemm-w1}, the determinant of the coefficients is equal to zero, i.e.,
\begin{eqnarray*}
16(a-i)i^2(a+i)(t-3)(t-1)^2(1+t)^2(3+t)^2(5+t)(t^2\!+\!2t\!-\!7)(t^2\!+\!2t\!-\!11)\!=\!0.
\end{eqnarray*}
Thus, $t=-5,-1,-3,1,3,-1\pm2\sqrt{2},-1\pm2\sqrt{3}$. All these are special cases of Cases \ref{case-b+b0}--\ref{case-b1=}.
%
%
%
%
\begin{case}\label{case-bf}
$\mu_0=\frac{(b+1)(b+2)}{2(2b+1)}$, $\mu_1=\frac{b(1-b)}{2(2b+1)}$.
\end{case}
Similar to Case \ref{case-b1f}, we obtain that the determinant of the coefficients is zero, i.e.,
\begin{eqnarray*}
0\!\!\!&=&\!\!\!\frac{3i^6a^2(a-bi)(a-i-bi)(a+bi)(a+i+bi)}{4(1+2b)^6}\ \times\\
\!\!\!&&\!\!\!(b-1)^2b^2(b+1)^2(b+2)(3b+2)(3b^2+2b+1)(3b^2+4b+2).
\end{eqnarray*}
Thus, $b=-2,-1,0,1,-\frac{2}{3}$, or $3b^2+2b+1=0$ or $3b^2+4b+2=0$. All these except the case $b=-\frac{2}{3}$  are special cases of Cases \ref{case-b+b0}--\ref{case-b1=}.
%
%
%
%
Hence, assume $b=-\frac{2}{3}$. By \eqref{equa-3equa0},
\begin{eqnarray*}\
(a\!{\ssc\!}-{\ssc\!}\!i\!{\ssc\!}-{\ssc\!}\!n\!{\ssc\!}
-{\ssc\!}\!\l)(2a\!+\!i\!+\!n+\!\l)(a\!+\!2(i\!+\!n\!+\!\l))\ww^0_{0,n}
{\ssc\!}\!\!-\!(a\!{\ssc\!}-{\ssc\!}\!n\!{\ssc\!}-{\ssc\!}\!\l)(2a\!+\!n\!+\!\l)(a\!+\!2(n\!+\!\l))\ww^0_{0,n+i}\!=\!0.
\end{eqnarray*}
After solving $\ww^0_{0,n}$, and taking $k=0$ in \eqref{equa-w0}, we obtain the solution for $\ww^0_{k,m}$, which contradicts \eqref{equa-w0}.
This  completes the proof of the Lemma \ref{lemm-mu1}.\hfill$\Box$
\begin{rema}\label{Spapa}\rm Under the condition \eqref{equa-assu}, Lemma \ref{lemm-mu1}(6)--(8) can be regarded as Lemma \ref{lemm-mu1}(4) by re-choosing bases of $V^0,V^1$ as we have seen in the above proof. However, we need to state them separately because we shall need to consider all possible deformations (i.e., dropping condition \eqref{equa-assu}) later.
\end{rema}

If $\ell(V)=2$, by Remark \ref{Spapa} and by considering cases in Lemma \ref{lemm-mu1}(1)--(5), we obtain
that $V$ is a sub-quotient of $A(\l,\mu),\,
B(\l,\mu),\,\widetilde{A}(\l,\mu),\,\widetilde{B}(\l,\mu)$.
%
%
%
%
Hence we have Theorem \ref{theo-2-1}.
Thus in the rest of this section, we assume $\ell(V)\ge3$.

\begin{lemm}\label{lemm-mu-j-mu-j+1}
\begin{itemize}\parskip-3pt
  \item[\rm(1)] If $b\neq1$, then $V$ is a sub-quotient of a module of the form $A(\l,\mu)$.
  \item[\rm(2)] If $b=1,$ then $V$ is a sub-quotient of a module of the form $A(\l,\mu)$ or $\widetilde{A}(\l.\mu)$.
\end{itemize}
%
%
%
%
\end{lemm}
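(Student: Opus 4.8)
The plan is to bring in the one relation that Lemma~\ref{lemm-mu1} has not yet exploited, namely $[W_k,W_l]=0$, in order to couple two consecutive transitions. Applying $W_kW_l=W_lW_k$ to $v^j_m$ and substituting $W_lv^j_m=\ww^j_{l,m}v^{j+1}_{m+l}$ gives the cross-level identity
\begin{equation}\label{equa-ww-comm}
\ww^j_{l,m}\,\ww^{j+1}_{k,m+l}=\ww^j_{k,m}\,\ww^{j+1}_{l,m+k}\qquad\mbox{for all }k,l,m\in\Z.
\end{equation}
Since $\ell(V)\ge3$, after shifting the index $j$ (Remark~\ref{rema-simpl}) we may assume $V^0,V^1,V^2\ne0$; then Lemma~\ref{lemm-w1} guarantees that both transitions $V^0\to V^1$ and $V^1\to V^2$ are active, and each of $\ww^0,\ww^1$ is, after the reductions of Remark~\ref{Spapa}, one of the cases (1)--(5) of Lemma~\ref{lemm-mu1}. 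The whole argument rests on the fact that \eqref{equa-ww-comm} is rigid enough to force the two adjacent transitions into a common ``pure'' shape, and in particular to kill every deformation parameter; this is exactly what fails when $\ell(V)=2$ (a single, uncoupled transition), which is why the modules $B(\l,\mu),\widetilde{B}(\l,\mu),\overline{A}(\l,\mu)$ do not appear here.

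For part~(1), where $b\ne1$, only cases (1) and (2) are available. I would substitute the two admissible forms of $\ww^0$ and of $\ww^1$ into \eqref{equa-ww-comm} and compare coefficients in $k,l,m$. The combination (2)--(2), with $\ww^0=\ww^1=1$ and $\mu_{j+1}=\mu_j+b$, is consistent and reproduces exactly the action of $A(\l,\mu)$ with $\mu=\mu_0$. Every combination involving case (1) has coefficient $\ww^0_{k,m}=b(\l+m)-(a+k)\mu_0$ genuinely depending on $m$, and matching the $m$-dependent part of \eqref{equa-ww-comm} collapses the free parameters: (1)--(2) forces $\mu_0=0$, (2)--(1) forces $\mu_1=-b$, and (1)--(1) forces $b=-\tfrac12$ together with $\mu_0=0$. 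In each such degenerate situation the surviving case-(1) coefficient turns out to depend only on the source index $m$ or only on the target index $m+k$, so it can be absorbed by rescaling that single weight space; a short computation shows this rescaling shifts the corresponding $\mu$ by an integer and converts case (1) into case (2). Performing these rescalings identifies $V$ with a sub-quotient of $A(\l,\mu)$.

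For part~(2), where $b=1$, the cases (1),(3),(4),(5) (and the reducible (6)--(8)) occur, and one must in addition establish the dichotomy $A$ versus $\widetilde{A}$. The decisive point is that a transition with $\ww=1$ (cases (3),(4) with $c=0$) and a transition with $\ww=\frac1{a+k}$ (case (5)) cannot be adjacent: substituting $\ww^0=1,\ \ww^1=\frac1{a+k}$ into \eqref{equa-ww-comm} yields $\frac1{a+k}=\frac1{a+l}$, which is false. Consequently either every transition of $V$ is of $\ww=1$ type, whence $\mu_{j+1}=\mu_j+1$ throughout and $V$ is a sub-quotient of $A(\l,\mu)$, or every transition is of $\ww=\frac1{a+k}$ type, whence $\mu_{j+1}=\mu_j$ throughout and $V$ is a sub-quotient of $\widetilde{A}(\l,\mu)$. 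The remaining features are removed by the same tool: feeding case (4) with $c\ne0$ into \eqref{equa-ww-comm} against an adjacent active transition forces $c=0$, case (1) (with $\mu$ jumping by $2$) is excluded or rescaled away exactly as in part~(1), and the deformations (6),(7),(8) reduce to (4),(5) by the rebasings of Remark~\ref{Spapa}.

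The hard part will be the bookkeeping of the rescalings used to remove case~(1) and the deformation parameters: absorbing a coefficient into one weight space alters both the neighbouring transition and the $\mu$-labels, so one must check that the successive rescalings remain globally consistent, terminate, and never involve a vanishing scalar. This is exactly where the hypotheses $a\notin\Q$ and the non-integrality condition \eqref{equa-assu} enter, guaranteeing that factors such as $b(\l+aj+m)$ and $a+k$ do not vanish on the relevant indices and that the degenerate values of $\mu_j$ produced along the way never make some $V^j$ reducible (which would leave the present case). Checking that after all rescalings the normalised data $(\mu_j,\ww^j)$ match those of $A(\l,\mu)$, respectively $\widetilde{A}(\l,\mu)$, on the nose is the step demanding the most care.
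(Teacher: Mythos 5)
Your strategy coincides with the paper's own proof: the cross-level identity you derive from $[W_{k_1},W_{k_2}]=0$ is exactly the paper's \eqref{equa-ww}, and the paper likewise settles the lemma by analyzing adjacent pairs of transitions from Lemma \ref{lemm-mu1}, normalizing degenerate values of $\mu_j$ by rebasing. Your part (1) is sound; in fact your outcome for the (1)--(1) combination ($\mu_j=0$, $\mu_{j+1}=-b$, hence $b=-\frac12$, after which the purely $m$-dependent coefficient is absorbed into $V^j$ and one is reduced to the (2)--(1) case) is what the identity actually yields --- the paper's text asserts the stronger conclusion ``$b=0$, contradiction'' there, which is only valid after the subcase $\mu_j=0$ has been rebased away --- so on this point you are more careful than the source.

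Part (2), however, contains a step that fails. You claim that feeding case (4) with $c\ne0$ into the commutation identity against an adjacent active transition forces $c=0$. This is false: take $\ww^j_{k,m}=\frac{\l+ja+m}{a+k}$, which is case (4) with $c=\frac{1}{\l+ja}\ne0$ up to a rescaling of $V^{j+1}$, together with $\ww^{j+1}_{k,m}=\frac1{a+k}$; then both sides of \eqref{equa-ww} equal $\frac{\l+ja+m}{(a+k_1)(a+k_2)}$, so this pair is perfectly consistent. These configurations are precisely items 9 and 10 in the paper's list of nineteen possibilities for $b=1$. What adjacency actually does is pin $c$ to a specific \emph{nonzero} value; the configuration is then brought to the all-$\frac1{a+k}$ shape by the rebasing $v^j_m\mapsto(\l+ja+m)^{-1}v^j_m$ (which moves $\mu_j$ from $0$ to $1$), and only after that does it land inside $\widetilde{A}(\l,\mu)$. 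So your dichotomy ``every transition has $\ww=1$, or every transition has $\ww=\frac1{a+k}$'' is the right conclusion, but it holds only after this normalization; as written, your argument denies the existence of module configurations that genuinely occur, and that exclusion step must be replaced by the same absorb-by-rescaling device you already invoke for case (1) and for the deformations (6)--(8).
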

\setcounter{case}{0}
\begin{proof}~Applying $[\W_{k_1},\W_{k_2}]=0$ to $v_m^j$ for $N_1<j<N_2-2$, we obtain
\begin{eqnarray}\label{equa-ww}
\ww^{j+1}_{k_1,k_2+m}\ww^j_{k_2,m}=\ww^{j+1}_{k_2,k_1+m}\ww^j_{k_1,m}.
\end{eqnarray}

\begin{case}\label{ww-bneq1}
$b\neq 0,1$.
\end{case}
By Lemma \ref{lemm-mu1}, there are only two possibilities for $\mu_j$ and $\mu_{j+1}$, i.e, $\mu_{j+1}=\mu_j+b+1$ or $\mu_{j+1}=\mu_j+b$. We want to prove $\mu_{j+1}=\mu_j+b$ for all $j$.
If $\mu_{j+1}=\mu_j+b+1$ and $\mu_{j+2}=b+\mu_{j+1}+1$ for some $j$, then \eqref{equa-ww} gives
\begin{eqnarray*}
&&(b(\l+(j+1)a+k_2+m)-(a+k_1)\mu_{j+1})(b(\l+ja+m)-(a+k_2)\mu_j)\\
&\!\!\!=\!\!\!&(b(\l+(j+1)a+k_1+m)-(a+k_2)\mu_{j+1})(b(\l+ja+m)-(a+k_1)\mu_j).
\end{eqnarray*}
From this, we obtain $b=0$, a contradiction with the assumption.
If $\mu_{j+1}=b+\mu_j+1$ and $\mu_{j+2}=b+\mu_{j+1},$ then \eqref{equa-ww} gives
\begin{eqnarray*}
b(\l+ja+m)-(a+k_2)\mu_j=b(\l+ja+m)-(a+k_1)\mu_j,
\end{eqnarray*}
which implies $\mu_j=0.$ By changing the basis elements $v_m^j$'s, we can take $\mu_j$ to be $1$,
thus $\mu_{j+1}=b+\mu_j$.
If  $\mu_{j+1}=b+\mu_{j}$ and $\mu_{j+2}=b+\mu_{j+1}+1,$  then \eqref{equa-ww} gives
\begin{eqnarray*}
b(\l+(j+1)a+k_2+m)-(a+k_1)\mu_{j+1}=b(\l+(j+1)a+k_1+m)-(a+k_2)\mu_{j+1},
\end{eqnarray*}
which gives $\mu_{j+1}=-b$. Thus, $\mu_j=-2b$ and $\mu_{j+2}=1$. By changing the basis elements $v_m^{j+2}$'s, we can take $\mu_{j+2}$ to be 0, thus  $\mu_{j+2}=b+\mu_{j+1}$. Therefore, Lemma \ref{lemm-mu-j-mu-j+1}(1) holds.

\begin{case}\label{ww-bneq1+}
$b=1$.
\end{case}
By Lemma \ref{lemm-mu1}, for each pair $(j,j+1)$, there are $7$ possibilities, thus in principle, there are $49$ possibilities for the pairs $(j,j+1),\,(j+1,j+2)$. However, by taking \eqref{equa-ww} into account, we have only the following $19$ \vs{-5pt}possibilities.
%
%
%
%
%
\begin{enumerate}\parskip-3pt
  \item If $\mu_{j+1}=\mu_{j}+2$, $\mu_{j+2}=1+\mu_{j+1}$, then $\mu_j=0$, $\ww_{k,m}^j=\l+ja+m$, $\ww_{k,m}^{j+1}=1$.
%
%
%
%

  \item If $\mu_{j}=-1$, $\mu_{j+1}=1$, $\mu_{j+2}=1$, then $\ww_{k,m}^j=\l+(j+1)a+m+k$, $\ww_{k,m}^{j+1}=\frac{1}{\l+(j+1)a+m}$.
  \item If $\mu_{j}=-1$, $\mu_{j+1}=1$, $\mu_{j+2}=0$, then $\ww_{k,m}^j=\l+(j+1)a+m+k$, $\ww_{k,m}^{j+1}=\frac{1}{(\l+(j+1)a+m)(\l+ja+2a+m+k)}$.
  \item If $\mu_{j}=-2$, $\mu_{j+1}=-1$, $\mu_{j+2}=1$, then $\ww_{k,m}^j=1$, $\ww_{k,m}^{j+1}=\l+(j+1)a+m+k$.
  \item If $\mu_{j+1}=\mu_{j}+1$, $\mu_{j+2}=1+\mu_{j+1}$, then $\ww_{k,m}^j=\ww_{k,m}^{j+1}=1$.
  \item If $\mu_{j}=-1$, $\mu_{j+1}=0$, $\mu_{j+2}=1$, then $\ww_{k,m}^j=\ww_{k,m}^{j+1}=1$.
  \item If  $\mu_{j}=-1$, $\mu_{j+1}=0$, $\mu_{j+2}=0$, then $\ww_{k,m}^j=1$, $\ww_{k,m}^{j+1}=\frac{1}{\l+(j+2)a+m+k}$.
  \item If $\mu_{j}=0$, $\mu_{j+1}=1$, $\mu_{j+2}=2$, then $\ww_{k,m}^j=\ww_{k,m}^{j+1}=1$.
  \item If $\mu_{j}=0$, $\mu_{j+1}=1$, $\mu_{j+2}=1$, then $\ww_{k,m}^j=\frac{\l+ja+m}{a+k}$, $\ww_{k,m}^{j+1}=\frac{1}{a+k}$.
  \item If $\mu_{j}=0$, $\mu_{j+1}=1$, $\mu_{j+2}=0$, then $\ww_{k,m}^j=\frac{\l+(j+1)a+m+k}{a+k}$, $\ww_{k,m}^{j+1}=\frac{1}{(a+k)(\l+(j+1)a+m)}$.
  \item If $\mu_{j}=\mu_{j+1}=\mu_{j+2}$, then $\ww_{k,m}^j=\ww_{k,m}^{j+1}=\frac{1}{a+k}$.
  \item If $\mu_{j}=0$, $\mu_{j+1}=0$, $\mu_{j+2}=2$, then $\ww_{k,m}^j=\frac{1}{\l+(j+1)a+m+k}$, $\ww_{k,m}^{j+1}=\l+(j+1)a+m$.
  \item If $\mu_{j}=\mu_{j+1}=0$, $\mu_{j+2}=1$, then $\ww_{k,m}^j=\frac{1}{a+k}$, $\ww_{k,m}^{j+1}=\frac{\l+(j+2)a+m+k}{a+k}$.
  \item If $\mu_{j}=\mu_{j+1}=\mu_{j+2}=0$, then $\ww_{k,m}^j=\ww_{k,m}^{j+1}=\frac{1}{a+k}$.
  \item If $\mu_{j}=\mu_{j+1}=1$, $\mu_{j+2}=2$, then $\ww_{k,m}^j=\frac{1}{\l+ja+m}$, $\ww_{k,m}^{j+1}=1$.
  \item If $\mu_{j}=\mu_{j+1}=\mu_{j+2}=1$, then $\ww_{k,m}^j=\ww_{k,m}^{j+1}=\frac{1}{a+k}$.
 \item If $\mu_{j}\!=\!1,\,\mu_{j+1}\!=\!0,\,\mu_{j+2}\!=\!2$, then $\ww_{k,m}^j\!=\!\frac{1}{(\l+ja+m)(\l+ja+a+m+k)},\,\ww_{k,m}^{j+1}\!=\!\l\!+\!(j\!+\!1)a\!+\!m$.
 \item If $\mu_{j}=1$, $\mu_{j+1}=0$, $\mu_{j+2}=1$, then $\ww_{k,m}^j=\frac{1}{(\l+(j+1)a+m+k)(a+k)}$, $\ww_{k,m}^{j+1}=\frac{\l+(j+1)a+m}{a+k}$.
 \item If $\mu_{j}=1$, $\mu_{j+1}=0$, $\mu_{j+2}=0$, then $\ww_{k,m}^j=\frac{1}{(\l+ja+m)(a+k)}$, $\ww_{k,m}^{j+1}=\frac{1}{a+k}$.
\end{enumerate}

In fact, all those possibilities  are equivalent to the following two possibilities by changing the basis elements $v_m^{j+1},v_m^{j+2}$ if \vs{-5pt}necessary.
\begin{itemize}\parskip-3pt
\item[\rm(1)] $\mu_{j+2}=\mu_{j+1}+1=\mu_{j}+2$, and $\ww_{k,m}^j=\ww_{k,m}^{j+1}=1$.
\item[\rm(2)] $b=1$, $\mu_{j+2}=\mu_{j+1}=\mu_{j}$, and $\ww_{k,m}^j=\ww_{k,m}^{j+1}=\frac{1}{a+k}$.
\end{itemize}
%
%
%
%
%
%
%
%
By induction on $j$, we complete the proof.
\end{proof}

Now we have determined all possible structures of $V$ under the assumption in this section.

\section{Proof of the Theorem \ref{theo-2-1} in the second case for $a\notin\Q$}\label{subsect2}\setcounter{case}{0}
Now we consider the case that $a\notin\Q$ and there exists some $j_0$ such that the $Vir$-module $V^{j_0}$ is either reducible or a composition factor of $A'_{0,0}$. Then all weights of $V^{j_0}$ are integers. For any $j\ne j_0$ with $N_1<j<N_2$, since the weights of $V^j$ are in $a(j-j_0)+\Z$, we see that $V^j$ is an irreducible $Vir$-module of type $A'_{\l,\mu}$. Therefore, such $j_0$ is unique.
By shifting indices if necessary, we can suppose $j_0=0$.
Thus for any $j$ with $N_1<j<N_2-1$ and $j,j+1\ne0$, \eqref{equa-action1} holds with $\l=0$.
From this, we obtain:
\begin{rema}\label{rema-re-1}\rm
All results in Section \ref{subsect1} hold for $j$ with $N_1<j<N_2-1$ and $j,j+1\ne0$.
\end{rema}

We need to consider the following five possible \vspace*{-5pt}cases:\begin{itemize}\parskip-3pt
\item[(a)]$V^0=T=\C v^0_0$ is the $1$-dimensional trivial $Vir$-module,
\item[(b)]$V^0=A''_{0,0}$ is the nontrivial composition factor of $Vir$-module $A'_{0,0}$,
\item[(c)]$V^0=A''_{0,0}\oplus T$,
\item[(d)]$V^0= B'(\gamma)$ or $A'_{0,0}$ (note that $A'_{0,0}$ can be regarded as the case $B(\gamma)$ by
interpreting $i+\gamma$ as $1$ when $\gamma=\infty$ in \eqref{A-a}),
\item[(e)]$V^0=A'(\gamma)$ or $A'_{0,1}$.
\end{itemize}
\subsection{The case $V^0=T$}
\begin{lemm}\label{AM-lll}
If $V^1\ne0$  $($i.e., $N_2>1)$, then $V^{-1}=0$ $($i.e., $N_1=-1)$.\end{lemm}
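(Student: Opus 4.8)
The plan is to argue by contradiction: suppose $V^{-1}\neq0$ in addition to $V^1\neq0$, and derive a direct-sum decomposition of $V$ that contradicts indecomposability. Recall that in this subsection $\l=0$ and $V^0=T=\C v^0_0$ with $L_iv^0_0=0$ for all $i$. The starting observation is a support restriction coming from $\dim V^0=1$: the vector $W_kv^{-1}_m$ lies in the weight space of weight $m+k$ of $V^0$, but $V^0$ only carries weight $0$, so $W_kv^{-1}_m=0$ unless $k=-m$. Accordingly I would set $h(m):=\ww^{-1}_{-m,m}$ (so that $W_kv^{-1}_m=\delta_{k+m,0}\,h(m)\,v^0_0$) and $g(k):=\ww^0_{k,0}$ (so that $W_kv^0_0=g(k)\,v^1_k$).

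First I would pin down the support of $g$. Applying \eqref{equa-w0} with $j=0$, $m=0$ to $v^0_0$ and using $L_iv^0_0=0$, which kills the summand $\ww^0_{k,i}\ell^0_{i,0}$, the identity reduces to $(a+k+bi)\,g(i+k)=(a+k+i\mu_1)\,g(k)$ for all $i,k$, where $\mu_1$ is the parameter of $V^1$ in \eqref{equa-action1}. Indecomposability forbids $g\equiv0$ (otherwise $WV^0=0$ and $\bigoplus_{j\geq1}V^j$ would split off as a proper direct summand), so $g(k_0)\neq0$ for some $k_0$. Since $a\notin\Q$, each of the linear coefficients $a+k_0+i\mu_1$ and $a+k_0+bi$ vanishes for at most one integer $i$, so the recursion propagates $g(i+k_0)\neq0$ to all but finitely many $i$; in particular $g$ is nonzero at infinitely many indices.

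The key step is the commutativity $[\W_{k_1},\W_{k_2}]=0$, that is \eqref{equa-ww} with $j=-1$: $\ww^0_{k_1,k_2+m}\,\ww^{-1}_{k_2,m}=\ww^0_{k_2,k_1+m}\,\ww^{-1}_{k_1,m}$. Because $\ww^{-1}$ is supported on the antidiagonal, specializing $k_2=-m$ turns the left-hand side into $g(k_1)\,h(m)$, while for $k_1\neq-m$ the right-hand side vanishes; hence $g(k_1)\,h(m)=0$ whenever $k_1\neq-m$. For each fixed $m$, the previous paragraph supplies an index $k_1\neq-m$ with $g(k_1)\neq0$, and therefore $h(m)=0$. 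Thus $h\equiv0$, i.e.\ $\W V^{-1}=0$.

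Finally, $\W V^{-1}=0$ makes $\bigoplus_{j\leq-1}V^j$ an $L$- and $W$-stable subspace, hence a submodule, with complementary submodule $\bigoplus_{j\geq0}V^j$; both summands are nonzero because $V^{-1}\neq0$ and $V^0\neq0$, contradicting indecomposability and forcing $V^{-1}=0$. I expect the only real obstacle to be the support control of $g$: everything hinges on $g$ being nonzero at at least two indices (so that some $k_1\neq-m$ survives for every $m$), and this is exactly the place where the hypothesis $a\notin\Q$ is indispensable, since it prevents the coefficients in the $g$-recursion from degenerating along the integer lattice.
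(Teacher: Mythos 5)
Your proof is correct and takes essentially the same route as the paper's: the antidiagonal support $W_kv^{-1}_m=\delta_{k,-m}(\cdot)v^0_0$ forced by $\dim V^0=1$, the commutativity $[W_{k_1},W_{k_2}]=0$ applied to $V^{-1}$ to kill $WV^{-1}$, and indecomposability (cf.~\eqref{equa-notzero}) for the final contradiction. The only difference is one of detail: you establish the non-vanishing of $W_kv^0_0$ at infinitely many $k$ explicitly via the recursion from \eqref{equa-w0}, whereas the paper gets the same fact implicitly from the $Vir$-structure $L_iW_mv^0_0=(a+m+bi)W_{i+m}v^0_0$ on the vectors $v^1_m:=W_mv^0_0$.
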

\noindent{\it Proof.~}~Denote $v^1_m=W_mv^0_0$. Then \begin{equation}\label{equa-V0}
\mbox{$L_iv^1_m=[L_i,W_m]v^0_0=(a+m+bi)v^1_{i+m}$.}\end{equation}
Thus $\mu_1=b$.
If $V^{-1}\ne0$, we suppose $W_iv^{-1}_m=\delta_{i,-m}w_iv^0_0$ for some $w_i\in\C$.
Then for $i\ne k$, we have $0=[W_k,W_i]v^{-1}_{-i}=w_iW_kv^0_0=w_iv^1_k$, i.e., $w_i=0$,
a contradiction with the indecomposable condition on  $V$ (cf.~\eqref{equa-notzero}). Thus $V^{-1}=0$.\QED
\begin{lemm}\label{lemm-muj=jb}If $V^1\ne0$, then
$\mu_j=jb$ for any $0<j<N_2$ and $\ww^j_{i,m}=1$ if $j<N_2-1$. Thus $V$ is a sub-quotient module of $A(0,0)$ $($cf.~\eqref{W-A-a-b}$)$.\end{lemm}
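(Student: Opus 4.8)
The plan is to pin down, layer by layer, both the scalar $\mu_j$ governing the $Vir$-action on $V^j$ and the transition coefficients $\ww^j_{k,m}$, and to show they agree with those of $A(0,0)$. Since $V^0=T=\C v^0_0$ is trivial, the bottom of $V$ is already rigid: I would first set $v^1_m:=\W_m v^0_0$, so that $\ww^0_{m,0}=1$ by definition, and read off from \eqref{equa-V0} (that is, from $L_i v^1_m=[L_i,\W_m]v^0_0=(a+m+bi)v^1_{i+m}$) that $\ell^1_{i,m}=a+m+bi$, hence $\mu_1=b$. Because $a\notin\Q$ forces $\l=0$ and makes $V^1\cong A'_{a,b}$ irreducible, the $v^1_m$ really are a basis, and Remark \ref{rema-re-1} lets me invoke the whole machinery of Section \ref{subsect1} on every pair $(j,j+1)$ with $j,j+1\ne0$. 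If $N_2=2$ there is nothing more to prove, so I assume $V^2\ne0$.

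The engine of the induction is the abelian relation $[\W_{k_1},\W_{k_2}]=0$, i.e.\ \eqref{equa-ww}. Applied to $v^0_0$ (the only available vector in $V^0$) it gives at once $\ww^1_{k_1,k_2}=\ww^1_{k_2,k_1}$, the symmetry of $\ww^1$. Assuming inductively that $\ww^{j-1}\equiv1$, the same relation yields $\ww^{j}_{k_1,k_2+m}=\ww^{j}_{k_2,k_1+m}$ for all $k_1,k_2,m$, which forces $\ww^j_{k,m}$ to depend only on the sum $k+m$. I would then confront this translation–invariance with the explicit list of Lemma \ref{lemm-mu1} evaluated at $\mu_j=jb$: the form in \ref{lemm-mu1}(1) is $b(m-jk)$ and the degenerate $b=1$ forms are proportional to $\frac1{a+k}$, none of which depends on $k+m$ alone once $j\ge1$; only \ref{lemm-mu1}(2)--(3), namely $\mu_{j+1}=\mu_j+b$ together with $\ww^j\equiv1$, survives. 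This closes the induction, giving $\mu_j=jb$ for all $0<j<N_2$ and $\ww^j_{i,m}=1$ for $j<N_2-1$, which is exactly the action \eqref{W-A-a-b} of $A(0,0)$; identifying $V$ with the submodule $\C v^0_0\oplus\bigoplus_{j\ge1}V^j$ of $A(0,0)$ (note $L_k v^0_0=0$ makes $\C v^0_0$ a $Vir$-submodule of the bottom layer $A'_{0,0}$) exhibits $V$ as a sub-quotient of $A(0,0)$.

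The delicate point, which I expect to be the main obstacle, is the \emph{first} step of the induction when $b=1$: there $v^0_0$ supplies only the symmetry $\ww^1_{k,m}=\ww^1_{m,k}$ and not the stronger $k+m$–invariance, and the degenerate solution of \ref{lemm-mu1}(7) (equivalently \ref{lemm-mu1}(4) after rescaling $V^1$) is itself symmetric for the special value of its free parameter, so it is not eliminated by symmetry alone. To handle this I would pass to the submodule $V^{\ge1}=\bigoplus_{j\ge1}V^j$, all of whose layers are of type $A'$: when $\ell(V^{\ge1})\ge2$, Lemma \ref{lemm-mu-j-mu-j+1} confines it to an $A(\l,\mu)$– or $\widetilde A(\l,\mu)$–type, and the $\widetilde A$–type (with $\ww^j\propto\frac1{a+k}$) is incompatible with the trivial bottom layer, since the constant attachment $\ww^0_{m,0}=1$ together with the rigidity of the irreducible $A'$–layers (whose bases are unique up to a single global scalar) would force $\ww^1$ to be simultaneously proportional to $\frac1{a+k}$ and symmetric, which is impossible. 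Matching the surviving $A(\l,\mu)$–type against $\mu_1=b$ then fixes the offset to $0$, recovering $\mu_j=jb$ and $\ww^j=1$; the short remaining length cases are checked directly against the two–layer list, finishing the proof.
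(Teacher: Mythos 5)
Your first two steps and your treatment of the layers $j\ge2$ coincide with the paper's own proof: setting $v^1_m:=W_mv^0_0$ gives $\mu_1=b$, the relation $[W_i,W_k]v^0_0=0$ gives the symmetry $\ww^1_{i,k}=\ww^1_{k,i}$, and your observation that $\ww^{j-1}\equiv1$ upgrades \eqref{equa-ww} to the statement that $\ww^j_{k,m}$ depends only on $k+m$, which then singles out cases (2)--(3) of Lemma \ref{lemm-mu1}, is exactly the content of the paper's unwritten ``induction on $j$''. You have also located precisely the point that the paper's one-sentence deduction (``by Lemma \ref{lemm-mu1} we obtain $\mu_2=\mu_1+b$ and $\ww^1_{i,m}=1$'') skips over: when $b=1$, symmetry alone does not eliminate Lemma \ref{lemm-mu1}(7) (equivalently (8)), because at $c=\frac1{2a}$ that solution, $\ww^1_{k,m}=\frac{a+(k+m)/2}{(a+k)(a+m)}$, is symmetric.

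Your repair of this point, however, is where the argument genuinely fails. The rigidity you invoke --- that the bases of the irreducible layers are unique up to one global scalar, so that $\ww^1$ cannot be simultaneously symmetric and proportional to $\frac1{a+k}$ --- is false in exactly the relevant situation: since $a\notin\Q$ one has $A'_{a,0}\cong A'_{a,1}$ via the \emph{non-constant} rescaling $v_m\mapsto\frac1{a+m}v_m$. The symmetry of $\ww^1$ is measured in the basis $v^1_m=W_mv^0_0$ (where $\mu_1=1$), whereas the coefficient $\frac1{a+k}$ of $\widetilde A(0,0)$ is written in the basis with $\mu_1=0$; transporting it to the canonical basis turns $\frac1{a+k}$ into $\frac1{(a+k)(a+m)}$, which is symmetric, so no contradiction arises. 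In fact no argument can exclude this configuration, because it occurs: inside $\widetilde A(0,0)$ (cf.\ \eqref{t-A-lambda-mu}) the subspace $\C v^0_0\oplus\bigoplus_{j\ge1}V^j$ is a submodule (note $L_kv^0_0=0$ and $W_kv^0_0=\frac1{a+k}v^1_k$), and its quotient by $\bigoplus_{j\ge3}V^j$ is an indecomposable module of the intermediate series with $V^0=T$ and $V^1\ne0$ whose second layer is $A'_{2a,0}\cong A'_{2a,1}\not\cong A'_{2a,2}$; it therefore has $\mu_2\ne 2b$ and is not a sub-quotient of $A(0,0)$. So for $b=1$ the exclusion you attempt is impossible: the lemma's conclusion must be weakened to allow sub-quotients of $\widetilde A(0,0)$ as well (as Theorem \ref{theo-2-1}(3) in effect does), and the same unhandled case sits silently in the paper's own proof. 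Your proposal does prove the statement for $b\ne1$, but at $b=1$ its final step fails, and necessarily so.
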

\noindent{\it Proof.~}~We already have $\mu_1=b$. So assume $N_2>2.$ From $0=[W_i,W_k]v^0_0=(\ww^1_{i,k}-\ww^1_{k,i})v^2_{i+k}$, we obtain $\ww^1_{i,k}=\ww^1_{k,i}$. By Lemma \ref{lemm-mu1} (cf.~Remark \ref{rema-re-1}), we obtain $\mu_2=\mu_1+b$ and $\ww^1_{i,m}=1$. Now the result can be proved by induction on $j$.
\QED\vskip5pt

Dually, if $V^{-1}\ne0$ (then $V^1=0$), $V$ is a sub-quotient module of $A(0,1)$.

\subsection{The case $V^0=A''_{0,0}$}
First assume $V^1\ne0$. In this case, we can suppose \eqref{equa-action1} also holds for $j=0$ with $\mu_0=0,$ $v_0^0=0$ and $\ww^0_{k,0}=0,\,k\in\Z$. We claim
\begin{equation}\label{ww----}
W_0v^0_m=\ww^0_{0,m}v^1_{m}\ne0\mbox{ \ for some $m$ with $m\ne0$.}\end{equation}
If not, applying $[L_i,W_0]=(a+bi)W_i$ to $v_m^0$, we obtain $(a+bi)\ww_{i,m}^0=0$, i.e., $\ww^0_{i,m}=0$ for $i\ne-\frac{a}{b}$. If $i_0=-\frac{a}{b}\in\Z$, then $b\ne1$ since $a\notin\Z$. Applying $[L_i,W_{i_0-i}]=(a+i_0-i+bi)W_{i_0}$ to $v_m^0$, we have $(1-b)(i_0-i)\ww_{i_0,m}^0=0$ for $i\in\Z$.
Thus $\ww_{i,m}^0=0$ for all $i,m\in\Z$, a contradiction with \eqref{equa-notzero}. Hence, \eqref{ww----} holds.

\begin{lemm}\label{lemm-da-mu1}If $V^1\neq 0$, we have the following \vs{-5pt}possibilities:
\begin{itemize}\parskip-3pt
\item[\rm(1)]
$\mu_1=b=1$, and $\ww_{k,m}^0=\frac{m}{a+k}$,
\item[\rm(2)]
$\mu_1=b+1$, and $\ww^0_{k,m}=m$,
\item[\rm(3)]
$\mu_1=b+2$, and $\ww^0_{k,m}=m(a+k-bm)$.
\end{itemize}
\end{lemm}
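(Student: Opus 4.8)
The plan is to run the scheme of the proof of Lemma \ref{lemm-mu1}, adapted to the fact that here $V^0=A''_{0,0}$ is \emph{reducible}, so that $\mu_0=0$, $\l=0$, $v^0_0=0$ and $\ww^0_{k,0}=0$ for all $k$, while $\ell^1_{i,m}=a+m+i\mu_1$. Because Remark \ref{rema-re-1} excludes $j=0$ (condition \eqref{equa-assu} fails there), Lemmas \ref{lemm-w1} and \ref{lemm-mu1} are unavailable for the pair $(0,1)$ and must be rebuilt by hand; what survives are the bracket identities \eqref{equa-w0} and \eqref{equa-w}, valid for every $j$. Specializing \eqref{equa-w0} to $j=0$ gives the master relation
\begin{equation}\label{star-plan}
(a+k+bi)\,\ww^0_{i+k,m}=(a+k+m+i\mu_1)\,\ww^0_{k,m}-m\,\ww^0_{k,m+i},
\end{equation}
which is the engine of the whole argument.

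First I would re-prove the analogue of Lemma \ref{lemm-w1} for $j=0$: its proof goes through because in \eqref{equa-w-not=} the factor contributed by $V^0$ becomes simply $n-i_1$ (as $\mu_0=0$, $\l=0$), which is trivially nonzero, while the two $a$-dependent factors can be kept nonzero by choosing $i_1,i_2$, using $a\notin\Q$. Hence $\ww^0_{0,m}\ne0$ for infinitely many $m$. Feeding these into the three-equation system \eqref{equa-3equa0} (which is drawn from \eqref{equa-w} and so is still valid at $j=0$) shows that its coefficient determinant $\D(i)$ of \eqref{eqn-D} vanishes for all $i$ and infinitely many $n$, hence identically; reading off the factored form \eqref{eqn-D} with $\mu_0=0$, $\l=0$ then gives the trichotomy \eqref{equa-mu-j}, i.e. $\mu_1=b$, or $\mu_1=b+1$, or $\D_2=\D_1=\D_0=0$. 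Under $\mu_0=0$ the last alternative reduces, via the solution list \eqref{equa-mu-j-s}--\eqref{equa-mu-j-s1}, to the candidate $\mu_1=b+2$ together with a few exceptional roots; these last are eliminated by substitution into \eqref{star-plan} exactly as the spurious roots were discarded in Cases \ref{case-b1f}--\ref{case-bf}. This leaves $\mu_1\in\{b,b+1,b+2\}$.

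For each admissible $\mu_1$ I would then solve \eqref{equa-3equa0} for $\ww^0_{0,m}$, the constraint $\ww^0_{0,0}=0$ fixing the shape of the answer: $\ww^0_{0,m}$ is a constant multiple of $m$ for $\mu_1\in\{b,b+1\}$ and of $m(a-bm)$ for $\mu_1=b+2$. Putting $k=0$ in \eqref{star-plan} then recovers the full array $\ww^0_{k,m}$, yielding, after rescaling the basis of $V^1$ and using $a+k\ne0$ (as $a\notin\Z$), the candidates $\frac{m}{a+k}$, $m$ and $m(a+k-bm)$. A final substitution back into \eqref{star-plan} for general $k$ verifies consistency; in particular the candidate with $\mu_1=b$ survives only for $b=1$, since for $b\ne1$ the relation at, say, $i=k=1$ forces the scaling constant to vanish, contradicting \eqref{ww----}. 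The surviving solutions are precisely possibilities (1)--(3).

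The main obstacle is the determination of $\mu_1$. Since Lemma \ref{lemm-mu1} cannot be invoked, I must resurrect Lemma \ref{lemm-w1} directly and, more delicately, rule out the exceptional roots of $\D_2=\D_1=\D_0=0$ that are no longer automatically absorbed once $\mu_0=0$ is forced: for the reducible module $A''_{0,0}$ the usual toggle $\mu_0=0\leftrightarrow\mu_0=1$ by rescaling $v^0_m$ is \emph{not} available, so each exceptional value must be fed back into \eqref{star-plan} and refuted by hand. Throughout I would keep strict track of the genuine constraint $\ww^0_{k,0}=0$ and check that every division performed (by $a+k$, by $m$, and by the $L_0$-eigenvalues on $V^1$) is legitimate.
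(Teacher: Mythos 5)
Your proposal follows the same skeleton as the paper's proof: establish the trichotomy $\mu_1\in\{b,b+1,b+2\}$ from the determinant analysis of Lemma \ref{lemm-mu1} specialized to $\l=0$, $\mu_0=0$, then solve the difference equations coming from \eqref{equa-3equa0}, propagate via $k=0$ in \eqref{equa-w0}, and verify. Your first stage is sound and is in fact more careful than the paper, which compresses it into ``as in the proof of Lemma \ref{lemm-mu1}''; note only that the exceptional roots of $\D_2=\D_1=\D_0=0$ with $\mu_0=0$ can be absorbed more cheaply than by hand: although the toggle on $\mu_0$ is indeed unavailable for $A''_{0,0}$, the toggle $\mu_1\in\{0,1\}$ \emph{is} available (a basis change of $V^1$, which is an irreducible module of type $A'_{\l,\mu}$ with $\l\equiv a\notin\Q$), and it reduces every solution of \eqref{equa-mu-j-s}--\eqref{equa-mu-j-s1} with $\mu_0=0$ to the three cases $(b,0,b)$, $(b,0,b+1)$, $(b,0,b+2)$.

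The genuine gap is in the case $\mu_1=b$, i.e.\ precisely possibility (1). For $\mu_1=b$ with $b\ne1$, the recursion that \eqref{equa-3equa0} yields is $\ww^0_{0,m}=\ww^0_{0,m+i}$ for $m\ne0,\pm i$, whose solutions on $\Z\setminus\{0\}$ are the \emph{constants}, not multiples of $m$ (substituting $\ww^0_{0,m}=m$ leaves a defect proportional to $i^2(b-1)$); and your stated mechanism --- that $\ww^0_{0,0}=0$ fixes the shape --- is vacuous, because every usable instance of the recursion excludes the index $0$, so the slot $m=0$ is decoupled and a nonzero constant is perfectly compatible with it. Consequently your elimination of $b\ne1$ is performed on a family ($cm$) that does not even solve \eqref{equa-3equa0}, while the actual candidate, a nonzero constant, is never refuted; likewise for $b=1$ the general solution is $cm+c'$ and nothing in your argument kills $c'$. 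The paper closes exactly this hole with the extra relation \eqref{equa-da-i}, obtained by applying $[L_{-i},[L_i,W_0]]=(a+i-bi)(a+bi)W_0$ to $v^0_i$, where the term through $L_{-i}v^0_i=i\,v^0_0=0$ drops --- a constraint that genuinely exploits the degeneracy of $A''_{0,0}$; for a constant family it reads $(a+bi)c=0$ for all $i\ne0$, forcing $c=0$ (contradicting \eqref{ww----}), and for $b=1$ it forces $c'=0$. Your fallback of re-substituting into \eqref{equa-w0} at $j=0$ for general $k$ could indeed replace \eqref{equa-da-i} --- the inconsistency of the constant family does surface, e.g.\ at $(i,k,m)=(1,1,-1)$, where $\ww^0_{1,0}=0$ enters and one gets $(a+1+b)c=(a+b-1)c$ --- but such configurations threading the missing slot are exactly what must be checked, and your proposal never reaches them because it analyzes the wrong solution family. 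As written, the exclusion of $\mu_1=b$, $b\ne1$, and the precise form $\ww^0_{k,m}=\frac{m}{a+k}$ in possibility (1), are not established.
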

\begin{proof} As in the proof of Lemma \ref{lemm-mu1}, we have \eqref{equa-mu-j-s} or \eqref{equa-mu-j-s1} with $\mu_0=0$. All these cases can be regarded as follows,
\begin{eqnarray}\label{equa-da-0-b}
(b,\mu_0,\mu_1)=(b,0,b),\ \ (b,0,b+1),\ \ (b,0,b+2).
\end{eqnarray}
First consider the case $(b,\mu_0,\mu_1)=(b,0,b)$.
Assume $b\neq0,1$.
By \eqref{equa-3equa0}, we have $\ww_{0,m}^0=\ww_{0,m+i}^0$ for $m\neq0,\pm i$. Thus by rescaling basis elements of $V^0$ if necessary, we can assume $\ww_{0,m}^0=1$ for $m\neq0$. Applying $[L_{-i},[L_i,W_0]]=(a+i-bi)(a+bi)W_0$ to $v_{i}^0$, we obtain
\begin{eqnarray}
(2(a+i)+i(\mu_1-b)(1-\mu_1-b))\ww_{0,i}^0=(a+2i-\mu_1i)\ww_{0,2i}^0,\label{equa-da-i}
\end{eqnarray}
which is a contraction. This proves $b=1$.
By \eqref{equa-3equa0} or as in Case \ref{case-b+b0} in the proof of Lemma \ref{lemm-mu1}, we have $\ww_{0,n-i}^0-2\ww_{0,n}^0+\ww_{0,n+i}^0=0$ for $n\neq 0,\pm i$. Thus, $\ww_{0,n}^0=cn+c'$ for $n\neq 0$ and some $c,c'\in\C$. By \eqref{equa-da-i}, $c'=0$. Applying $[L_i,W_0]=(a+i)W_i$ to $v_m^0$, we have $\ww_{k,m}^0=\frac{am}{a+k}$. By rescaling the basis elements $v_m^{0}$'s, we have $\ww_{k,m}^0=\frac{m}{a+k}$, i.e., Lemma \ref{lemm-da-mu1}(1) holds.

Now consider the case
$(b,\mu_0,\mu_1)=(b,0,b+1)$.
By \eqref{equa-3equa0}, we have $(m+i)\ww_{0,m}^0=m\ww_{0,m+i}^0$ for $m\neq0,\pm i$. Thus, we can assume $\ww_{0,m}^0=m$. Similar to the above, we obtain $\ww_{k,m}^0=m$, i.e., Lemma \ref{lemm-da-mu1}(1) holds.

Finally consider the case
$(b,\mu_0,\mu_1)\!=\!(b,0,b\!+\!2)$.
By \eqref{equa-3equa0}, we have $(m\!+\!i)(a\!-\!b(i\!+\!m))\ww_{0,m}^0$ $=m(a-bm)\ww_{0,m+i}^0$ for $m\neq0,\pm i$. Thus, we can assume $\ww_{0,m}^0=m(a-bm)$. Similar to the above, we have $\ww_{k,m}^0=m(a+k-bm)$,  i.e., Lemma \ref{lemm-da-mu1}(3) holds.
\end{proof}

Now suppose $V^{-1}\neq 0$. In this case,
\eqref{equa-action1} also holds for $j=-1$ with $\mu_0=0,$ $v_0^0=0$ and  $\ww_{k,-k}^{-1}=0,\,k\in\Z$.\setcounter{case}{0}
\begin{lemm}\label{lemm-da-mu-1}If $V^{-1}\neq 0$, we have the following \vs{-5pt}possibilities:
\begin{itemize}\parskip-3pt
\item[\rm(1)]
$\mu_{-1}=-b$, and $\ww_{k,m}^{-1}=1-\delta_{m+k,0}$,
\item[\rm(2)]
$\mu_{-1}=-1-b$, and $\ww^{-1}_{k,m}=(1-\delta_{m+k,0})(a+(1+b)k+bm)$,
\item[\rm(3)]
$b=1,\,\mu_{-1}=0$, and $\ww^{-1}_{k,m}=\frac{1-\delta_{m+k,0}}{a+k}$.
\end{itemize}
\end{lemm}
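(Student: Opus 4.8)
The plan is to obtain Lemma~\ref{lemm-da-mu-1} from the already-proved Lemma~\ref{lemm-da-mu1} by passing to the dual module, since the hypothesis ``$V^{-1}\ne0$ with $V^0=A''_{0,0}$'' is the mirror image of ``$V^1\ne0$ with $V^0=A''_{0,0}$'' handled there. First I would form the dual module $\M$ of $V$ via \eqref{dual-mod}. Dualizing negates every $L_0$-eigenvalue and hence reverses the $a$-grading, so the level-$j$ piece of $\M$ is $\M^j=(V^{-j})^*$, while $W_k$ still raises the level of $\M$ by one (as it must for any $\WW(a,b)$-module of the intermediate series). In particular $\M^0=(V^0)^*=(A''_{0,0})^*$ and $\M^1=(V^{-1})^*\ne0$, and the nonzero map $V^{-1}\xrightarrow{W}V^0$ dualizes to a nonzero map $\M^0\xrightarrow{W}\M^1$, so \eqref{equa-notzero} holds for $\M$ as well.

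The key point to check is that $(A''_{0,0})^*\cong A''_{0,0}$, so that $\M$ again lies in case~(b) with $\M^1\ne0$ and Lemma~\ref{lemm-da-mu1} applies verbatim to $\M$. Writing $A''_{0,0}$ with basis $\{v_m\mid m\ne0\}$ and $L_iv_m=mv_{i+m}$, the dual basis obeys $L_iv_m^{*}=(i-m)v_{m-i}^{*}$, and putting $w_m=m\,v_{-m}^{*}$ gives $L_iw_m=m\,w_{i+m}$, i.e. the $A''_{0,0}$-action again. Hence Lemma~\ref{lemm-da-mu1} yields, for the pair $(\M^0,\M^1)$, the three possibilities for the level-$1$ parameter $\mu'_1\in\{b,\,b+1,\,b+2\}$ (the value $b$ forcing $b=1$) together with the level-$0$ coefficients $\ww'_{k,m}\in\{\tfrac{m}{a+k},\,m,\,m(a+k-bm)\}$, respectively.

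It then remains to translate this back to $V$. Since $\M^1=(V^{-1})^*$ and dualizing sends the $Vir$-module $A'_{\l,\mu}$ to $A'_{-\l,1-\mu}$, the parameter of $V^{-1}$ is $\mu_{-1}=1-\mu'_1$, giving $\mu_{-1}=0$ (with $b=1$), $\mu_{-1}=-b$, and $\mu_{-1}=-1-b$, which are exactly cases~(3),(1),(2) of Lemma~\ref{lemm-da-mu-1}. For the $W$-coefficients, the defining relation \eqref{dual-mod} gives $W_k(v^0_n)^{*}=-\ww^{-1}_{k,\,n-k}(v^{-1}_{n-k})^{*}$; combining this with the standard-basis identifications above (the reflection $u_p=(v^{-1}_{-p})^{*}$ on $\M^1$ and $w_n=n\,(v^0_{-n})^{*}$ on $\M^0$) produces the relation $\ww'_{k,n}=-n\,\ww^{-1}_{k,\,-n-k}$, i.e. $\ww^{-1}_{k,m}=\frac{1}{m+k}\ww'_{k,\,-m-k}$ for $m+k\ne0$. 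Substituting the three expressions for $\ww'$ recovers precisely $1-\delta_{m+k,0}$, $(1-\delta_{m+k,0})(a+(1+b)k+bm)$, and $\frac{1-\delta_{m+k,0}}{a+k}$ (up to an overall sign absorbed into a rescaling of $\{v^{-1}_m\}$, which leaves $\mu_{-1}$ unchanged), the factor $1-\delta_{m+k,0}$ being forced by $v^0_0=0$, equivalently $\ww^{-1}_{k,-k}=0$.

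The main obstacle is entirely bookkeeping: one must carry the index reflection and the two rescalings through the dual pairing carefully enough to recover the exact coefficient expressions and signs, not merely the values of $\mu_{-1}$. Should this translation become error-prone, a safe fallback is to argue directly, just as in Lemma~\ref{lemm-da-mu1}: for the pair $(j,j+1)=(-1,0)$ the admissible pairs $(\mu_{-1},\mu_0=0)$ are read off from \eqref{equa-mu-j-s}, and then \eqref{equa-3equa0} determines $\ww^{-1}_{0,m}$ and \eqref{equa-w0} propagates it to $\ww^{-1}_{k,m}$, the only new feature relative to Lemma~\ref{lemm-da-mu1} being the extra constraint $\ww^{-1}_{k,-k}=0$ coming from $v^0_0=0$, which is what inserts the $1-\delta_{m+k,0}$ factors.
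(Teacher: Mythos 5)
Your proof is correct, but it takes a genuinely different route from the paper's. The paper proves Lemma \ref{lemm-da-mu-1} directly, in parallel with (not as a consequence of) Lemma \ref{lemm-da-mu1}: with $\mu_0=0$, the solutions \eqref{equa-mu-j-s}--\eqref{equa-mu-j-s1} leave only $(\mu_{-1},\mu_0)=(-b,0),\ (-1-b,0),\ (1-b,0)$; in each case \eqref{equa-3equa0} pins down $\ww^{-1}_{0,m}$ up to rescaling, applying $[L_i,W_0]=(a+bi)W_i$ together with \eqref{equa-w0} propagates it to $\ww^{-1}_{k,m}$, the subcase $\mu_{-1}=1-b$ with $b\ne 1$ dies by contradiction, and $v^0_0=0$ forces $\ww^{-1}_{k,-k}=0$, whence the factors $1-\delta_{m+k,0}$ --- this is exactly your stated fallback. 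Your primary argument --- dualize, apply Lemma \ref{lemm-da-mu1} to $\M$, translate back --- is sound, and your key verifications are the right ones and are correct: $(A''_{0,0})^*\cong A''_{0,0}$ via $w_m=m\,v^*_{-m}$; dualizing sends $A'_{\l,\mu}$ to $A'_{-\l,1-\mu}$, so $\mu_{-1}=1-\mu'_1$ turns the list $\{b\ (\mbox{forcing }b=1),\,b+1,\,b+2\}$ into $\{0,\,-b,\,-1-b\}$; and the translation $\ww^{-1}_{k,m}=\frac{1}{m+k}\,\ww'_{k,-m-k}$ applied to $\frac{n}{a+k}$, $n$, $n(a+k-bn)$ returns, up to an overall scalar absorbed into rescaling $\{v^{-1}_m\}$, exactly $\frac{1-\delta_{m+k,0}}{a+k}$, $1-\delta_{m+k,0}$, and $(1-\delta_{m+k,0})(a+(1+b)k+bm)$. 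Two points should be made explicit in a write-up: (i) Lemma \ref{lemm-da-mu1} is stated for the indecomposable module $V$, so to quote it for $\M$ you should either remark that its proof uses only the structure equations plus the nonvanishing \eqref{equa-notzero} at level $0$ (which you do check for $\M$ directly), or note that the graded dual of an indecomposable weight module with finite-dimensional weight spaces is again indecomposable; (ii) your convention $\M^j=(V^{-j})^*$ reverses the indexing of \eqref{dual-mod}, so it must be stated. The trade-off: the duality route avoids redoing all computations and explains structurally why Lemmas \ref{lemm-da-mu1} and \ref{lemm-da-mu-1} mirror one another, very much in the spirit of the duality arguments the paper itself uses elsewhere in Section 5, while the paper's direct route is self-contained and yields the normalized coefficients with no basis bookkeeping.
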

\begin{proof} By \eqref{equa-mu-j-s} or \eqref{equa-mu-j-s1} with $\mu_0=0$, we have
\begin{eqnarray}\label{equa-da-b-0}
(b,\mu_{-1},\mu_0)=(b,-b,0),(b,-1-b,0),(b,1-b,0).
\end{eqnarray}
First assume
$(b,\mu_{-1},\mu_0)=(b,-b,0)$.
By \eqref{equa-3equa0}, we have $\ww_{0,m}^{-1}=\ww_{0,m+i}^{-1}$ for $m\neq0,\pm i$. Thus, we can assume $\ww_{0,m}^{-1}=1$ for $m\neq0$. Applying $[L_i,W_0]=(a+bi)W_i$ to $v_m^{-1}$ gives $\ww_{k,m}^{-1}=1-\delta_{m+k,0}$, i.e., Lemma \ref{lemm-da-mu-1}(1) holds.

Next assume
$(b,\mu_{-1},\mu_0)=(b,-1-b,0)$.
By \eqref{equa-3equa0}, we have $(a+b(i+m))\ww_{0,m}^{-1}=(a+bm)\ww_{0,m+i}^{-1}$ for $m\neq0,\pm i$. Thus we can assume $\ww_{0,m}^{-1}=a+bm$ for $m\neq0$, and so $\ww_{k,m}^{-1}=(1-\delta_{m+k,0})(a+(1+b)k+bm)$, i.e., Lemma \ref{lemm-da-mu-1}(2) holds.

Now assume
$(b,\mu_{-1},\mu_0)\!=\!(b,1\!-\!b,0)$.
If $b\neq0,1$, then by \eqref{equa-3equa0}, we have $(m\!+\!i)\ww_{0,m+i}^{-1}\!=\!m\ww_{0,m}^{-1}$ for $m\neq0,\pm i$, i.e., $\ww_{0,m}^{-1}=\frac{1}{m}$ for $m\neq0$ by rescaling the basis elements. However, by applying $[L_{-i},[L_i,W_0]=(a+bi)(a+i-bi)W_0$ to $v_{i}^{-1}$, we obtain
\begin{eqnarray*}
(2(i-a)+i(b-\mu_{-1})(b+\mu_{-1}-1))\ww_{0,i}^{-1}=2(-a+i+\mu_{-1}i)\ww_{0,2i}^{-1},
\end{eqnarray*}
which is a contradiction. Thus
$b\!=\!1$.
Then  \eqref{equa-3equa0} gives $(m\!-\!i)\ww_{0,m-i}^{-1}\!-\!2m\ww_{0,m}^{-1}\!+\!(m\!+\!i)\ww_{0,m+i}^{-1}=0$ for $m\neq0,\pm i$. Thus $\ww_{0,m}^{-1}=\frac{cm+c'}{m}$ for $m\neq0$ and some $c,c'\in\C$. Then
 \eqref{equa-3equa0} gives $\ww_{k,m}^{-1}=\frac{ac}{a+k}-\frac{c'}{m+k}$ for $m+k\neq 0$. Applying $[L_i,W_{j}]=(a+j+bi)W_{i+j}$ to $v_{-j}^{-1}$, we have $c'=0$. Thus, $\ww_{k,m}^{-1}=\frac{1-\delta_{m+k,0}}{a+k}$ by rescaling the basis elements, i.e., Lemma \ref{lemm-da-mu-1}(3) holds.
\end{proof}

\begin{lemm}\label{lemm-da}
\begin{itemize}\parskip-3pt
  \item[\rm(1)] If $V^1\neq 0$, then $V^{-1}=0$.
  \item[\rm(2)] 
  $V$ is a sub-quotient module of $B(0,0)$, $B(-a,-b-1)$, $\widetilde{B}(0,\mu)$, $A(0,0)$, $\widetilde{A}(0,0)$, $B_i(\gamma)$, $\widetilde{B}_i(\gamma)$.
\end{itemize}
\end{lemm}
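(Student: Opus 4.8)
The plan is to establish (1) by contradiction and then to assemble (2) from the interface data already computed in Lemmas \ref{lemm-da-mu1} and \ref{lemm-da-mu-1} together with the tail analysis of Section \ref{subsect1}. For part (1), suppose to the contrary that $V^1\ne0$ and $V^{-1}\ne0$ simultaneously. The only relation coupling the map $V^{-1}\to V^0$ to the map $V^0\to V^1$ is the commutativity of the $\W_i$'s, i.e.\ \eqref{equa-ww} specialized to $j=-1$, which reads
\begin{equation}
\ww^0_{k_1,\,k_2+m}\,\ww^{-1}_{k_2,m}=\ww^0_{k_2,\,k_1+m}\,\ww^{-1}_{k_1,m},\qquad k_1,k_2,m\in\Z.
\end{equation}
Here $\ww^0_{k,m}$ must be one of the three forms of Lemma \ref{lemm-da-mu1} and $\ww^{-1}_{k,m}$ one of the three forms of Lemma \ref{lemm-da-mu-1}, giving at most nine combinations once the constraints on $b$ (some forms force $b=1$) are imposed. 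In each combination I would fix $m$, specialize the two free indices, and read the identity as an equality of two polynomials in a single remaining index $k$; the two sides then have mismatched degrees or incompatible leading coefficients, so the identity fails. In the handful of borderline combinations the mismatch collapses to a relation forcing $a\in\Q$ or $b=0$, both excluded by our standing hypotheses. Hence $V^{-1}=0$.

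For part (2), I would first observe that under the standing assumptions $\ell(V)\ge2$ and \eqref{weight-1}, exactly one of $V^1,V^{-1}$ is nonzero: if both vanished, then $\W_kV^0\subseteq V^1=0$ and $V=V^0$, so $\W V=0$, contradicting \eqref{weight-1}, while both nonzero is excluded by~(1). Since passing to the dual module \eqref{dual-mod} interchanges the two alternatives, it suffices to treat $V^1\ne0$, so that $V=\bigoplus_{j\ge0}V^j$ with $N_1=-1$. For every $j\ge1$ the piece $V^j$ is an irreducible $Vir$-module of type $A'_{\l,\mu}$, so Remark \ref{rema-re-1} lets me invoke Lemmas \ref{lemm-mu1} and \ref{lemm-mu-j-mu-j+1} to determine $\mu_j$ and $\ww^j_{k,m}$ for all $j\ge1$, forcing the tail $\bigoplus_{j\ge1}V^j$ into the rigid $A$-- or (when $b=1$) $\widetilde A$--pattern. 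It then remains to glue this tail to the $V^0\to V^1$ interface supplied by the three cases of Lemma \ref{lemm-da-mu1}, and to identify the outcome with a named module by a direct comparison of the defining formulas.

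I would carry out this matching case by case. Lemma \ref{lemm-da-mu1}(2), with $\ww^0_{k,m}=m$, reproduces the $V^0$--$V^1$ action of $B_1(\gamma)$, and for $\ell(V)=2$ its length-two truncation $B(0,0)$; Lemma \ref{lemm-da-mu1}(3), with $\ww^0_{k,m}=m(a+k-bm)$, reproduces $B_2(\gamma)$ and its truncation $B(-a,-b-1)$; and the $b=1$ case Lemma \ref{lemm-da-mu1}(1), with $\ww^0_{k,m}=\frac{m}{a+k}$, reproduces $\widetilde B_1(\gamma)$ and $\widetilde B(0,\mu)$, the remaining $b=1$ tails producing $\widetilde B_2(\gamma)$ and $\widetilde B_3(\gamma)$. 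Because $V^0=A''_{0,0}$ is precisely the nontrivial quotient of $B'(\infty)=A'_{0,0}$, the relevant modules are taken at $\gamma=\infty$, the weight-zero vector $v^0_0$ being sent to $0$; the degenerate identifications at this specialization, together with the mirror identifications on the dual side via \eqref{Imospp}, are what account for $A(0,0)$ and $\widetilde A(0,0)$ also appearing. Thus $V$ is a sub-quotient of one of $B(0,0),\,B(-a,-b-1),\,\widetilde B(0,\mu),\,A(0,0),\,\widetilde A(0,0),\,B_i(\gamma),\,\widetilde B_i(\gamma)$.

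I expect the main obstacle to be twofold. In (1), the difficulty is to organize the combination check so that every borderline case genuinely invokes $a\notin\Q$ or $b\ne0,1$ rather than silently assuming the generic degree mismatch; a few pairings (notably Lemma \ref{lemm-da-mu1}(3) against the forms of Lemma \ref{lemm-da-mu-1}) are not separable and require a more careful specialization of indices before the contradiction appears. In (2), the real work is the bookkeeping of identifying each glued structure with the correct named module under the permitted rescalings, index shifts, and the $\gamma=\infty$ (equivalently $v^0_0=0$) degeneration — in particular, verifying that no admissible combination escapes the stated list and that each $b=1$ tilde module is matched to the appropriate $\WW(a,1)$-module through the isomorphism \eqref{Imospp}.
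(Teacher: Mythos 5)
Your overall strategy is the paper's own: part (1) is proved there exactly as you propose, by feeding the interface data of Lemmas \ref{lemm-da-mu1} and \ref{lemm-da-mu-1} into the commutativity relation \eqref{equa-ww} at $j=-1$ and ruling out every combination, and part (2) is obtained by gluing the $V^0$--$V^1$ interface to the possibilities of Lemma \ref{lemm-mu1} via \eqref{equa-ww} at $j=0$, ``as in the proof of Lemma \ref{lemm-mu-j-mu-j+1}.'' Your plan for (1) is sound.

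The gap is in the identifications in part (2), several of which are false. First, $B(-a,-b-1)$ is \emph{not} a truncation of $B_2(\gamma)$, nor related to Lemma \ref{lemm-da-mu1}(3) at all: the $j=0,1$ truncation of $B_2(\gamma)$ has bottom weights in $\Z$ and top weights in $a+\Z$, whereas $B(-a,-b-1)$ has bottom weights in $-a+\Z$ and top piece $A'_{0,0}$; since $a\notin\Q$ and $W$ raises the index, these cannot be matched. In fact $B(-a,-b-1)$ belongs to the $V^{-1}\ne0$ side: its interface $b(-a+m)-(-b-1)(k+a)=a+(1+b)k+bm$ is precisely Lemma \ref{lemm-da-mu-1}(2), realized after killing $v^1_0$ in its top piece $A'_{0,0}$. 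Second, no sub-quotient of $\widetilde B(0,\mu)$ realizes Lemma \ref{lemm-da-mu1}(1): in $\widetilde B(0,\mu)$ one has $W_kv^0_0=v^1_k\ne0$, so $v^0_0$ cannot be killed, and every sub-quotient whose $0$-piece is $A''_{0,0}$ carries trivial $W$-action; the Lemma \ref{lemm-da-mu1}(1) module (where the constant term $c'$ is forced to vanish) is a sub-quotient of $\widetilde B_1(\gamma)$ only. Third, your duality reduction is legitimate in principle, but it outputs the duals of the $V^1$-side list, namely $B(-a,-b)$, $A_1(\gamma)=B_1(\gamma)^*$, $A_2(\gamma)=B_2(\gamma)^*$, $\widetilde A_1(\gamma)$, none of which appears in the statement; to land in the stated list you must re-identify each $V^{-1}$-side case directly --- Lemma \ref{lemm-da-mu-1}(1) inside $A(0,0)$ or $B_3(\gamma)$, Lemma \ref{lemm-da-mu-1}(2) inside $B(-a,-b-1)$, Lemma \ref{lemm-da-mu-1}(3) inside $\widetilde A(0,0)$ or $\widetilde B_3(\gamma)$ --- which is what the paper does by treating that side with Lemma \ref{lemm-da-mu-1} rather than by dualizing. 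Your write-up omits this reconciliation and instead attributes those modules to the wrong side; also, the dual-side mechanism is \eqref{dual-mod}, not the isomorphism \eqref{Imospp}, which plays no role here.
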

\begin{proof}
In order to obtain Lemma \ref{lemm-da}(1), we consider \eqref{equa-ww} with $j=-1$, and use Lemmas \ref{lemm-da-mu1}, \ref{lemm-da-mu-1}. Then the result can be obtained as in the proof of Lemma \ref{lemm-mu-j-mu-j+1}.

Now Considering all the possibilities in Lemma \ref{lemm-mu1} with $j=1$, and using \eqref{equa-ww} with $j=0$, we obtain Lemma \ref{lemm-da}(2) as in the proof of Lemma \ref{lemm-mu-j-mu-j+1}.
\end{proof}

\subsection{The case $V^0=A''_{0,0}\oplus T$}
As in the proofs of Lemmas \ref{AM-lll}, \ref{lemm-da}, we have $V^1=0$ or $V^{-1}=0$.

\begin{lemm}\label{lemm-daT}\begin{itemize}\parskip-3pt\item[\rm(1)]
If $V^{-1}=0$, $V^{1}\neq 0$, then $\mu_1=b$.
\item[\rm(2)]There does not exist an indecomposable $\WW(a,b)$-module $V$ with $V^{-1}=0$, $V^{1}\neq 0$.
\end{itemize}\end{lemm}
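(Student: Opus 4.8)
The plan is to split the $W$-action on $V^0=A''_{0,0}\oplus T$ into the part coming from the trivial summand and the part coming from $A''_{0,0}$. Write the (isolated, weight-zero) generator of $T$ as $w$, so $L_iw=0$, and set $W_kw=c_kv^1_k$; for the $A''_{0,0}$-vectors $v^0_m$ ($m\ne0$) set $W_kv^0_m=\ww^0_{k,m}v^1_{k+m}$. The observation used throughout is that indecomposability forces each of these families to be nonzero: if all $c_k=0$ then $\C w$ is a submodule with complementary submodule $A''_{0,0}\oplus V^1$, and if all $\ww^0_{k,m}=0$ then $A''_{0,0}$ is a submodule with complementary submodule $T\oplus V^1$, so in either case $V$ would split.

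For part (1) I would apply $[\L_i,\W_k]=(a+k+bi)\W_{i+k}$ to $w$. Since $L_iw=0$ this collapses to $(a+k+bi)c_{i+k}=(a+k+i\mu_1)c_k$. Taking $k=0$ gives $c_i=c_0(a+i\mu_1)/(a+bi)$, and $c_0=0$ is impossible: by $a\notin\Q$ it would propagate to $c_i=0$ for every $i$ (a single index with $a+bi=0$ cannot carry a lone nonzero value without forcing $\mu_1=0$ and $a\in\Z$). Substituting $c_i$ back for general $k$ and clearing denominators turns the recursion into a polynomial identity in $i,k$; matching its top-degree coefficients forces either $\mu_1=b$, or the stray branch $\mu_1=0,\,b=1$. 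The latter is excluded here because $\mu_1=0$ is incompatible with a nonzero $A''_{0,0}$-map, whose admissible values are only $\mu_1\in\{b,b+1,b+2\}$ by Lemma \ref{lemm-da-mu1}, and we just saw that map must be nonzero. Hence $\mu_1=b$.

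For part (2), by part (1) we have $\mu_1=b$. Because $L$ preserves the $A''_{0,0}$-summand and never feeds $v^0_m$ ($m\ne0$) into $\C w$, the constants $\ww^0_{k,m}$ obey exactly the relations of Lemma \ref{lemm-da-mu1}; together with $\mu_1=b$ this is its case (1), so $b=1$, $\ww^0_{k,m}=\frac{m}{a+k}$ up to scale, and then $\mu_1=b=1$ makes $c_k\equiv c_0$ constant. It remains to contradict the simultaneous nonvanishing of $c$ and $\ww^0$. Here I would return to \eqref{equa-w0} and read it off exactly at the weight-zero junction, i.e.\ at $i+m=0$ and at $m=0$, where the value $\ww^0_{k,0}=c_k$ sits beside the coefficient of $\L_{-m}v^0_m$; tracking which terms of \eqref{equa-w0}--\eqref{equa-w} actually survive there (rather than blindly inserting the generic value $\ell^0_{i,m}=m$) is what decides whether $c$ and $\ww^0$ can coexist. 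I expect this to force $c_0=0$, whence $T$ splits off and $V$ decomposes, yielding the desired contradiction and ruling out such an indecomposable $V$.

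The step I expect to be the main obstacle is precisely this boundary bookkeeping at the weight-zero slot. The fate of the single term attached to $\L_{-m}v^0_m$ is what distinguishes the split case $A''_{0,0}\oplus T$ treated here from the non-split cases $A'_{0,0}$ and $B'(\gamma)$ handled elsewhere, so one must be scrupulous about which instances of \eqref{equa-w0} and \eqref{equa-w} are genuinely available before concluding that $c$ and $\ww^0$ are incompatible. Once that is pinned down, everything else reduces to the recursion-solving already carried out repeatedly in Sections \ref{subsect1} and \ref{subsect2}.
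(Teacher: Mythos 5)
Your part (1) is correct and is essentially the paper's own argument: both apply $[\L_i,\W_k]=(a+k+bi)\W_{i+k}$ to the generator of $T$, use indecomposability to guarantee some $c_k\ne0$, and extract from the recursion $(a+k+bi)c_{i+k}=(a+k+\mu_1i)c_k$ that $\mu_1=b$ or $(b,\mu_1)=(1,0)$; the paper disposes of the stray branch by renormalizing the basis of $V^1$ (using $A'_{a,0}\cong A'_{a,1}$ for $a\notin\Z$), which amounts to the same conclusion as your appeal to Lemma \ref{lemm-da-mu1}.

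Part (2) has a genuine gap, located exactly at the step you defer: ``I expect this to force $c_0=0$'' is never established, and it cannot be established along the route you describe. Carry out the boundary bookkeeping you prescribe. In the split sum $V^0=A''_{0,0}\oplus T$ one has $L_iv^0_{-i}=0$ (not $-i\,v^0_0$) and $L_iw=0$; hence in every identity obtained by applying $[\L_i,\W_k]$ to a vector of $V^0$, the term that would couple the $c_k$'s to the $\ww^0_{k,m}$'s drops out, and when $V^2=0$ the relations $[\W_i,\W_j]=0$ give nothing at all. The two families of unknowns therefore satisfy completely decoupled systems, and one checks directly that $b=\mu_1=1$, $\ww^0_{k,m}=\frac{m}{a+k}$, $c_k\equiv c$ satisfies every bracket identity for arbitrary $c$: for instance $[\L_i,\W_k]v^0_{-i}$ gives $-i\,v^1_k$ on both sides, and $[\L_i,\W_k]w$ gives $(a+k+i)c\,v^1_{k+i}$ on both sides. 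For $c\ne0$ the resulting data is an indecomposable module of the intermediate series with $V^0=A''_{0,0}\oplus T$, $V^{-1}=0$, $V^1\ne0$ (since $W_kw=cv^1_k\ne0$ and $W_kv^0_m\ne0$ tie all weight spaces together), so the contradiction you anticipate does not exist. Note that the paper's own proof of (2) obtains ``$\ww^0_{k,0}=0$'' precisely by substituting the generic value $\ell^0_{i,-i}=-i$ into \eqref{equa-w0}, i.e.\ by the ``blind insertion'' you rightly warn against; that value encodes $L_iv^0_{-i}=-i\,v^0_0$, which is the non-split module $A'_{0,0}$ treated in the next subsection, not the split sum considered here. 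So you have located the crux correctly, but resolved correctly it produces a module rather than a contradiction: your plan (and, as far as I can see, the paper's own argument) does not close part (2).
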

\begin{proof}~We claim\begin{equation}\label{Claim111}\ww^0_{m_0,0}\ne0
\mbox{ \ for  some $m_0\in\Z$}.\end{equation} Otherwise, $V$ would be a decomposable $\WW(a,b)$-module. Applying $[L_i,W_0]-(a+bi)W_i=0$, $[L_{-i},W_i]-(a+i-bi)W_0=0$ to $v_0^0$, we obtain
\begin{eqnarray*}
(a+bi)\ww_{i,0}^0-(a+\mu_1i)\ww_{0,0}^0=0,\ \ (a+i-\mu_1i)\ww_{i,0}^0-(a+i-bi)\ww_{0,0}^0=0.
\end{eqnarray*}
The determinant, denoted by $\D'$,  of coefficients of the above linear equations must be zero, i.e, $\Delta'=i^2(\mu_1-b)(1-\mu_1-b)=0$. Thus, $\mu_1=b$ or $\mu_1=1-b$. By \eqref{equa-w0} with $j=m=0$, we obtain $\mu_1=b$ or $b=1,\mu_1=0$. Hence, by changing the basis elements $v_m^1$'s if necessary, we can always suppose $\mu_1=b$. This proves Lemma \ref{lemm-daT}(1).

As in the proof of Lemma \ref{lemm-da-mu1}, we have $b=1$, and $\ww_{k,m}^0=\frac{am}{a+k}$ for $m\neq0$. Then by \eqref{equa-w0}, we would have $\ww_{k,0}^0=0$ for all $k$, a contradiction with \eqref{Claim111}. This proves
Lemma \ref{lemm-daT}(2).\end{proof}

Dually,  there does exist an indecomposable $\WW(a,b)$-module $V$ with $V^{1}=0$, $V^{-1}\neq 0$.

\subsection{The case $V^0=B'(\gamma)$ or $A'_{0,0}$}
 By Convention \ref{con1}, we can assume $V^0=B'(\gamma)$.
 \begin{lemm}\label{lemm-+aa}
 If $V^1\ne0$, then we have the results in Lemma $\ref{lemm-da-mu1}$.\end{lemm}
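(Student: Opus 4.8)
The plan is to follow the proof of Lemma~\ref{lemm-da-mu1} almost verbatim, the only new ingredient being the deformed $Vir$-action $L_iv^0_{-i}=-i(i+\gamma)v^0_0$ of $B'(\gamma)$, which enters exactly one relation and is ultimately killed. Since $a\notin\Q$, the weights of $V^1$ lie in $a+\Z$, which is disjoint from $\Z$; hence condition \eqref{equa-assu} holds automatically for $j=1$ and $V^1$ is an irreducible $Vir$-module of type $A'_{0,\mu_1}$, so that $\ell^1_{i,m}=a+m+\mu_1 i$ and I write $W_kv^0_m=\ww^0_{k,m}v^1_{k+m}$. Here $\ell^0_{i,m}=m$ for $m\ne-i$ while $\ell^0_{i,-i}=-i(i+\gamma)$, so relation \eqref{equa-w0} for $j=0$ reads $(a+k+bi)\ww^0_{i+k,m}=\ell^1_{i,k+m}\ww^0_{k,m}-\ell^0_{i,m}\ww^0_{k,m+i}$, and \eqref{equa-w} reduces at $k=0$ to the system \eqref{equa-3equa0}.

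First I determine $\mu_1$ and the values $\ww^0_{k,m}$ for $m\ne0$. By Lemma~\ref{lemm-w1} there are infinitely many admissible $m$, so in \eqref{equa-3equa0} and in \eqref{equa-w0} I may restrict to weights $m$ with $m$ and $m\pm i$ all nonzero. On this range $\ell^0$ equals the plain weight, so every relation is \emph{literally} the one occurring in Lemma~\ref{lemm-da-mu1} with $\mu_0=0$. Hence the determinant computation underlying \eqref{eqn-D} forces $(b,\mu_0,\mu_1)$ to be one of the three triples of \eqref{equa-da-0-b}, and within each triple the generic instances of \eqref{equa-3equa0} together with \eqref{equa-w0} at $k=0$ determine $\ww^0_{k,m}$ for $m\ne0$ up to an overall scale and, only in the triple $(b,0,b)$, a single additive constant $c'$. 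These are exactly the shapes appearing in Lemma~\ref{lemm-da-mu1}(1)--(3).

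The decisive new step is \eqref{equa-w0} specialized to $m=-i$, where the target weight $i+m$ equals $0$: it reads $(a+k+bi)\ww^0_{i+k,-i}=\ell^1_{i,k-i}\ww^0_{k,-i}+i(i+\gamma)\ww^0_{k,0}$, bringing in both $\gamma$ and the still-unknown boundary values $\ww^0_{k,0}$. Substituting the generic solution from the previous paragraph, the $\gamma$-free parts cancel and the identity collapses to $c'=(i+\gamma)\ww^0_{k,0}$ in the triple $(b,0,b)$ (and to $0=i(i+\gamma)\ww^0_{k,0}$ in the other two, which carry no free constant). Since the left-hand side is independent of $i$ whereas $(i+\gamma)$ is not, both $c'=0$ and $\ww^0_{k,0}=0$ are forced for every $k$, and the parameter $\gamma$ disappears entirely; in the triple $(b,0,b)$ this also forces $b=1$, for otherwise the surviving solution is $\ww^0\equiv0$, contradicting \eqref{equa-notzero}. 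With $\ww^0_{k,0}=0$ every deformed coefficient $\ell^0_{i,-i}\ww^0_{k,0}$ vanishes, so all remaining relations coincide with those of Lemma~\ref{lemm-da-mu1} and the three cases follow unchanged. The main obstacle is precisely this boundary cancellation: one must verify, separately for each triple in \eqref{equa-da-0-b}, that substituting the generic solution annihilates the $i$-dependent $\gamma$-free remainder, leaving only the term $i(i+\gamma)\ww^0_{k,0}$, so that the $i$-independence argument applies (choosing $i\notin\{0,-\gamma\}$, and reading $i+\gamma$ as $1$ when $\gamma=\infty$ by Convention~\ref{con1}).
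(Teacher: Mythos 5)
Your route differs from the paper's: the paper first proves the claim \eqref{MSMcl} that $\ww^0_{0,m_0}\neq0$ for some $m_0\neq0$ (deriving a contradiction from the contrary in each of the three cases of \eqref{equa-da-0-b}) and only then refers back to the proof of Lemma \ref{lemm-da-mu1}, whereas you classify the generic part ($m,m\pm i\neq0$) first and then attack the boundary through \eqref{equa-w0} at $m=-i$. That plan is reasonable and, for $\gamma\neq\infty$, your computation genuinely closes the argument (indeed more explicitly than the paper does). But there is a real gap exactly at the point you dismiss in your final parenthesis: when $\gamma=\infty$, Convention \ref{con1} reads $i+\gamma$ as the constant $1$, so $(i+\gamma)$ is \emph{not} $i$-dependent, and in the triple $(b,0,b)$ your key relation degenerates to $c'=\ww^0_{k,0}$ (and, for $b\neq1$, to $\ww^0_{k,0}=1$), which forces nothing; your claim that this triple ``also forces $b=1$'' fails there as well. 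Since case (d) of Section 5 explicitly includes $V^0=A'_{0,0}=B'(\infty)$, your proof does not cover the stated lemma.

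Moreover, this gap cannot be repaired, because the statement is false at $\gamma=\infty$. Take $V^0=A'_{0,0}$ (so $\ell^0_{i,m}=m$ for all $m$), $V^1$ of type $A'_{a,b}$, and
\[
L_kv^0_m=mv^0_{k+m},\qquad L_kv^1_m=(a+m+bk)v^1_{k+m},\qquad W_kv^0_m=v^1_{k+m},\qquad W_kv^1_m=0;
\]
the bracket $[L_i,W_k]v^0_m=(a+k+bi)W_{i+k}v^0_m$ holds because $(a+k+m+bi)-m=a+k+bi$, so this is an indecomposable $\WW(a,b)$-module of the intermediate series of length $2$ (it is the sub-quotient of $A(0,0)$ from \eqref{W-A-a-b} spanned by $\{v^0_m,v^1_m\}$). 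Here $\mu_1=b$ but $\ww^0_{k,m}\equiv1$ and $W_kv^0_0\neq0$, so none of the three possibilities of Lemma \ref{lemm-da-mu1} occurs. For $b=1$ one even has the one-parameter family $\widetilde B(0,\mu)$ of \eqref{t-B-lambda-mu}, whose $V^0$ is $A'_{0,0}$ and whose $\ww^0_{k,0}=1$; these modules appear in the paper's own concluding list for this subsection, so the lemma as literally stated is inconsistent with that list. To be fair, the paper's own proof suffers from the same defect (its appeal to the proof of Lemma \ref{lemm-da-mu1} is invalid once $v^0_0\neq0$, since \eqref{equa-da-i} acquires extra terms involving $W_0v^0_0$); but a correct write-up must either restrict the lemma to $\gamma\neq\infty$ or add these extra $\gamma=\infty$ modules to its conclusion, and your argument as written does neither.
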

\noindent{\it Proof.~}~We claim\begin{equation}\label{MSMcl}
\ww_{0,m_0}^{0}\neq0\mbox{ \ for some $0\neq m_0\in\Z$}.\end{equation}
Otherwise, we may assume $\ww_{0,0}^0=1$
by \eqref{equa-notzero}. As in \eqref{equa-da-0-b}, we only need to consider three cases $\mu_1=b,b+1,b+2$. First assume $\mu_1=b\neq1$, applying $[L_k,W_0]=(a+bk)W_k$ to $v_{m}^0$, $v_{0}^0$, $v_{-k}^0$ respectively, we have $\ww_{k,m}^0=0$, $\ww_{k,0}^0=1$, $(a+bk)\ww_{k,-k}^0=-(a-k+bk)$ for $m,k\neq0$ with $m\neq -k$. Using this results and applying $[L_i,W_j]=(a+j+bi)W_{i+j}$ to $v_{-i-j}^0$, we would obtain a contradiction. Similarly, we would obtain a contradiction for other cases. Thus we have \eqref{MSMcl}. Then similar to the proof of Lemma \ref{lemm-da-mu1}, we obtain the lemma.\hfill\QED

\begin{lemm}\label{lemm-da-b} If $V^{-1}\neq 0$, then $\mu_{-1}=1-b$ and $\ww_{k,m}^{-1}=\delta_{k+m,0}$.
\end{lemm}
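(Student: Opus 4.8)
The plan is to pin down the parameter $\mu_{-1}$ of the irreducible $Vir$-module $V^{-1}$ together with the coupling coefficients in $W_kv^{-1}_m=\ww^{-1}_{k,m}v^0_{k+m}$. Since $a\notin\Q$, the weights of $V^{-1}$ lie in $-a+\Z\not\subset\Z$, so $V^{-1}$ is irreducible of type $A'_{0,\mu_{-1}}$ with $\ell^{-1}_{k,m}=-a+m+k\mu_{-1}$, while the target $V^0=B'(\gamma)$ carries the degenerate action $L_iv^0_{-i}=-i(i+\gamma)v^0_0$ and $L_iv^0_n=nv^0_{i+n}$ for $n\ne-i$ (cf.~\eqref{B-a}); here $\C v^0_0$ is the trivial socle. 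The whole analysis runs through the commutator identity \eqref{equa-w0} with $j=-1$, and the natural dichotomy is between the \emph{generic} coefficients $\ww^{-1}_{k,m}$ with $k+m\ne0$ (image off the socle) and the \emph{socle} coefficients $\b_k:=\ww^{-1}_{k,-k}$ (image in $\C v^0_0$).

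For the generic coefficients, every instance of \eqref{equa-w0} all of whose terms avoid $v^0_0$ coincides with the corresponding relation in the case $V^0=A''_{0,0}$ already solved in Lemma \ref{lemm-da-mu-1} (equivalently, one passes to the quotient $\bar V=V/\C v^0_0$, in which $\bar V^0=A''_{0,0}$). Hence the generic part either realises one of the three forms of that lemma — which, after re-adjoining the socle coefficients, reconstructs $B_2(\gamma),B_3(\gamma)$ and, for $b=1$, $\widetilde B_3(\gamma)$ — or vanishes identically. The present lemma records the remaining, genuinely new situation in which the generic part is zero, a configuration created precisely by $v^0_0\ne0$ and impossible when $V^0=A''_{0,0}$.

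In that situation $W_kv^{-1}_m=\b_k\delta_{k+m,0}v^0_0$, and since $L_iv^0_0=0$ one checks that \eqref{equa-w0} is vacuous unless $i+k+m=0$, where it collapses to the single scalar family $(a+(1-b)k-bm)\,\b_{-m}=(a-m+(k+m)\mu_{-1})\,\b_k$. Specialising to $m=0$ and to $k=0$ and eliminating $\b_k$ forces $\mu_{-1}(1-\mu_{-1})=b(1-b)$, i.e.\ $\mu_{-1}\in\{b,\,1-b\}$; feeding each root back into the general equation and using $a\notin\Q$ discards $\mu_{-1}=b$ (the resulting identity in $a$ reduces to $b(2b-1)(b-1)=0$, so it survives only for $b\in\{0,\tfrac12,1\}$, which I treat by hand: $b=0$ is excluded, $b=\tfrac12$ makes the two roots coincide, and $b=1$ is carried by the isomorphism \eqref{Imospp} into $\widetilde B_1(\gamma)$). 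With $\mu_{-1}=1-b$ the scalar family forces $\b_k$ to be a constant, which is nonzero by indecomposability \eqref{equa-notzero} and which I normalise to $1$, yielding $\ww^{-1}_{k,m}=\delta_{k+m,0}$, i.e.\ $B_1(\gamma)$.

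The main obstacle is organising the two branches cleanly — in particular justifying that the generic coefficients are governed by exactly the $A''_{0,0}$ system, so that Lemma \ref{lemm-da-mu-1} applies, and then isolating the socle-only branch as the one producing the stated module. The secondary difficulty is purely algebraic: discarding the spurious root $\mu_{-1}=b$ requires the full scalar family rather than its $m=0,\,k=0$ specialisations, together with $a\notin\Q$ and a direct check of the degenerate values of $b$.
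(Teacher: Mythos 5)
Your socle-only branch is correct, and in fact sharper than what the paper writes down: with $\beta_k:=\ww^{-1}_{k,-k}$, equation \eqref{equa-w0} does collapse (because $L_iv^0_0=0$) to the family $(a+(1-b)k-bm)\beta_{-m}=(a-m+(k+m)\mu_{-1})\beta_k$ supported on $i+k+m=0$; eliminating $\beta$ via the $m=0$ and $k=0$ specialisations does give $\mu_{-1}(1-\mu_{-1})=b(1-b)$; and the obstruction $b(b-1)(2b-1)km(k+m)=0$ that kills the root $\mu_{-1}=b$ is exactly right. The paper compresses all of this into the phrase ``as the proof above''.

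The divergence — and, measured against the paper's own proof, the gap — is in the other branch. The paper does \emph{not} concede that the generic-nonzero branch survives: its proof claims that after solving $\ww^{-1}_{0,m}$ as in Lemma \ref{lemm-da-mu-1}, then solving all $\ww^{-1}_{k,m}$, substitution back into \eqref{equa-w0} yields ``a contradiction for all the cases in Lemma \ref{lemm-da-mu-1}''; that elimination is precisely what entitles the lemma to its unconditional conclusion. You replace it by the opposite assertion (that the branch ``reconstructs'' $B_2(\gamma)$, $B_3(\gamma)$, $\widetilde{B}_3(\gamma)$) and then reinterpret the lemma as recording only the socle-only branch — so what you prove is strictly weaker than the statement under review. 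Your description of that branch is also partly wrong on its own terms: $B_2(\gamma)$ does not live there at all, since by \eqref{W-A1-r-a-b} its $V^{-1}\to V^0$ coupling is $\delta_{m+k,0}$ with $\mu_{-1}=1-b$ (the Lemma \ref{lemm-da-mu1}-type coupling inside $B_2(\gamma)$ is the one $V^0\to V^1$, governed by Lemma \ref{lemm-+aa}); and ``re-adjoining the socle coefficients'' is a genuine consistency problem you never check: for Lemma \ref{lemm-da-mu-1}(2), $\mu_{-1}=-1-b$, the relations \eqref{equa-w0} at $i+k+m=0$ admit \emph{no} consistent socle coefficients over $B'(\gamma)$ unless $b=-1$, exactly the kind of contradiction the paper invokes.

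That said, your instinct about case (1) exposes a real defect on the paper's side. For $\mu_{-1}=-b$ (generic coupling normalised to $1$) the only constraint is $(a+k+bi)\beta_{i+k}=-i(i+\gamma)+(a+k+i+bi)\beta_k$, which has the unique solution $\beta_k=(b\gamma-a-k)/b$; the resulting module is the dual of $A_3(\gamma)$ of \eqref{W-B-r-a-b-dual} (incidentally, this shows the printed $B_3(\gamma)$ in \eqref{W-B-r-a-b} needs generic coefficient $b$ rather than $1$), and $\widetilde{B}_3(\gamma)$ of \eqref{t-B-3-gamm} likewise realises case (3) when $b=1$. So no contradiction is available there, and the lemma as stated is incompatible with the paper's own concluding list ``$B_i(\gamma),\widetilde{B}_i(\gamma)$, $i=1,2,3$''; your extra $b=1$ socle solution $\mu_{-1}=1$, $\beta_k\propto 1/(a+k)$ — which is $\widetilde{B}_1(\gamma)$ of \eqref{t-B-l-gamma} — is yet another case the stated conclusion misses. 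A defensible write-up must therefore either add a hypothesis (e.g.\ $V^1\ne0$) or enlarge the conclusion, and must verify the case-by-case consistency explicitly, rather than fold the discrepancy into a silent reinterpretation of what the lemma ``records''.
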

\begin{proof} As in \eqref{equa-da-b-0}, we need to consider three cases $\mu_{-1}=-b,-1-b,1-b$. First assume $\ww_{0,m_0}^{-1}\neq0$ for some $0\neq m_0\in\Z$. Similar to the proof of Lemma \ref{lemm-da-mu-1}, we can solve $\ww_{0,m}^{-1}$ for $m\neq 0.$ Then applying $[L_k,W_0]=(a+bk)W_k$ to $v_{m}^{-1}$, $v_{0}^{-1}$, $v_{-k}^{-1}$ respectively with $m\neq -k,0$, we can solve $\ww_{k,m}^{-1}$. Then using \eqref{equa-w0} with $m\neq-i-j,-i,-j,0$ and $m=-i-j,-i,-j,0$ respectively, we would obtain a contradiction for all the cases in Lemma \ref{lemm-da-mu-1}. Thus
 $\ww_{0,m}^{-1}=0$ for $m\neq0$, and we can assume $\ww_{0,0}^0=1$
by \eqref{equa-notzero}. Then as the proof above, we have the lemma.
\end{proof}

Now similar to the proofs of Lemmas \ref{AM-lll}, \ref{lemm-da}, we obtain that $V$ is a sub-quotient module of $B(0,0)$, $\widetilde{B}(0,\mu)$, $B_i(\gamma)$, $\widetilde{B}_i(\gamma)$, $i=1,2,3.$

\subsection{The case $V^0=A'(\gamma)$ or $A'_{0,1}$}
Dually to the previous, we obtain that $V$ is a sub-quotient module of $B(0,1)$, $\widetilde{B}(-a,\mu)$, $A_i(\gamma)$, $\widetilde{A}_i(\gamma)$, $i=1,2,3.$

Now we have determined all possible structures of $V$ under the assumption in this section.
%
\section{Modules of the intermediate series for case $a\in\Q$}\setcounter{case}{0}
First we suppose $a\notin\Z$. Then
we can write \begin{equation}\label{p,q}a=\frac{q}{p}\mbox{ \ with }
p,q\in\Z\bs\{0\},\,p\ge2\mbox{ and $p,q$ are coprime},\end{equation}
 and we can assume $b\ne0$ since $\WW(a,0)\cong\WW(a,1)$.
We can suppose \begin{equation}\label{MAMAss}
V=\bigoplus_{j=N_0}^{N_0+N} V^j,\mbox{ \ and $V^j=0$ if $j<N_0$ or $j>N_0+N$},
\end{equation}
for some $N_0,N\in\Z$ with $1\le N\le p-1$.
%
%
%
%
Note that for any fix $N_0,N\in\Z$ with $1\le N\le p-1$, all modules defined in \eqref{W-A-a-b}--\eqref{t-B-2-gamm}
for the case $a\notin\Q$
remain to be $\WW(a,b)$-modules for the case $a\in\Q$ under the additional conditions:
\begin{equation}\label{adddd}N_0\le j\le N_0+N,\mbox{ \ and  \ } W_kv_m^{N_0+N}=0\mbox{ \ for all }k,m\in\Z.\end{equation}
We use the same symbols to denote these modules. In addition, we have another type of modules, denoted by $\overline{A}(\l,\mu)$, with basis $\{v_m^j\,|\,j,m\in\Z,\,0\leq j\leq p-1\}$ and actions:
\begin{eqnarray}\label{W-qA-a-b1}
\!\!\!\!\!\!\!\!\!\!\!\!\!\!\overline{A}(\l,\mu)&\!\!\!\!\!\!\!\!&={\rm span}\{v_m^j\,|\,j,m\in\Z,\,0\leq j\leq p-1\}: \nonumber\\
&\!\!\!\!\!\!\!\!&L_kv^j_m=(\l+aj+m+\mu k)v^j_{k+m}\ \ (0\leq j\leq p-1),\nonumber\\
&\!\!\!\!\!\!\!\!&W_kv^j_m=\frac{1}{a+k}v^{j+1}_{k+m}\ \ (0\leq j< p-1),\ \ \ W_kv^{p-1}_m=\frac{1}{a+k}v^{0}_{k+m+pa}.
\end{eqnarray}

Now we give a proof of Theorem \ref{theo-2-1} for $a\in\Q\bs\Z$. First assume $W_k V^{N_0+N}=0$ for all $k\in\Z$.
Similar to the proof of Theorem \ref{theo-2-1} for the case $a\notin\Q$, we have the result.
Now assume $W_{k_0}V^{N_0+N}\neq0$ for some $k_0\in\Z$ (by shifting the index, we may assume $N_0=0$). This means $N=p-1$ and  $W_{k_0}V^{p-1}\subset V^{0}$. If we re-denote $V^p=V^0$, then we obtain \begin{equation}\label{mu-p=0}
\mu_{p}=\mu_{0}.\end{equation}
Consider all possibilities, we find out that only in the case $\widetilde{A}(\l,\mu)$, \eqref{mu-p=0} can happen, thus we obtain an extra module $\overline{A}(\l,\mu)$ defined in \eqref{W-qA-a-b1}.
This completes the proof of the theorem in this case.
%

Finally we prove Theorem \ref{theo-2-1} for the case $a\in\Z$.
In this case we suppose $a=0$ since $\WW(a,b)\simeq\WW(0,b)$, and
suppose $b\ne0$ as when $b=0$, the algebra $\WW(0,0)$ is simply the
twisted Heisenbeg-Virasoro algebra, whose indecomposable modules of the intermediate series were considered in
\cite{LuZ}.

The special case $a=0,b=-1$ has also been solved in \cite{LiuZ}. Thus, we  assume $b\neq -1$.
First we suppose $V=V^0={\rm span}\{v_m\,|\,m\in\Z\}$ is a $Vir$-module of type $A'_{\l,\mu}$, and so $L_kv_m=(\l+m+\mu k)v_{m+k}$ and $W_kv_m=\ww_{k,m}v_{k+m}$.
Applying $
[\L_k,\W_0]=bk\W_{k},$ $[\W_k,\W_0]=0$
to $v_m$, comparing the coefficients of $v_{m+k}$, we have
\begin{eqnarray}
&&(\l+m+\mu k)\ww_{0,m}-(\l+m+\mu k)\ww_{0,m+k}=bk\ww_{k,m},\label{azlw}\\
&&\ww_{k,m}\ww_{0,m}=\ww_{0,m+k}\ww_{k,m}.\label{azww}
\end{eqnarray}
If $\ww_{k_0,m_0}\neq 0$ for some $k_0,m_0$ with $k_0\ne0$, then  \eqref{azww} gives $\ww_{0,m_0}=\ww_{0,m_0+k_0}$, and \eqref{azlw} gives $\ww_{k_0,m_0}=0$, which is a contradiction. Thus, $\ww_{k,m}=0$ for all $k\ne0$.
If $b\neq 1,$ by applying $[\L_1,\W_{-1}]=(b-1)\W_0$ to $v_m,$  we obtain
$
\ww_{0,m}=(\l+m-1+\mu)\ww_{-1,m}-(\l+m+\mu)\ww_{-1,m+1}=0.
$
Thus, $W_k$'s act trivially on $V$.
Therefore, we suppose $b=1$. Then \eqref{azlw} gives
\begin{eqnarray*}
(\l+m+\mu k)(\ww_{0,m}-\ww_{0,m+k})=0, \ \ \
(\l+m+k-\mu k)(\ww_{0,m}-\ww_{0,m+k})=0,
\end{eqnarray*}
where the second equation follows from the first by replacing $m,$ $k$ by $m+k,$ $-k$ respectively.
Thus
\begin{eqnarray}\label{zf1}
(2\l+2m+k)(\ww_{0,m}-\ww_{0,m+k})=0.
\end{eqnarray}
Letting $m=0$ in \eqref{zf1} gives $(2\l+k)(\ww_{0,0}-\ww_{0,k})=0.$
If $2\l\notin\Z,$ then $\ww_{0,k}=\ww_{0,0}$ for all $k\in\Z$.
If $2\l\in\Z,$ then $\ww_{0,k}=\ww_{0,0}$ for $k\neq -2\l.$ Letting $m=-2\l$ in \eqref{zf1} gives $(k-2\l)(\ww_{0,-2\l}-\ww_{0,k-2\l})=0$ for all $k\in\Z.$ Thus $\ww_{0,-2\l}=\ww_{0,k-2\l}=\ww_{0,0}.$
Hence, we obtain module $\overline{A}(\l,\,\mu,\,c)$.
Now, the proof of Theorem \ref{theo-2-1} is completed.
%
\section{Irreducible modules of the intermediate series}\setcounter{case}{0}
Now we give a proof of Theorem \ref{theo-irr}.
First we prove Theorem \ref{theo-irr}(3). Thus suppose $a\notin\Q$.
Assume that the set $\{W_k\,|\,k\in\Z\}$ acts nontrivially  on $V$, i.e., there exists some nonzero weight vector
$v_\l\in V_\l$ with weight $\l$ such that $W_kv_\l\ne0$ for some $k$.
Note that we have \eqref{wei-set}.
Denote $V'=\sum_{j\ge1,\,m\in\Z}V_{\l+ja+m}$. It is straightforward to verify that $v_\l\notin V',\,0\ne W_kv_\l\in V'$, and $V'$ is a proper $\WW(a,b)$-submodule of $V$, a contradiciton with the irreducibility of $V$. This proves
Theorem \ref{theo-irr}(3).

Theorem \ref{theo-irr}(2) follows immediately from Theorem \ref{theo-2-1}. It remains to prove
Theorem \ref{theo-irr}(1). Thus assume $V$ is an irreducible Harish-Chandra $\WW(a,b)$-module without highest
and lowest weights.
We can assume $a\in\Q$ which is written as in \eqref{p,q}, otherwise the result follows from Theorem \ref{theo-irr}(3) and Mathieu's Theorem (\cite{O}, Theorem 1).
We claim that for any $m\ne-1,0$ and $\l\in\C$, the linear map
\begin{equation}\label{Inje}
\psi_m:=L_m|_{V_{\l}}\oplus L_{m+1}|_{V_{\l}}\oplus W_m|_{V_{\l}}\oplus W_{m+1}|_{V_{\l}}:\
V_{\l}\to V_{\l+m}\oplus V_{\l+m+1}\oplus V_{\l+a+m}\oplus V_{\l+a+m+1}
\end{equation}
is injective.
\def\ell{k}If not, then there exists some $v_0\in V_\l$ such that $L_mv_0=L_{m+1}v_0=W_{m}v_0=W_{m+1}v_0=0$.
Without loss of generality, we suppose $m>0$. Note that when $\ell\gg0$, we can always express $\ell$ as $\ell=x m+y(m+1)$ for some $x,y\in\Z_+\bs\{0\}$, such that $L_\ell,W_\ell$ can be generated by $L_m,L_{m+1},W_m,W_{m+1}$
(note that when $\ell\gg0$, we either have $W_\ell=\frac1{a+(1-b)m+b\ell}[L_{\ell-m},W_m]$ with ${a+(1-b)m+b\ell}\ne0$ or
$W_\ell=\frac1{a+(1-b)(m+1)+b\ell}[L_{\ell-m-1},W_{m+1}]$ with ${a+(1-b)(m+1)+b\ell}\ne0$). Thus there exists some $K>0$ such that $L_\ell v_0=W_{\ell}v_0=0$ for all $\ell>K$.
Then as the proof of \cite[Proposition 2.1]{S3}, we obtain a highest weight, a contraction with the assumption.
This proves the claim.

Now fix a weight $\l_0\in P(V)$. We have $P(V)\subset \{\l_0+\frac{i}{p}+m\,|\,0\le i\le p-1,\,m\in\Z\}$ by \eqref{wei-set}. Denote $N=\sum_{i=0}^{p-1}\sum_{j=-1}^1{\rm dim\,}V_{\l_0+\frac{i}{p}+j}<\infty$. Then for any $\l=\l_0+\frac{i}{p}+m\in P(V)$, we always have ${\rm dim\,}V_\l\le N$ (which is obvious if $m=0,-1$, and which follows from the injectivity of the map $\psi_{-m}$ in \eqref{Inje} if $m\ne0,1$). Thus $V$ is uniformly bounded,
and the proof of Theorem \ref{theo-irr} is completed.


%
%
%
%

\end{CJK*}
\end{document}